\pgfplotsset{compat=1.18} 
\pgfplotsset{compat=1.18}
\newcommand{\C}{\mathbb{C}}
\newcommand{\bv}{\mathbf{v}}
\newcommand{\bw}{\mathbf{w}}
\newcommand{\bz}{\mathbf{z}}
\newcommand{\tr}{\operatorname{tr}}
\newtheorem{thm}{Theorem}[section]
\newtheorem{lem}[thm]{Lemma}
\newtheorem{prop}[thm]{Proposition}
\newtheorem{cor}[thm]{Corollary}
\newtheorem{claim}{Claim}
\newtheorem{prob}[thm]{Problem}
\newtheorem{conj}[thm]{Conjecture}
\theoremstyle{definition}
\newtheorem{remark}[thm]{Remark}
\newtheorem{defn}[thm]{Definition}
\numberwithin{equation}{section}
\newcommand{\diag}{\operatorname{diag}}
\begin{document}

\title[Schoenberg inequalities and de Bruin--Sharma problem]
{Sharp Schoenberg type inequalities and the de Bruin--Sharma problem}

\author[Q.~Tang]{Quanyu Tang}
\author[T.~Zhang]{Teng Zhang}

\address{School of Mathematics and Statistics, Xi'an Jiaotong University, Xi'an 710049, P. R. China}
\email{tangquanyu@stu.xjtu.edu.cn (corresponding); tang\_quanyu@163.com}
\address{School of Mathematics and Statistics, Xi'an Jiaotong University, Xi'an 710049, P. R. China}
\email{teng.zhang@stu.xjtu.edu.cn}

\subjclass[2020]{30C10, 47A57, 15A42}







\keywords{Schoenberg type inequalities; de Bruin--Sharma problem; zeros of polynomials; critical points; interpolation; dual cases}

\begin{abstract}
In this paper, we confirm two conjectures proposed by Georgiev, G\'{o}mez-Serrano, Tao, and Wagner~\cite{GGTW25} on Schoenberg
type inequalities of order~$4$, thereby providing a complete solution to the de Bruin--Sharma problem. We further develop a new interpolation framework to study Schoenberg type inequalities and, in particular, give a new proof of Pereira's result. Motivated by Sendov's conjecture, we then derive sharp Schoenberg type inequalities of orders $-1$ and $-2m$ (with $m \in \mathbb{N}$), as well as non-sharp inequalities valid for all negative orders $p \le -1$. Finally, we discuss a dual counterpart of the Schoenberg type inequalities of order~$-2$.
\end{abstract}

\maketitle
\tableofcontents
\section{Introduction}
\subsection{The de Bruin--Sharma problem}
The study of relations between the zeros and critical points of complex polynomials is a classical theme in the geometry of polynomials, with origins tracing back to the works of Gauss and Lucas \cite{BE95}, and modern developments motivated by conjectures of Sendov, Schoenberg, and others \cite{Mar66, QS02, Tao22}. 
Let 
\[
p(z) = z^n + a_1 z^{n-1} + \cdots + a_{n-1} z + a_n
\]
be a complex polynomial of degree $n \ge 2$, with zeros $z_1, \dots, z_n$ and \emph{critical points} $w_1, \dots, w_{n-1}$, i.e., the zeros of $p'(z)$.

In 1986, Schoenberg \cite{Sch86} conjectured that if the centroid of the zeros lies at the origin, i.e.\ $\sum_{j=1}^n z_j = 0$, then the quadratic inequality
\begin{equation}\label{eq:schoenberg}
	\sum_{k=1}^{n-1} |w_k|^2 \le \frac{n-2}{n} \sum_{j=1}^n |z_j|^2
\end{equation}
holds, with equality if and only if all zeros are collinear in the complex plane. This was later confirmed in an equivalent, more general form~\cite{BIS99, Kat99, Mal04, Per03}, and has since inspired a series of higher-order generalizations, collectively referred to as \emph{Schoenberg type inequalities}. As an interesting application of \eqref{eq:schoenberg}, Zhang~\cite{Zha25} presented an improvement of Schmeisser's conjecture \cite{Sch77} for the barycenter case.

For the quartic case, that is, when the order is $4$, de~Bruin and Sharma~\cite{dS99} conjectured that if $\sum_{j=1}^{n} z_j = 0$, then 
\begin{equation}\label{eq:dbs}
	\sum_{k=1}^{n-1} |w_k|^4 \le \frac{n-4}{n} \sum_{j=1}^{n} |z_j|^4 + \frac{2}{n^2} \left( \sum_{j=1}^{n} |z_j|^2 \right)^2.
\end{equation}
This conjecture was first proved by Cheung and Ng~\cite{CN06}, and later sharpened by Kushel and Tyaglov~\cite{KT16}, who established
\begin{equation}\label{eq:kt}
	\sum_{k=1}^{n-1} |w_k|^4 \le \frac{n-4}{n} \sum_{j=1}^{n} |z_j|^4 + \frac{1}{n^2} \left( \sum_{j=1}^{n} |z_j|^2 \right)^2 + \frac{1}{n^2} \left| \sum_{j=1}^{n} z_j^2 \right|^2.
\end{equation}

As a variant of \eqref{eq:dbs} in which the parameters to be optimized range over a two-dimensional space, we recall the 
\href{https://google-deepmind.github.io/alphaevolve_repository_of_problems/problems/24.html}{de~Bruin--Sharma problem}, 
which appears as \cite[p. 32, Problem~6.24]{GGTW25}.


\begin{prob}[de Bruin--Sharma]\label{prob:dbs_12345}
For $n \ge 4$, let $\Omega(n)$ be the set of pairs $(\alpha,\beta) \in \mathbb{R}_{\ge 0}^2$ such that, whenever $p(z)$ is a degree $n$ polynomial with zeros $z_1,\dots,z_n$ satisfying $\sum_{j=1}^n z_j = 0$, and $w_1,\dots,w_{n-1}$ are the critical points (zeros of $p'(z)$), that
\begin{equation}\label{eq:dbs1212}
  \sum_{k=1}^{n-1} |w_k|^4
  \le
  \alpha \sum_{j=1}^n |z_j|^4
  +
  \beta \left(\sum_{j=1}^n |z_j|^2\right)^{2}.
\end{equation}
What is $\Omega(n)$?
\end{prob}

In the discussion of Problem~\ref{prob:dbs_12345} in \cite[p.~33]{GGTW25}, 
Georgiev, G\'{o}mez-Serrano, Tao, and Wagner proposed the following two conjectures.

\begin{conj}[\cite{GGTW25}]\label{conj1:p=4}
	Let \( z_1, z_2, \ldots, z_n \) be the zeros of a complex polynomial \( p(z) \) of degree $n$, and let \( w_1, w_2, \ldots, w_{n-1} \) be its critical points. 
	Assume further that $\sum_{j=1}^n z_j=0$. 
	Then for every odd integer $n>4$, we have
	\[
	\sum_{k=1}^{n-1} |w_k|^4 \le \frac{n^3 - 2n^2 + 3n - 14}{n(n^2 + 3)} 
	\sum_{j=1}^n |z_j|^4 .
	\]
\end{conj}

\begin{conj}[\cite{GGTW25}]\label{conj2:p=4}
	Let \( z_1, z_2, \ldots, z_n \) be the zeros of a complex polynomial \( p(z) \) of degree $n$, and let \( w_1, w_2, \ldots, w_{n-1} \) be its critical points. Assume further that $\sum_{j=1}^n z_j=0$. Then for all $n \geq 4$, we have
	\[
	\sum_{k=1}^{n-1} |w_k|^4 \le \frac{(n-2)^4+n-2}{n^2(n-1)^2} \left(\sum_{j=1}^n |z_j|^2\right)^2 .
	\]
\end{conj}

In this paper, we confirm both Conjecture~\ref{conj1:p=4} and Conjecture~\ref{conj2:p=4}, thereby providing a complete solution to Problem~\ref{prob:dbs_12345}; see Theorem~\ref{thm:solution of de Bruin--Sharma problem} below. In fact, we show that these two conjectures follow by combining the quartic estimates of Cheung--Ng \eqref{eq:dbs} and Kushel--Tyaglov \eqref{eq:kt}
with new sharp inequalities for certain zero configuration.

\begin{thm}\label{thm:solution of de Bruin--Sharma problem}
	Let $n\ge 4$, and introduce the four anchor points in the plane:
	\[
	\left(\alpha_{\mathrm{CN}}(n),\beta_{\mathrm{CN}}(n)\right)
	=\left(\frac{n-4}{n},\frac{2}{n^2}\right),\qquad
    \left(0,\beta_{\mathrm{all}}(n)\right)
	=\left(0,\frac{(n-2)^4+n-2}{n^2(n-1)^2}\right),
	\]
	\[
	\left(\alpha_{\mathrm{even}}(n),0\right)
	=\left(\frac{n-2}{n}, 0\right),\qquad
	\left(\alpha_{\mathrm{odd}}(n),0\right)
	=\left(\frac{n^3-2n^2+3n-14}{n(n^2+3)}, 0\right).
	\]
	Let $\operatorname{co}(\cdot)$ denote the convex hull. Define the two closed, convex "upper-right closures" by
	\[
	\widetilde{\Omega}_{\mathrm{odd}}(n)
	:=\Big\{(\alpha,\beta)\in\mathbb{R}_{\ge0}^2: \exists (\alpha',\beta')\in
	\operatorname{co}\big\{(\alpha_{\mathrm{CN}},\beta_{\mathrm{CN}}),
	(\alpha_{\mathrm{odd}},0),(0,\beta_{\mathrm{all}})\big\}
	\text{ with }\alpha\ge \alpha', \beta\ge \beta'\Big\},
	\]
	\[
	\widetilde{\Omega}_{\mathrm{even}}(n)
	:=\Big\{(\alpha,\beta)\in\mathbb{R}_{\ge0}^2: \exists (\alpha',\beta')\in
	\operatorname{co} \big\{(\alpha_{\mathrm{CN}},\beta_{\mathrm{CN}}),
	(\alpha_{\mathrm{even}},0),(0,\beta_{\mathrm{all}})\big\}
	\text{ with }\alpha\ge \alpha', \beta\ge \beta'\Big\}.
	\]
	Then the set $\Omega(n)$ defined in Problem~\ref{prob:dbs_12345} satisfies
	\[
	\Omega(n)=
	\begin{cases}
		\widetilde{\Omega}_{\mathrm{odd}}(n), & \text{if $n>4$ is odd},\\
		\widetilde{\Omega}_{\mathrm{even}}(n), & \text{if $n\ge 4$ is even}.
	\end{cases}
	\]
\end{thm}
Theorem \ref{thm:solution of de Bruin--Sharma problem} for $n=7$ see Figure \ref{Fig1}.  
\begin{figure}[ht] 
	\centering
	\pgfmathsetmacro{\aOdd}{9/13}      
	\pgfmathsetmacro{\aCN}{3/7}        
	\pgfmathsetmacro{\bCN}{2/49}       
	\pgfmathsetmacro{\bAll}{5/14}      
	\pgfmathsetmacro{\Kodd}{84/13}     
	\pgfmathsetmacro{\Call}{31/42}     
	
	\begin{tikzpicture}
		\begin{axis}[
			width=11.5cm,height=7.8cm,
			axis lines=left,
			xlabel={$\alpha$}, ylabel={$\beta$},
			xmin=0, xmax=0.75,
			ymin=0, ymax=0.40,
			xtick={0,0.2,0.4,0.6},
			ytick={0,0.1,0.2,0.3,0.4},
			grid=both,
			grid style={line width=.1pt, draw=gray!30},
			major grid style={line width=.2pt, draw=gray!45},
			clip=false
			]
			
			\coordinate (Aodd) at (axis cs:\aOdd,0);
			\coordinate (ACN)  at (axis cs:\aCN,\bCN);
			\coordinate (Ball) at (axis cs:0,\bAll);
			
			\path[fill=gray!18]
			(Aodd) --
			(axis cs:0.75,0) --
			(axis cs:0.75,0.40) --
			(axis cs:0,0.40) --
			(Ball) -- (ACN) -- cycle;
			
			\addplot[very thick,black] coordinates {(\aOdd,0) (\aCN,\bCN)};
			\addplot[very thick,black] coordinates {(\aCN,\bCN) (0,\bAll)};
			
			\addplot[only marks,mark=*,mark size=2.2pt,
			mark options={fill=red!80!black,draw=red!80!black}]
			coordinates {(\aOdd,0)};
			\addplot[only marks,mark=*,mark size=2.2pt,
			mark options={fill=blue!80!black,draw=blue!80!black}]
			coordinates {(\aCN,\bCN)};
			\addplot[only marks,mark=*,mark size=2.2pt,
			mark options={fill=green!60!black,draw=green!60!black}]
			coordinates {(0,\bAll)};
			
			\node[anchor=south west, font=\scriptsize, xshift=2pt, yshift=2pt] at (Aodd)
			{$\big(\alpha_{\mathrm{odd}}(7),0\big)$};
			\node[anchor=south west, font=\scriptsize] at (ACN)
			{$(\alpha_{\mathrm{CN}}(7),\beta_{\mathrm{CN}}(7))$};
			\node[anchor=west, font=\scriptsize] at (Ball)
			{$(0,\beta_{\mathrm{all}}(7))$};
			
			\node[anchor=south east, font=\footnotesize]
			at (axis cs:0.75,0.40) {$\widetilde{\Omega}_{\mathrm{odd}}(7)$ (shaded)};
			
		\end{axis}
	\end{tikzpicture}
	\caption{The region $\widetilde{\Omega}_{\mathrm{odd}}(7)$ in the $(\alpha,\beta)$-plane:
		the feasible set is the region above and to the right of
		the convex hull of the three points 
		$(\alpha_{\mathrm{odd}}(7),0)$, $(\alpha_{\mathrm{CN}}(7),\beta_{\mathrm{CN}}(7))$, 
		and $(0,\beta_{\mathrm{all}}(7))$.}\label{Fig1}
\end{figure}

\subsection{Schoenberg type inequalities of general orders} Apart from the quadratic and quartic orders, sharp results for other positive integer exponents are virtually nonexistent, particularly for non-even orders. Kushel and Tyaglov~\cite[Section~5]{KT16} highlighted the technical difficulty of expressing certain matrix-trace expressions in terms of the polynomial's zeros for even orders $2m \ge 6$ as well as for odd orders $2m+1$, and in particular they raised the following problem:
\begin{prob}[\cite{KT16}]\label{prob:kt}
Is it possible to estimate $\sum_{k=1}^{n-1} |w_k|^m$ for $m \in \mathbb{N}$\,?
\end{prob}

Tang~\cite{Tang25} resolved the next even-order case $m=6$ (see Theorem~\ref{thm:inequality_of_order_6_1} in Appendix~\ref{sec:appendix_case_p_6}) and established the first odd-order case $m=1$ (see~\cite[Theorem~3.4]{Tang25}). In particular, in the case $m=1$ it was shown that  
\begin{equation}\label{eq:p=1}
	\sum_{k=1}^{n-1} |w_k|
	\leq \sqrt{\frac{n-2}{n}} \sum_{j=1}^n |z_j|,
\end{equation}
whenever $\sum_{j=1}^n z_j = 0$. Nevertheless, no general method has been available to extend these sharp results to higher integer orders, let alone to non-integer orders.

In~\cite{LXZ21}, Lin, Xie, and Zhang proved an inequality for general orders \(p \ge 2\) and explicitly raised the question of whether further improvements are possible, since \cite[Theorem~3.2]{LXZ21} gives a non-sharp bound even when \(p = 4\). In~\cite[Proposition~5.2]{Tang25}, Tang refined this result, although the resulting bound remains non-sharp for all \(p \ge 4\).


It is worth noting that, for every real number $p \ge 1$, Pereira~\cite[Corollary~5.5]{Per03} established a sharp inequality relating the $\ell^p$-norms of the zeros and the critical points:
\begin{equation}\label{eq:Pereira1}
  \sum_{k=1}^{n-1} |w_k|^p \le \frac{n-1}{n}\sum_{j=1}^n |z_j|^p,    
\end{equation}
which holds without the assumption that the centroid of the zeros is at the origin. 

Pereira's proof relies on a majorization relation between the zeros of a polynomial and its critical points.
In this paper we develop a new approach to \eqref{eq:Pereira1} based on complex interpolation for compatible Banach couples, applied simultaneously to $\ell^p$ spaces and Schatten $p$-classes $S_p$.
This framework allows us to pass directly from the endpoint cases $p = 1$, $p = 2$, and $p = \infty$ to all intermediate exponents, and along the way we obtain a partial refinement of \eqref{eq:Pereira1} under the additional assumption $\sum_{j=1}^n z_j = 0$. In addition, we provide a new proof of the Schoenberg type inequality of order~$1$, namely~\eqref{eq:p=1}.

In the course of our study of Schoenberg type inequalities of general orders, we also propose the following two conjectures, which represent two central puzzles arising from Problem~\ref{prob:kt}.

\begin{conj}\label{conj: sum z_j=0}
	Let \( n \ge 4 \) be an integer and let \( p \ge 2 \) be a real number. Let \( z_1, z_2, \ldots, z_n \) be the zeros of a complex polynomial \( p(z) \) of degree $n$, and let \( w_1, w_2, \ldots, w_{n-1} \) be its critical points. 
	Assume further that $\sum_{j=1}^n z_j=0$. Then
	\[
	\sum_{k=1}^{n-1} |w_k|^p \le \frac{n-2}{n} \sum_{j=1}^n |z_j|^p .
	\]Moreover, the equality can be attained by \( p(z) = (z-1)^{\frac{n}{2}}(z+1)^{\frac{n}{2}} \) with \( n \ge 4 \) even.
\end{conj}
\begin{conj}\label{conj:two-point-odd12}
	Let \( n > 4 \) be an odd integer and let \( p \ge 2 \) be a real number. Let \( z_1, z_2, \ldots, z_n \) be the zeros of a complex polynomial \( p(z) \) of degree $n$, and let \( w_1, w_2, \ldots, w_{n-1} \) be its critical points. 
	Assume further that $\sum_{j=1}^n z_j=0$. Then
	\[
	\sum_{k=1}^{n-1} |w_k|^p \le \frac{(n-3)(n+1)^{p}+(n-1)^{p+1}+2^{p+1}}
	{(n-1)(n+1)^{p}+(n+1)(n-1)^{p}} \sum_{j=1}^n |z_j|^p .
	\]Moreover, equality is attained for the polynomial
\[
p(z) = \left(z-(n+1)\right)^{\frac{n-1}{2}} \left(z+(n-1)\right)^{\frac{n+1}{2}}.
\]
\end{conj}

\begin{remark}\label{remark:two_conj_solved_case_11}
Clearly, when \( p = 2 \), Conjectures~\ref{conj: sum z_j=0} and~\ref{conj:two-point-odd12} both reduce to Schoenberg's inequality~\eqref{eq:schoenberg}. When \( p = 4 \), Conjecture~\ref{conj: sum z_j=0} can be deduced directly from~\eqref{eq:dbs} together with the Cauchy--Schwarz inequality, while Conjecture~\ref{conj:two-point-odd12} reduces to Conjecture~\ref{conj1:p=4}. When \( p = 6 \), Conjecture~\ref{conj: sum z_j=0} is also valid; this is proved in Corollary~\ref{cor:inequality_of_order_6_11} in Appendix~\ref{sec:appendix_case_p_6}.
\end{remark}

\subsection{Normalized polynomials and related conjectures}

A complex polynomial $P(z)$ is said to be \emph{normalized} if it is monic and satisfies $P(0) = 0$. Such polynomials were employed by Smale~\cite{Sma81} in the context of his ``mean value conjecture''. Motivated by Smale's work, Kostrikin~\cite{Kos84} put forward several conjectures concerning normalized polynomials. Notably, when formulated in terms of normalized polynomials, the well-known \href{https://google-deepmind.github.io/alphaevolve_repository_of_problems/problems/20.html}{Sendov's conjecture} (see, e.g.,~\cite{Hay19}) is equivalent to the following statement.

\begin{conj}[Sendov]\label{conj:Sendov}
    Let \[
	P(z) = z \prod_{j=1}^{n} (z - z_j)
	\] with all $z_j$ lying in the closed disk \(\{z \in \C : |z + a| \le 1\}\), where $a \in [0,1]$. Let $w_1,\dots,w_n$ denote the zeros of the derivative $P'(z)$. Then
	\[
	\min_{1\le k\le n}|w_k|\le 1.
	\]
\end{conj}
Numerical evidence suggests that a stronger version of Sendov's conjecture should hold.
\begin{conj}\label{conj1.2}
 Let
	\[
	P(z) = z \prod_{j=1}^{n} (z - z_j)
	\]
with all $z_j$ lying in the closed disk $\{z \in \C : |z + a| \le 1\}$, where $a \in [0,1]$. Let $w_1,\dots,w_n$ denote the zeros of the derivative $P'(z)$. Then for any $\lambda\ge 1$,
\begin{align}\label{eq:conj -p}
  \sum_{k=1}^n\frac{1}{|w_k|^\lambda}\ge n.  
\end{align}
\end{conj} 

It is clear that, as $\lambda \to \infty$, Conjecture~\ref{conj1.2} reduces to Conjecture~\ref{conj:Sendov}. We also remark that, when $0 < \lambda < 1$, inequality~\eqref{eq:conj -p} fails in general. Indeed, take $a = 1$ and $z_j = -1 - a$ for all $j$. Then
\[
\sum_{k=1}^n \frac{1}{\lvert w_k \rvert^\lambda} 
= \frac{(n+1)^\lambda + (n - 1)}{2^\lambda} 
< n,
\]
for every $0 < \lambda < 1$.

\subsection{Towards Schoenberg type inequalities of negative orders}

As indicated earlier, several elegant bounds have been established for power sums of the form
\[
\sum_{k} |w_k|^{p}, \qquad p\ge 1.
\]
This naturally leads to the following question:
\begin{quote}
	\emph{Do analogous estimates continue to hold when the exponent \( p \) is negative?}
\end{quote}
This question is, moreover, closely connected to Conjecture~\ref{conj1.2}.

A basic difficulty is that the expression
\(
\sum_{k} |w_k|^{p} 
\)
is meaningful for \(p<0\) only when none of the critical points \(w_k\) vanish. To ensure this, we restrict our attention to a subclass of normalized polynomials that vanish at the
origin and have all other zeros nonzero. Specifically, we consider
\begin{equation*}\label{eq:main-1}
	P(z) = z \prod_{j=1}^{n} (z - z_j),
	\qquad z_j \in \C \setminus \{0\},
\end{equation*} 
and denote by \(w_1,\dots,w_n\) the zeros of the derivative \(P'(z)\), i.e.,
the critical points of \(P(z)\). Since \(P(0)=0\) while \(P'(0) = \prod_{j=1}^{n} (-z_j) \neq 0\), which follows that none of the critical points vanish:
\[
w_k \neq 0 \qquad (1\le k\le n).
\]

In this paper we establish Schoenberg type inequalities of negative orders for such polynomials.
We obtain a sharp inequality for the order $p = -1$, together with a sharp family for all even negative orders $p = -2m$ ($m \in \mathbb{N}$), which in particular includes the case $p = -2$, and admits a further sharpening in the special case $p = -4$.
In addition, we derive non-sharp inequalities that are valid for the full range of exponents $p \le -1$.

Let \( \bv = (v_1, \ldots, v_m) \) with \( v_1, \ldots, v_m \in \mathbb{C} \). We denote by \( |\bv| = (|v_1|, \ldots, |v_m|) \) the coordinate-wise modulus of \( \bv \), by \( \bv^{-1} = (v_1^{-1}, \ldots, v_m^{-1}) \) its coordinate-wise inverse, and by \( e_k(\bv) \) the \( k \)th elementary symmetric function of \( v_1, \ldots, v_m \) (see, e.g.,~\cite[p.~54]{HJ13}).

In contrast to Lemma~\ref{lem:ek}, our first result concerning elementary symmetric polynomials is the following.
\begin{thm}[Theorem~\ref{eq:ele-minus-one}]\label{mthm1}Let
\[
  P(z) = z \prod_{j=1}^{n} (z - z_j),
  \qquad z_j \in \C \setminus \{0\},
\]
and let $w_1,\dots,w_n$ denote the zeros of $P'(z)$. Then
\begin{equation}\label{eq:intro-ele-minus-one}
  e_k(|\bw|^{-1})
  \le (k+1) e_k(|\bz|^{-1}),  \qquad  k=1, \ldots, n,
\end{equation}
where \( \bz = (z_1,\dots,z_n) \) and \( \bw = (w_1,\dots,w_n) \).
Moreover, equality in \eqref{eq:intro-ele-minus-one} holds for all $k=1,\dots,n$ whenever
$z_1,\dots,z_n$ all lie on the same ray emanating from the origin.
\end{thm}
Taking $k=1$ in Theorem \ref{mthm1}, we derive a sharp Schoenberg type inequality of order $-1$:
\[
\sum_{k=1}^{n} \frac{1}{|w_k|}
\le 2 \sum_{j=1}^{n} \frac{1}{|z_j|}.
\]

For general negative even orders $-2m$, we have the following result.
\begin{thm}[Theorem \ref{thm:general-2m}]\label{mthm2}
	Let
	\[
	P(z) = z \prod_{j=1}^n (z - z_j), \qquad z_j \in \mathbb{C} \setminus \{0\},
	\]
	and let $w_1,\dots,w_n$ be the zeros of $P'(z)$. Then for every integer $m\ge 1$ we have
	\begin{align*}
		\sum_{k=1}^n \frac{1}{|w_k|^{2m}}
		\le
		&\sum_{j=1}^n \frac{1}{|z_j|^{2m}}
		\notag\\&+
		\sum_{\ell=1}^{m} (n+2)^{\ell}
		\sum_{\substack{r_0,\dots,r_\ell \ge 0\\ r_0+\cdots+r_\ell = m-\ell}}
		\left(\sum_{i=1}^n \frac{1}{|z_i|^{2(r_0+r_\ell+1)}}\right) \cdot \prod_{j=1}^{\ell-1} \left(\sum_{i=1}^n \frac{1}{|z_i|^{2(r_j+1)}}\right).
	\end{align*}
	Moreover, equality holds if and only if $z_1=\cdots=z_n$.
\end{thm}

In particular, taking $m=1$ in Theorem~\ref{mthm2} yields a sharp Schoenberg type inequality of order $-2$:
\[
\sum_{k=1}^{n}\frac{1}{|w_k|^{2}}
\le
(n+3)\sum_{j=1}^{n}\frac{1}{|z_j|^{2}}.
\]Furthermore, as a corollary of Theorem~\ref{mthm2} (see Corollary~\ref{cor:sharp-2m}), we obtain the following sharp Schoenberg type inequalities of order $-2m$:
\[
\sum_{k=1}^n \frac{1}{|w_k|^{2m}}
\le
\frac{(n+1)^{2m}+n-1}{n}\sum_{j=1}^n \frac{1}{|z_j|^{2m}},
\qquad m \ge 1.
\]
This leads us to propose the following conjecture.
\begin{conj}\label{conj:negative_order_p_11_1}
  Let
	\[
	P(z) = z \prod_{j=1}^n (z - z_j), \qquad z_j \in \mathbb{C}\setminus\{0\},
	\]
	and let $w_1,\dots,w_n$ be the zeros of $P'(z)$. Then for every real number $p \ge 1$ we have
    \[
\sum_{k=1}^n \frac{1}{|w_k|^{p}}
		\le
		\frac{(n+1)^{p}+n-1}{n}\sum_{j=1}^n \frac{1}{|z_j|^{p}}.
    \]Moreover, equality holds if and only if $z_1=\cdots=z_n$.
\end{conj}

Using complex interpolation between the endpoint cases \(p = 1, 2, \infty\), we obtain the following full-range bounds for all \(p \ge 1\), which are in general non-sharp, to be compared with Conjecture~\ref{conj:negative_order_p_11_1}.

\begin{thm}[Theorems~\ref{thm:negative-interpolation_for_p_2_infinity} and~\ref{thm:negative-1<p<2}]
	Let
	\[
	P(z) = z \prod_{j=1}^n (z - z_j), \qquad z_j \in \mathbb{C}\setminus\{0\},
	\]
	and let $w_1,\dots,w_n$ be the zeros of $P'(z)$. Then for 
	$1\le p\le 2$ we have
	\begin{equation*}
		\sum_{k=1}^n \frac{1}{|w_k|^{p}}
		\le
		(n+3)^{\frac{p}{2}}\sum_{j=1}^n \frac{1}{|z_j|^{p}}.
	\end{equation*} For  
	$p\ge 2$ we have
	\begin{equation*}
		\sum_{k=1}^n \frac{1}{|w_k|^{p}}
		\le
		(n+1)^{p-2}(n+3)\sum_{j=1}^n \frac{1}{|z_j|^{p}}.
	\end{equation*}
\end{thm}
Finally, turning our attention to general complex polynomials, we obtain a dual counterpart of the Schoenberg type inequality of order $-2$. Let \( \Re z \) denote the real part of a complex number $z$.

\begin{thm}[Corollary~\ref{cor:dual -2}]
Let
\[
  P(z) = \prod_{j=1}^{n+1} (z - z_j),
\]
and let $w_1, \ldots, w_n$ be the zeros of $P'(z)$.
Assume that all $z_j,w_k$ lie in $\C \setminus \{0\}$ and that $w_1,\ldots,w_n$ are distinct. Then
\[
\sum_{j=1}^{n+1} \frac{1}{|z_j|^{2}}
\le
\sum_{k=1}^n \frac{1}{|w_k|^2}
+ |c|^2 \left( 1 + \sum_{k=1}^n \frac{|r_k|}{|w_k|^2} \right)^2
+ 2\Re \left( c \sum_{k=1}^n \frac{r_k}{w_k |w_k|^2} \right), 
\]
where
\[
c = \left(G - \sum_{k=1}^n\frac{r_k}{w_k}\right)^{-1}, \qquad
G = \frac{1}{n}\sum_{k=1}^n w_k, \qquad
r_k = \frac{(n+1)P(w_k)}{P''(w_k)}.
\]
Moreover, equality is attained if and only if $r_k(\overline{w_k-G})^2$ is non-negative for every $k=1,\ldots,n$.
\end{thm}

\medskip

\noindent\textbf{Organization of the paper.} Section~\ref{sec:de_Bruin_Sharma_Problem1} is devoted to the proofs of Conjectures~\ref{conj1:p=4} and~\ref{conj2:p=4} and of Theorem~\ref{thm:solution of de Bruin--Sharma problem}. In Section \ref{sec:tools}, we present several preliminary results required for the subsequent analysis. Section~\ref{sec:new proof} offers a new proof of Pereira's result and the Schoenberg type inequality of order 1; we also examine separately the ranges $1 \le p \le 2$ and $p \ge 2$ under the zero barycenter condition. Sections~\ref{sec:order_negative_1}--\ref{sec:negative_interpolation} establish Schoenberg type inequalities for negative orders. Therein, we derive sharp inequalities for the specific orders $-1$, $-2$, $-4$, and $-2m$ (with $m \in \mathbb{N}$), as well as non-sharp inequalities valid for all exponents $p \leq -1$. Finally, Section~\ref{sec: dual cases for order -2} develops a dual counterpart of the Schoenberg type inequality of order $-2$.


\section{Solution of the de Bruin--Sharma problem}\label{sec:de_Bruin_Sharma_Problem1}


In this section we first confirm Conjecture~\ref{conj1:p=4}. 
More precisely, we show that Conjecture~\ref{conj1:p=4} follows from the result of
Kushel and Tyaglov~\eqref{eq:kt}, together with a sharp inequality for the zero
configuration in the case where $n>4$ is odd (see Theorem~\ref{thm:complex-correct11}). Second, we confirm Conjecture~\ref{conj2:p=4}. We prove that
Conjecture~\ref{conj2:p=4} follows from the result of Cheung and Ng~\eqref{eq:dbs},
combined with another sharp inequality for the zero configuration, valid for every
$n\ge4$ (see Theorem~\ref{thm:complex-correct22}). Finally, we use these ingredients to obtain a complete solution to
Problem~\ref{prob:dbs_12345}.

\begin{thm}\label{thm:complex-correct11}
	Let $n>4$ be an odd integer, and let $z_1,\dots,z_n\in\mathbb{C}$ satisfy $\sum_{j=1}^n z_j=0$. Then
	\[
	\left(\sum_{j=1}^n |z_j|^2\right)^2
	+\left|\sum_{j=1}^n z_j^2\right|^2
	\le
	\frac{2n(n^2-1)}{n^2+3} \sum_{j=1}^n |z_j|^4.
	\]
	Moreover, the constant $\frac{2n(n^2-1)}{n^2+3}$ is best possible.
\end{thm}


Assuming Theorem~\ref{thm:complex-correct11}, we can now derive Conjecture~\ref{conj1:p=4}.

\begin{proof}[Proof of Conjecture~\ref{conj1:p=4} assuming Theorem~\ref{thm:complex-correct11}]
By the Kushel--Tyaglov inequality~\eqref{eq:kt} and Theorem~\ref{thm:complex-correct11}, we have
\begin{align*}
	\sum_{k=1}^{n-1} |w_k|^4 
	&\le 
	\frac{n-4}{n} \sum_{j=1}^{n} |z_j|^4 
	+ \frac{1}{n^2} 
	\left( \sum_{j=1}^{n} |z_j|^2 \right)^2 
	+ \frac{1}{n^2} 
	\left| \sum_{j=1}^{n} z_j^2 \right|^2 \\
	&\le 
	\left( \frac{n-4}{n}
	+ \frac{2(n^2-1)}{n(n^2+3)}
	\right)
	\sum_{j=1}^{n} |z_j|^4
	= 
	\frac{n^3 - 2n^2 + 3n - 14}{n(n^2 + 3)} 
	\sum_{j=1}^n |z_j|^4,
\end{align*}
which is exactly the inequality asserted in Conjecture~\ref{conj1:p=4}. 
\end{proof}

Before proving Theorem~\ref{thm:complex-correct11}, we establish the following sharp real inequality.
\begin{lem}\label{lem:real1}
	Let $n>4$ be an odd integer. For $x_1,\dots,x_n \in\mathbb{R}$ satisfying $\sum_{j=1}^n x_j=0$, one has
	\[
	\left(\sum_{j=1}^n x_j^2\right)^2 \le \frac{n(n^2-1)}{n^2+3}\sum_{j=1}^n x_j^4.
	\]
	Moreover, the constant is best possible, with equality if 
	\[
	x_1=\cdots=x_{\frac{n-1}{2}}=a,\qquad 
	x_{\frac{n+1}{2}}=\cdots=x_n=-\frac{n-1}{n+1}\,a,
	\]
	for some $a\neq 0$.
\end{lem}

\begin{proof}
	By homogeneity, it suffices to prove that the minimum of $S_4(x):=\sum_{j=1}^n x_j^4$
	on the feasible set
	\[
	\mathcal{M}:=\left\{x\in\mathbb{R}^n:\ S_1(x):=\sum_{j=1}^n x_j=0,\ 
	S_2(x):=\sum_{j=1}^n x_j^2=1\right\}
	\]
	equals $\frac{n^2+3}{n(n^2-1)}$. The set $\mathcal{M}$ is the intersection of the unit sphere with
	an affine hyperplane; hence it is compact. Since $S_4$ is continuous, a minimizer $x^\star\in\mathcal{M}$ exists.
	
	At any feasible $x \in \mathcal{M}$, the constraint gradients are
	\[
	\nabla S_1(x)=(1,\dots,1),\qquad \nabla S_2(x)=2(x_1,\dots,x_n).
	\]
	Let $a\nabla S_1(x) + b\nabla S_2(x)=0$, then $a+2b x_i=0$ for all $1 \leq i \leq n$. If $b\neq0$ we obtain that $x_i$ is constant. But $\sum_{j=1}^n x_j=0$ forces these constants to be $0$, which contradicts the condition
	$\sum_{j=1}^n x_j^2=1$. Hence the gradients are linearly independent and the linear independence constraint qualification (LICQ) of \cite[Definition~12.4]{NocedalWright2006} holds at all feasible points $x \in \mathcal{M}$. Thus we are now able to use the first-order necessary conditions. By \cite[Theorem~12.1]{NocedalWright2006} there exist Lagrange multipliers $(\lambda,\mu)\in\mathbb{R}^2$ such that the Lagrangian
	\[
	\mathcal{L}(x,\lambda,\mu):=S_4(x)-\lambda\bigl(S_2(x)-1\bigr)-\mu S_1(x)
	\]
	satisfies
	\begin{equation}\label{eq:KKT-Lemma54}
		\nabla_x \mathcal{L}(x^\star,\lambda,\mu)=0
		\quad\Longleftrightarrow\quad
		4x_i^{\star3}-2\lambda x_i^\star-\mu=0,\qquad i=1,\dots,n.
	\end{equation}In particular, each coordinate $x_i^\star$ is one of real roots of the cubic
\[
f(t):=4t^3-2\lambda t-\mu,
\]
which has at most three real roots. Hence the vector $x^\star$ assumes at most three distinct values among its coordinates.


	By \cite[Eq.~(12.53)]{NocedalWright2006}, the critical cone at $(x^\star,\lambda,\mu)$ is
	\[
	\mathcal{C}(x^\star,\lambda)
	=\{v\in\mathbb{R}^n: \nabla S_1(x^\star)^\top v=0, \nabla S_2(x^\star)^\top v=0\}
	=\left\{v\in\mathbb{R}^n: \sum_{i=1}^n v_i=0, \sum_{i=1}^n x_i^\star v_i=0\right\}.
	\]
	The Hessian of the Lagrangian is diagonal:
	\[
	\nabla_{xx}^2 \mathcal{L}(x^\star,\lambda,\mu)=\mathrm{diag}\bigl(12x_1^{\star2}-2\lambda,\dots,12x_n^{\star2}-2\lambda\bigr).
	\]
	Now we use the second-order necessary conditions. By \cite[Theorem~12.5]{NocedalWright2006},
	\begin{equation}\label{eq:2nd-necessary}
		v^{\top}\nabla_{xx}^2 \mathcal{L}(x^\star,\lambda,\mu) v
		=\sum_{i=1}^n \left(12x_i^{\star2}-2\lambda\right) v_i^2 \ge 0,
		\qquad\forall\, v\in\mathcal{C}(x^\star,\lambda).
	\end{equation} Recall that the coordinates of $x^\star$ can take at most three distinct values. We now rule out the possibility of three-valued minimizers. Arguing by contradiction, suppose that $x^\star$ takes exactly three distinct values $a>b>c$, with multiplicities $p,q,r$ (so that $p+q+r=n$). The cubic polynomial
	$f(t)=4(t-a)(t-b)(t-c)$, so
	\[
	f'(a)=4(a-b)(a-c)>0,\quad f'(b)=4(b-a)(b-c)<0,\quad f'(c)=4(c-a)(c-b)>0,
	\]
	and $12x_i^{\star2}-2\lambda=f'(x_i^\star)$.
	
	\emph{Case 1: $q\ge2$.} Pick distinct indices \(i_1\neq i_2\) with \(x_{i_1}^\star=x_{i_2}^\star=b\), and define \(v\in\mathbb{R}^n\) by
	\[
	v_{i_1}=t,\qquad v_{i_2}=-t,\qquad v_j=0\ \text{for } j\notin\{i_1,i_2\},
	\]
	where \(t\in\mathbb{R}\). Then $\sum_{i=1}^n v_i=0$ and
	$\sum_{i=1}^n x_i^\star v_i=b(t-t)=0$, hence $v\in\mathcal{C}(x^\star,\lambda)$ and
	\[
	v^{\top}\nabla_{xx}^2 \mathcal{L}(x^\star,\lambda,\mu) v =2 f'(b) t^2<0,
	\]
	contradicting \eqref{eq:2nd-necessary}. Thus $q\le1$.
	
	\emph{Case 2: $q=1$.} Without loss of generality, assume $p \geq 2$, since $p+q+r=n>4$. Pick two distinct indices \(i_1\neq i_2\) corresponding to the entries with value \(a\), and one index \(i_3\) with value \(b\) and another index \(i_4\) with value \(c\). Define \(v\in\mathbb{R}^n\) by
	\[
	v_{i_1}=\alpha,\quad 
	v_{i_2}=\alpha,\quad 
	v_{i_3}=-\theta,\quad 
	v_{i_4}=-\kappa,\quad 
	v_j=0\ \text{for } j\notin\{i_1,i_2,i_3,i_4\},
	\]
	where
	\[
	\theta=\frac{2(a-c)}{b-c} \alpha,
	\qquad 
	\kappa=\frac{2(b-a)}{b-c} \alpha,
	\qquad 
	\alpha\in\mathbb{R}.
	\]Then $\sum_{i=1}^n v_i=0$ and $\sum_{i=1}^n x_i^\star v_i=0$, so $v\in\mathcal{C}(x^\star,\lambda)$, and
	a direct computation yields
	\[
	v^{\top}\nabla_{xx}^2 \mathcal{L}(x^\star,\lambda,\mu) v
	=2f'(a)\alpha^2+f'(b)\theta^2+f'(c)\kappa^2
	=-8(a-b)(a-c)\,\alpha^2<0,
	\]
	again contradicting \eqref{eq:2nd-necessary}. Hence \(q = 0\); in particular, a minimizer cannot be three-valued.
	
	\smallskip
	We conclude that the coordinates of a minimizer $x^\star$ take exactly two distinct values: $a>0$ with multiplicity $p$ and $-d<0$ with multiplicity $r$, where $p+r=n$ and, from the constraints,
	\[
	pa=rd,\ \ pa^2+rd^2=1
	\ \Longrightarrow\ 
	a^2=\frac{r}{pn},\ \ d^2=\frac{p}{rn}.
	\]
	Hence
	\[
	S_4(x^\star)=pa^4+rd^4
	=\frac{p^3+r^3}{n^2pr}
	=\frac{n^3-3npr}{n^2pr}
	=\frac{n}{pr}-\frac{3}{n}.
	\]
	For fixed $n$, the last expression is minimized when $pr$ is maximized, that is, when $p$ and $r$ are
	as equal as possible. Since $n$ is odd, the maximum is $pr=(n^2-1)/4$, attained at
	$p=(n-1)/2$, $r=(n+1)/2$. Therefore
	\[
	\min_{\mathcal{M}} S_4=\frac{n}{(n^2-1)/4}-\frac{3}{n}
	=\frac{n^2+3}{n(n^2-1)}.
	\]
	This proves the inequality in Lemma~\ref{lem:real1}. The equality case corresponds to
	the two-point configuration with $p=(n-1)/2$, $r=(n+1)/2$ and $pa=rd$, which is exactly the one stated in the lemma. This completes the proof.
\end{proof}

We are now ready to present the following.

\begin{proof}[Proof of Theorem~\ref{thm:complex-correct11}]
	Write $z_j=x_j+iy_j$ with $x_j,y_j\in\mathbb{R}$, so $\sum_{j=1}^n x_j=\sum_{j=1}^n y_j=0$. After multiplying all $z_j$ by a unimodular constant we may assume $T:=\sum_{j=1}^n z_j^2$ is real. Let $A:=\sum_{j=1}^n x_j^2$ and $B:=\sum_{j=1}^n y_j^2$. Then
	\[
	\sum_{j=1}^n|z_j|^2=A+B,\qquad \sum_{j=1}^n z_j^2=(A-B)+2i\sum_{j=1}^n x_jy_j,
	\]
	and since $T$ is real we have $\sum_{j=1}^n x_jy_j=0$, hence
	\[
	\left(\sum_{j=1}^n|z_j|^2\right)^2+\left|\sum_{j=1}^n z_j^2\right|^2
	=(A+B)^2+(A-B)^2=2(A^2+B^2).
	\]
	Applying Lemma~\ref{lem:real1} separately to $(x_j)$ and $(y_j)$ gives $A^2\le \tfrac{n(n^2-1)}{n^2+3}\sum_{j=1}^n x_j^4$ and $B^2\le \tfrac{n(n^2-1)}{n^2+3}\sum_{j=1}^n y_j^4$. Therefore
	\[
	\left(\sum_{j=1}^n|z_j|^2\right)^2+\left|\sum_{j=1}^n z_j^2\right|^2
	\le \frac{2n(n^2-1)}{n^2+3}\left(\sum_{j=1}^n x_j^4+\sum_{j=1}^n y_j^4\right)
	\le \frac{2n(n^2-1)}{n^2+3}\sum_{j=1}^n |z_j|^4,
	\]
	as claimed. The sharpness follows from the equality case of Lemma~\ref{lem:real1}, which completes the proof.
\end{proof}

We now turn to Conjecture~\ref{conj2:p=4}.

\begin{thm}\label{thm:complex-correct22}
	Let $n\geq 4$ and let $z_1,\dots,z_n\in\mathbb{C}$ satisfy $\sum_{j=1}^n z_j=0$. Then
	\[
	\sum_{j=1}^n |z_j|^4 \le \frac{n^2-3n+3}{n^2-n}\left(\sum_{j=1}^n |z_j|^2\right)^2.
	\]
	Moreover, the constant $\frac{n^2-3n+3}{n^2-n}$ is best possible.
\end{thm}

Assuming Theorem~\ref{thm:complex-correct22}, we can now derive Conjecture~\ref{conj2:p=4}.

\begin{proof}[Proof of Conjecture~\ref{conj2:p=4} assuming Theorem~\ref{thm:complex-correct22}]
By the Cheung--Ng inequality~\eqref{eq:dbs} and Theorem~\ref{thm:complex-correct22}, we have
\begin{align*}
  \sum_{k=1}^{n-1} |w_k|^4 
  &\le 
  \frac{n-4}{n} \sum_{j=1}^{n} |z_j|^4 
  + \frac{2}{n^2} 
    \left( \sum_{j=1}^{n} |z_j|^2 \right)^2 \\
  &\le 
  \left( \frac{n-4}{n}\cdot\frac{n^2-3n+3}{n^2-n}
  + \frac{2}{n^2}
  \right)
  \left( \sum_{j=1}^{n} |z_j|^2 \right)^2 \\
  &=
  \frac{(n-2)^4+n-2}{n^2(n-1)^2} 
  \left( \sum_{j=1}^{n} |z_j|^2 \right)^2,
\end{align*}
which is exactly the inequality asserted in Conjecture~\ref{conj2:p=4}.
\end{proof}




Before proving Theorem~\ref{thm:complex-correct22}, we establish the following sharp real inequality. The proof of Lemma~\ref{lem:real2} can also be obtained via the Lagrange multiplier method, in the same spirit as the proof of Lemma~\ref{lem:real1}. However, we present below a short and elegant argument based on an explicit sum-of-squares identity, which is of independent interest.

\begin{lem}\label{lem:real2}
	Let $n\ge4$ and let $a_1,\dots,a_n\in\mathbb{R}$ satisfy $\sum_{j=1}^n a_j=0$.
	Then
	\[
	\sum_{j=1}^n a_j^4 \le 
	\frac{n^2-3n+3}{n^2-n}
	\left(\sum_{j=1}^n a_j^2\right)^2.
	\]
	Moreover, the constant is best possible, with equality if
	\[
	a_1=\cdots=a_{n-1}=a,\qquad 
	a_n=-(n-1)a,
	\]
	for some $a\ne0$.
\end{lem}

\begin{proof}
	Since $\sum_{i=1}^{n-1}a_i=-a_n$, we know that
	\[
	\sum_{j=1}^n a_j^2=\sum_{i=1}^{n-1}a_i^2+\left(\sum_{i=1}^{n-1}a_i\right)^2,\qquad
	\sum_{j=1}^n a_j^4=\sum_{i=1}^{n-1}a_i^4+\left(\sum_{i=1}^{n-1}a_i\right)^4.
	\]
	We claim the following identity:
	\begin{equation}\label{eq:sos-identity}
		\begin{aligned}
			&\frac{n^2-3n+3}{n^2-n}\left(\sum_{i=1}^{n-1}a_i^2+\left(\sum_{i=1}^{n-1}a_i\right)^2\right)^2
			-\left(\sum_{i=1}^{n-1}a_i^4+\left(\sum_{i=1}^{n-1}a_i\right)^4\right)\\
			&\quad=\frac{2}{n(n-1)(n-3)}\sum_{1\le i<j\le n-1}(a_i-a_j)^2\biggl[
			\bigl(a_i+a_j+(2-n)\sum_{i=1}^{n-1}a_i\bigr)^2\\
			&\hspace{5.0cm}+\frac{n-1}{2}\Bigl((n-3) \sum_{\substack{1\le k\le n-1\\ k\ne i,j}}a_k^2
			-\Bigl(\sum_{\substack{1\le k\le n-1\\ k\ne i,j}} a_k\Bigr)^{2}\Bigr)\biggr].
		\end{aligned}
	\end{equation}
	A straightforward verification by comparing coefficients is given in Appendix~\ref{app:identity-proof}.
	
	Since, for each fixed pair $(i,j)$, the Cauchy--Schwarz inequality applied to the $(n-3)$ numbers $\{a_k\}_{k\ne i,j}$ yields
	\[
	(n-3)\sum_{\substack{1\le k\le n-1\\ k\ne i,j}} a_k^2 \ge \Bigl(\sum_{\substack{1\le k\le n-1\\ k\ne i,j}} a_k\Bigr)^2.
	\]Moreover, the outer
factor $\tfrac{2}{n(n-1)(n-3)}$ is positive for $n\ge4$. Therefore the right-hand
	side of
	\eqref{eq:sos-identity} is nonnegative, proving the desired inequality in Lemma~\ref{lem:real2}. The equality case can be verified by a direct computation, completing the proof.
\end{proof}

We are now ready to present the following.

\begin{proof}[Proof of Theorem~\ref{thm:complex-correct22}]
	By homogeneity it suffices to prove the inequality under the normalization
	$\sum_{j=1}^n |z_j|^2=1$; the general case then follows by scaling.

	Write $z_k=x_k+iy_k$ and $r_k^2:=x_k^2+y_k^2$, then the condition implies $\sum_{k=1}^n x_k=\sum_{k=1}^n y_k=0$ and $\sum_{k=1}^n r_k^2=1$. Define $G(x,y):=-\sum_{k=1}^n r_k^4$, $g_1(x,y):=\sum_{k=1}^n x_k$, $g_2(x,y):=\sum_{k=1}^n y_k$, and $g_3(x,y):=\sum_{k=1}^n r_k^2-1$. Consider the equality-constrained minimization
	of $G$ on $\mathbb{R}^{2n}$:
	\[
	\min_{(x,y)} G(x,y)
	\quad\text{s.t.}\quad
	g_1(x,y)=0,\ 
	g_2(x,y)=0,\ 
	g_3(x,y)=0.
	\]
	This is of the form \cite[Eq.~(12.1)]{NocedalWright2006}. The feasible set 
	\[
	\mathcal{M}
	=\left\{(x,y)\in\mathbb{R}^{2n}:\ \sum_{j=1}^n x_j=0,\ \sum_{j=1}^n y_j=0,\ \sum_{j=1}^n (x_j^2+y_j^2)=1\right\}
	\]
	is compact and $G$ is
	continuous, so a minimizer $(x^\star,y^\star)$ exists.
	
	At any feasible $(x,y) \in \mathcal{M}$ the constraint gradients are \[
\nabla g_1 = (\underbrace{1,\dots,1}_{n},\underbrace{0,\dots,0}_{n}),
\quad
\nabla g_2 = (\underbrace{0,\dots,0}_{n},\underbrace{1,\dots,1}_{n}),
\quad
\nabla g_3 = 2(x_1,\dots,x_n,y_1,\dots,y_n).
\] Let $a\nabla g_1+b\nabla g_2+c\nabla g_3=0$, then $a+2cx_i=0$ and $b+2cy_i=0$ for all $1\leq i \leq n$. If $c=0$, then $a=b=0$. So we only need to consider the case $c \neq 0$, which means $x_i,y_i$ are constants. But $\sum_{k=1}^n x_k=\sum_{k=1}^n y_k=0$ forces these constants to be $0$, which contradicts the condition $\sum_{j=1}^n (x_j^2+y_j^2)=1$. Hence the gradients are linearly independent and the linear independence constraint qualification (LICQ) of \cite[Definition~12.4]{NocedalWright2006} holds at all feasible points. Thus we are now able to use the first-order necessary conditions. By \cite[Theorem~12.1]{NocedalWright2006} there exist Lagrange multipliers $(\alpha,\beta,\lambda)\in\mathbb{R}^3$
	such that
	\[
	\nabla G(x^\star,y^\star)+\alpha\nabla g_1(x^\star,y^\star)+\beta\nabla g_2(x^\star,y^\star)+\lambda\nabla g_3(x^\star,y^\star)=0.
	\]
	Computing the partial derivatives gives, for each $1 \leq k \leq n$,
	\[
	4r_k^2 x_k^\star-2\lambda x_k^\star-\alpha=0,\qquad
	4r_k^2 y_k^\star-2\lambda y_k^\star-\beta=0.
	\]
	Let $\mu:=\tfrac12(\alpha+i\beta)$ and $z_k^\star:=x_k^\star+iy_k^\star$. The two real relations combine into the single complex equation
	\begin{equation}\label{eq:LM-complex}
		(2|z_k^\star|^2-\lambda) z_k^\star=\mu,\qquad k=1,\dots,n.
	\end{equation}
	
	We first claim that $\mu\neq 0$. Indeed, if $\mu=0$, then \eqref{eq:LM-complex}
	implies that each nonzero $z_k^\star$ satisfies $|z_k^\star|^2=\lambda/2$, hence all nonzero
	coordinates have the same modulus. Let $m$ denote the number of nonzero coordinates of $z^\star$. Because
	$\sum_{j=1}^n z_j^\star=0$, necessarily $m\ge 2$, and from $\sum_{j=1}^n|z_j^\star|^2=1$
	we get $|z_k^\star|^2=1/m$ for each nonzero index. Therefore
	\[
	\sum_{j=1}^n |z_j^\star|^4 = m\cdot \left(\frac{1}{m}\right)^2 = \frac{1}{m} \le \frac12 .
	\]
	However, the vector $v=(a,\dots,a,-(n-1)a)$ with $a>0$ chosen so that
	$\sum_{j=1}^n|v_j|^2=1$ (namely $a^2=1/(n(n-1))$) is admissible and satisfies
	\[
	\sum_{j=1}^n |v_j|^4
	= (n-1)a^4 + (n-1)^4 a^4
	= \frac{1}{n^2(n-1)} + \frac{(n-1)^2}{n^2}
	> \tfrac12 \quad (\text{since } n\ge 4),
	\]
	contradicting maximality of $\sum_{j=1}^n |z_j^\star|^4$. Hence $\mu\neq 0$.
	
	With $\mu\neq 0$, \eqref{eq:LM-complex} shows that each $z_k^\star$ is a real multiple
	of the same complex number $\mu$: there exist $t_k\in\mathbb{R}$ and a unit
	complex number $u:=\mu/|\mu|$ such that $z_k^\star=t_k u$ for all $k$.
	Using $\sum_{j=1}^n z_j^\star=0$ and $\sum_{j=1}^n|z_j^\star|^2=1$, we obtain
	\[
	\sum_{j=1}^n t_j=0,\qquad \sum_{j=1}^n t_j^2=1,\qquad
	\sum_{j=1}^n |z_j^\star|^4=\sum_{j=1}^n t_j^4 .
	\]
	Applying Lemma~\ref{lem:real2} to the real numbers $t_1,\dots,t_n$ yields
	\[
	\sum_{j=1}^n |z_j^\star|^4
	=\sum_{j=1}^n t_j^4
	\le \frac{n^2-3n+3}{n^2-n}\left(\sum_{j=1}^n t_j^2\right)^2
	=\frac{n^2-3n+3}{n^2-n}.
	\]
	This proves the inequality in Theorem~\ref{thm:complex-correct22}. The sharpness follows from the equality case of Lemma~\ref{lem:real2}, which completes the proof.
\end{proof}

As we have now established Conjecture~\ref{conj1:p=4} and Conjecture~\ref{conj2:p=4}, 
we are able to determine the set $\Omega(n)$ in Problem~\ref{prob:dbs_12345}, 
that is, to provide a complete solution to the 
\href{https://google-deepmind.github.io/alphaevolve_repository_of_problems/problems/24.html}{de~Bruin--Sharma problem}, see Theorem~\ref{thm:solution of de Bruin--Sharma problem}.

\begin{proof}[Proof of Theorem~\ref{thm:solution of de Bruin--Sharma problem}]
\emph{Case 1: $n>4$ odd.}
The set $\Omega(n)$ is clearly closed and convex. By \eqref{eq:dbs} together with
Conjecture~\ref{conj1:p=4} and Conjecture~\ref{conj2:p=4} (now theorems), we have
$\Omega(n)\supset \widetilde{\Omega}_{\mathrm{odd}}(n)$.

We show the reverse inclusion. If $(\alpha,\beta)\notin \widetilde{\Omega}_{\mathrm{odd}}(n)$, then necessarily either
\begin{equation}\label{eq:left-odd}
	\alpha < \alpha_{\mathrm{odd}}(n) - \frac{n(n^2-1)}{n^2+3}\beta
	\quad\text{with } 0\le \beta\le \frac{2}{n^2},
\end{equation}
or
\begin{equation}\label{eq:right-all}
	\alpha < \frac{\beta_{\mathrm{all}}(n)-\beta}{\frac{n^2-3n+3}{n^2-n}}
	\quad\text{with } \frac{2}{n^2}\le \beta\le \beta_{\mathrm{all}}(n).
\end{equation}
Consider the two test polynomials with zeros
\[
p_1:\ z_1=\cdots=z_{\frac{n-1}{2}}=1,\quad
z_{\frac{n+1}{2}}=\cdots=z_n=-\frac{n-1}{n+1},
\]
\[
p_2:\ z_1=\cdots=z_{n-1}=1,\quad z_n=-(n-1).
\]

\noindent\emph{For $p_1$.}
Let $m=\frac{n-1}{2}$ and $q=\frac{n-1}{n+1}$. Then \(p_1(z)=(z-1)^m(z+q)^{m+1}\). A direct computation gives
\[
\left(\sum_{j=1}^n |z_j|^2\right)^{2}=\frac{n^2(n-1)^2}{(n+1)^2},\quad
\sum_{j=1}^n |z_j|^4
=\frac{n(n-1)(n^2+3)}{(n+1)^3},\quad \sum_{k=1}^{n-1}|\xi_k|^4
=(m-1)+m\cdot q^4+\left(\frac{2}{n+1}\right)^{4}.
\]
Substituting these into \eqref{eq:dbs1212} gives, after dividing by $\sum|z_j|^4$,
\[
\alpha \ge \alpha_{\mathrm{odd}}(n)-\frac{n(n^2-1)}{n^2+3}\beta.
\]
With $0\le \beta\le 2/n^2$ on this branch, this contradicts \eqref{eq:left-odd}.

\medskip
\noindent\emph{For $p_2$.}
Since \(p_2(z)=(z-1)^{n-1}(z+(n-1))\), a direct computation gives
\[
\left(\sum_{j=1}^n |z_j|^2\right)^2=n^2(n-1)^2,\quad
\sum_{j=1}^n |z_j|^4=(n-1)+(n-1)^4,\quad \sum_{k=1}^{n-1}|\xi_k|^4=(n-2)+(n-2)^4.
\]Plugging into \eqref{eq:dbs1212} and dividing by $\sum|z_j|^4$ yields
\[
\alpha \ge \frac{\beta_{\mathrm{all}}(n)-\beta}{\frac{n^2-3n+3}{n^2-n}}.
\]With $2/n^2\le\beta\le \beta_{\mathrm{all}}(n)$ on this branch, this contradicts \eqref{eq:right-all}. Hence no $(\alpha,\beta)\notin \widetilde{\Omega}_{\mathrm{odd}}(n)$ can belong to $\Omega(n)$, and we conclude
$\Omega(n)=\widetilde{\Omega}_{\mathrm{odd}}(n)$.

\medskip
\emph{Case 2: $n\ge 4$ even.}
Again $\Omega(n)$ is closed and convex. By applying the Cauchy--Schwarz inequality to \eqref{eq:dbs} and combining it with Conjecture~\ref{conj2:p=4} (which is now a theorem), we obtain $\Omega(n)\supset \widetilde{\Omega}_{\mathrm{even}}(n)$.

For the reverse inclusion, if $(\alpha,\beta)\notin \widetilde{\Omega}_{\mathrm{even}}(n)$, then either
\begin{equation}\label{eq:left-even}
	\alpha + n \beta  <  \frac{n-2}{n}
	\quad\text{with } 0\le \beta\le \frac{2}{n^2},
\end{equation}
or \eqref{eq:right-all} holds. Consider the test polynomials with zeros
\[
\textstyle
p_3:\ z_1=\cdots=z_{n/2}=1,\quad
z_{n/2+1}=\cdots=z_n=-1,
\]
\[
p_2:\ z_1=\cdots=z_{n-1}=1,\quad z_n=-(n-1).
\]

\noindent\emph{For $p_3$.} Since $p_3(z)=(z-1)^{\frac{n}{2}}(z+1)^{\frac{n}{2}}$, a direct calculation gives
\[
\left(\sum_{j=1}^n |z_j|^2\right)^2=n^2,\quad
\sum_{j=1}^n |z_j|^4=n,\quad \sum_{k=1}^{n-1}|\xi_k|^4=n-2.
\]
Plugging these values into \eqref{eq:dbs1212} yields
\[
\alpha + n \beta \ge \frac{n-2}{n},
\]
With $0\le \beta\le 2/n^2$ on this branch, this contradicts \eqref{eq:left-even}.

As in Case~1, we next consider the polynomial $p_2$, which similarly leads to a contradiction with~\eqref{eq:right-all}. 
Hence no $(\alpha,\beta)\notin \widetilde{\Omega}_{\mathrm{even}}(n)$ can belong to $\Omega(n)$, and we conclude that $\Omega(n)=\widetilde{\Omega}_{\mathrm{even}}(n)$. This completes the proof.
\end{proof}

\section{Notation and tools for the sequel}\label{sec:tools}

In this section, we introduce the notation and review several basic results from matrix analysis and complex interpolation theory that will be used in the proofs of our main results.

\subsection{Matrix analysis}
Let \(A\) be an \(n \times n\) complex matrix. The eigenvalues and singular values of \(A\) are denoted by \(\lambda_j(A)\) and \(\sigma_j(A)\), respectively, for \(j = 1,\ldots,n\), arranged in nonincreasing order of their moduli. 
We write \(\operatorname{tr}(A)\) for the trace of \(A\), and denote by \(I\) the identity matrix and by \(J\) the all-ones matrix. 
We also write \(\mathbf{1} = (1,\dots,1)^{\top} \in \mathbb{C}^n\) for the all-ones column vector. For a (not necessarily square) \(m \times n\) complex matrix \(X\), we write \(X^*\) for its conjugate transpose and set \(|X| := (X^* X)^{1/2}\), the positive semidefinite square root of \(X^* X\). For \(a_1,\dots,a_n \in \mathbb{C}\) we denote by \(\mathrm{diag}(a_1,\dots,a_n)\) the diagonal matrix with diagonal entries \(a_1,\dots,a_n\).  
If \(a = (a_1,\dots,a_n)^{\top} \in \mathbb{C}^n\), we also write \(\mathrm{diag}(a)\) for \(\mathrm{diag}(a_1,\dots,a_n)\).

The following spaces and norm definitions are standard in matrix analysis; see~\cite{Bhatia13, HJ13, HJ91}.

\begin{defn}
Let $V \subset \mathbb{C}^n$ be a linear subspace. 
\begin{itemize}
 \item For $1 \le p \le \infty$ and $z=(z_1,\ldots,z_n)^\top \in V$, we define $\ell^p(V)$ as the space $V$ equipped with the \emph{$\ell^p$--norm}
    \[
        \|z\|_{\ell^p} = \left( \sum_{j=1}^n |z_j|^p \right)^{1/p}.
    \]
    \item For $1 \leq p \leq \infty$, let $S_p$ denote the Schatten von Neumann trace class, consisting of compact linear operators $A: \ell^2 \to \ell^2$, where $\ell^2$ denotes the Hilbert space of all square-summable complex sequences. This space is endowed with the \emph{Schatten $p$-norm}, defined by
$$\|A\|_{S_p} = \left( \sum_j \sigma_j(A)^p \right)^{1/p}.$$
\end{itemize}
\end{defn}

The following observation, due to Komarova and Rivin~\cite[Lemma~5.7]{KR03}, plays an important role in our derivation.

\begin{lem}[\cite{KR03}]\label{lem: zp'(z)}
Let \( n \geq 2 \) and \( z_1, z_2, \ldots, z_n \in \mathbb{C} \). Let \( p(z) = \prod_{j=1}^n (z - z_j) \), and define \( D = \operatorname{diag}(z_1, z_2, \ldots, z_n) \). Then the characteristic polynomial of the matrix \( D \left( I - \frac{1}{n} J \right) \) has the same set of zeros as the polynomial \( z p'(z) \). 
\end{lem}

\begin{remark}
Clearly, \(I - \tfrac{1}{n} J\) is a projection matrix. Since \(AB\) and \(BA\) have the same spectrum for any \(n\times n\) complex matrices \(A,B\), all eigenvalues of
\(D \left(I - \tfrac{1}{n} J\right)\) coincide with those of \( \left(I - \tfrac{1}{n} J\right) D \left(I - \tfrac{1}{n} J\right)\).
\end{remark}


The following lemma comes from Cheung and Ng ~\cite[Theorem~1.1]{CN06}.
\begin{lem}[\cite{CN06}]\label{lem:CN-matrix-model}
	Let $p(z)=\prod_{j=1}^{n}(z-z_j)$ be a polynomial of degree $m\ge2$ with complex zeros
	$z_1,\dots,z_n$ (listed with multiplicities).  
	Let
	\[
	D=\operatorname{diag}(z_1,\dots,z_{n-1}),
	\]
	and let $I$ and $J$ be the identity matrix and the all--ones matrix
	of order $n-1$, respectively.  Then the set of all eigenvalues of the
	$(n-1)\times(n-1)$ matrix
	\[
	D\left(I-\frac1n J\right)+\frac{z_n}{n}J
	\]
	is the same as the set of all critical points of the polynomial $p(z)$.
\end{lem}

The following lemma is standard in matrix analysis.
\begin{lem}[{\cite[p.~176]{HJ91}}]\label{lem:weyl}  Let $A$ be any $n\times n$ complex matrix. Then for any $r>0$, $$\sum_{j=1}^n|\lambda_j(A)|^r\le \sum_{j=1}^n\sigma_j^r(A)=\tr \left|A\right|^r,$$ with equality if and only if $A$ is normal \cite[p.~455]{HJ13}. The case $r=2$ of the inequality is called Schur's inequality~\cite[p.~102]{HJ13}. 
\end{lem}

We will also need the following comparison for singular values.

\begin{lem}\label{lem:XDx_vs_absD}
Let \(D=\mathrm{diag}(d_1,\dots,d_n)\) be diagonal and let \(X\in\mathbb{C}^{n\times n}\).
Then, for every \(k=1,\dots,n\),
\[
\prod_{j=1}^k \sigma_j\left(X^*  D X\right)
 \le
\prod_{j=1}^k \sigma_j\left(X^*  |D| X\right),
\]
where singular values are arranged in nonincreasing order.
\end{lem}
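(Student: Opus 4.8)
The plan is to reduce the claimed majorization of singular-value products to a statement about eigenvalues of positive semidefinite matrices, and then invoke a standard log-majorization/compression inequality. First I would observe that $X^*|D|X$ is positive semidefinite, so its singular values coincide with its eigenvalues; thus $\prod_{j=1}^k \sigma_j(X^*|D|X) = \prod_{j=1}^k \lambda_j(X^*|D|X)$. For the left-hand side, write the polar-type decomposition $D = U|D|$ where $U = \mathrm{diag}(u_1,\dots,u_n)$ is unitary (each $u_j$ a unit complex number, taking $u_j=1$ when $d_j=0$). Then $X^*DX = X^*U|D|X = (|D|^{1/2}U^*X)^*(|D|^{1/2}X)$, so $X^*DX = B^*C$ with $B = |D|^{1/2}U^*X$ and $C = |D|^{1/2}X$. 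The key point is that $B$ and $C$ have the same singular values, since $B^*B = X^*U|D|U^*X$ and $|D|, U$ commute (both diagonal), giving $B^*B = X^*|D|X = C^*C$. Hence $\sigma_j(B) = \sigma_j(C) = \sigma_j(|D|^{1/2}X)$ for all $j$.

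Now I would apply the multiplicative Horn-type inequality for singular values of a product: for any matrices $B, C$, $\prod_{j=1}^k \sigma_j(B^*C) \le \prod_{j=1}^k \sigma_j(B)\sigma_j(C)$ for each $k$ (this is the weak log-majorization $\sigma(B^*C) \prec_{w\log} \sigma(B)\circ\sigma(C)$, see \cite{Bhatia13, HJ91}). Applying it with the $B, C$ above yields
\[
\prod_{j=1}^k \sigma_j(X^*DX) \le \prod_{j=1}^k \sigma_j(B)\,\sigma_j(C) = \prod_{j=1}^k \sigma_j(|D|^{1/2}X)^2 = \prod_{j=1}^k \sigma_j\bigl((|D|^{1/2}X)^*(|D|^{1/2}X)\bigr) = \prod_{j=1}^k \sigma_j(X^*|D|X),
\]
where the middle equality uses $\sigma_j(M)^2 = \sigma_j(M^*M)$ and the last uses $(|D|^{1/2}X)^*(|D|^{1/2}X) = X^*|D|X$. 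This is exactly the claimed inequality.

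The main obstacle — really the only subtle point — is justifying that $B^*B = C^*C$, i.e. that passing from $D$ to $|D|$ via the diagonal unitary $U$ does not change the singular values of $|D|^{1/2}U^*X$ versus $|D|^{1/2}X$; this hinges on $U$ and $|D|$ being simultaneously diagonal, so that $U^*|D|U = |D|$. One should also handle the degenerate case where some $d_j = 0$: there $|D|^{1/2}$ kills the corresponding row, so the choice of $u_j$ is irrelevant and the factorization still goes through. An alternative, essentially equivalent route avoids the unitary altogether: note $(X^*DX)^*(X^*DX) = X^*\bar D X X^* D X$ and compare with $(X^*|D|X)^2 = X^*|D|XX^*|D|X$ via the Cauchy–Schwarz-type inequality $|Y^*M Y| \preceq $ (in the log-majorization sense) $Y^*|M|Y$ — but the factorization argument above is cleaner and cites only the classical product inequality for singular values, so I would present that.
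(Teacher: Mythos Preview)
Your proof is correct. Both your argument and the paper's ultimately rest on Horn's multiplicative singular-value inequality $\prod_{j\le k}\sigma_j(MN)\le\prod_{j\le k}\sigma_j(M)\sigma_j(N)$, but the decompositions feeding into it are different. The paper shows that the $2n\times 2n$ block matrix $\begin{pmatrix} X^*|D|X & X^*DX\\ X^*D^*X & X^*|D|X\end{pmatrix}$ is positive semidefinite (via the PSD-ness of $\begin{pmatrix}|D|&D\\ D^*&|D|\end{pmatrix}$), then invokes the contraction-factorization lemma to write $X^*DX=A^{1/2}KA^{1/2}$ with $A=X^*|D|X$ and $\|K\|\le1$, and applies Horn to the three-factor product. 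You instead exploit directly that $D$ is diagonal: writing $D=U|D|$ with $U$ a diagonal unitary commuting with $|D|$, you factor $X^*DX=B^*C$ with $B=|D|^{1/2}U^*X$, $C=|D|^{1/2}X$, observe $B^*B=C^*C=X^*|D|X$, and apply Horn to the two-factor product. Your route is a touch more elementary---it avoids the block-matrix and contraction-factorization machinery from \cite{Bhatia09}---while the paper's argument is more structural and would extend verbatim to any normal $D$ (as would yours, since for normal $D$ the polar factor $U$ still commutes with $|D|$). The handling of $d_j=0$ you flag is indeed harmless for exactly the reason you give.
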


\begin{proof}
Consider the \(2n\times 2n\) block matrix
\[
C
=\begin{pmatrix}
X^* |D|X & X^*  D X\\
X^*  D^*  X & X^* |D|X
\end{pmatrix}
=\begin{pmatrix}X^* &0\\0&X^* \end{pmatrix}
\begin{pmatrix}
|D| & D\\ D^*  & |D|
\end{pmatrix}
\begin{pmatrix}X&0\\0&X\end{pmatrix}.
\]
By \cite[Corollary~1.3.8]{Bhatia09}, we know that
\(
\begin{pmatrix}|D| & D\\ D^*  & |D|\end{pmatrix}
\) is positive semidefinite, thus $C$ is positive semidefinite. Let \(A=X^* |D|X\) and \(B=X^* DX\). By \cite[Proposition~1.3.2]{Bhatia09} (see also \cite[Lemma~3.5.12]{HJ91}), there exists a contraction \(K\) (that is, $\sigma_1(K) \leq 1$) such that \(B = A^{1/2} K A^{1/2}\). Therefore, in the same way as in \cite[p.~208]{HJ91}, for \(k=1,2,\ldots,n\), we have
\begin{align*}
\prod_{j=1}^k \sigma_j(B)
&= \prod_{j=1}^k \sigma_j\left(A^{1/2} K A^{1/2}\right) \le \prod_{j=1}^k \sigma_j\left(A^{1/2}\right)\sigma_j(K)\sigma_j \left(A^{1/2}\right)
\\
&\le \prod_{j=1}^k \sigma_j(A)^{1/2}\sigma_j(A)^{1/2}
= \prod_{j=1}^k \sigma_j(A).
\end{align*}
This completes the proof.\end{proof}

\subsection{Complex interpolation theory} We now introduce the tools from complex interpolation needed in this paper; for the basic theory of interpolation, we refer the interested reader to \cite[Chapters~1--5]{Bergh76}.

\begin{defn}
Let $A_0$ and $A_1$ be two topological vector spaces. We say that $A_0$ and $A_1$ are \emph{compatible} 
if there exists a Hausdorff topological vector space $\mathcal{Z}$ such that $A_0$ and $A_1$ are subspaces of $\mathcal{Z}$.  
In this case, we can form their sum $A_0 + A_1$ and their intersection $A_0 \cap A_1$.
\end{defn}

\begin{defn}
Let $X$ and $Y$ be normed spaces, and let $T : X \to Y$ be a bounded linear operator.  
The \emph{operator norm} of $T$ is defined by
\[
\|T\|_{X \to Y} := \sup_{\substack{x \in X \\ x \neq 0}} \frac{\|Tx\|_Y}{\|x\|_X}.
\]
\end{defn}

\begin{defn}
Let $(X_0, X_1)$ be a compatible Banach couple and $0 < \theta < 1$.  
The space $(X_0, X_1)_{[\theta]}$ is defined as
\[
(X_0, X_1)_{[\theta]} := \{ f(\theta) : f \in \mathcal{F}(X_0, X_1) \},
\]
where $\mathcal{F}(X_0, X_1)$ denotes the space of all bounded continuous functions 
$f : \{ z \in \mathbb{C} : 0 \le \Re z \le 1 \} \to X_0 + X_1$ 
which are analytic on $\{ z \in \mathbb{C} : 0 < \Re z < 1 \}$, 
satisfy $f(it) \in X_0$ for all $t \in \mathbb{R}$ and $f(1+it) \in X_1$ for all $t \in \mathbb{R}$,  
and are bounded on the boundary lines in the respective norms of $X_0$ and $X_1$.  
The norm on $(X_0, X_1)_{[\theta]}$ is given by
\[
\|x\|_{(X_0, X_1)_{[\theta]}} := \inf\big\{\, \|f\|_{\mathcal{F}(X_0, X_1)} : f(\theta) = x \,\big\}.
\]
\end{defn}

We recall the abstract complex interpolation theorem for Banach couples; see \cite[Theorem~4.1.2]{Bergh76}.

\begin{thm}[\cite{Bergh76}]\label{thm:complex-interpolation}
Let $(X_0, X_1)$ and $(Y_0, Y_1)$ be two compatible couples of Banach spaces, and let $0 < \theta < 1$.  
Suppose that 
\[
T : X_0 + X_1 \longrightarrow Y_0 + Y_1
\]
is a linear operator such that $T : X_j \to Y_j$ is bounded for $j = 0,1$.  
Then $T$ is bounded from $(X_0, X_1)_{[\theta]}$ to $(Y_0, Y_1)_{[\theta]}$, and 
\[
\|T\|_{\theta} \le \|T\|_{X_0 \to Y_0}^{\,1-\theta} \, \|T\|_{X_1 \to Y_1}^{\,\theta}.
\]
\end{thm}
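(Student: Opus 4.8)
The plan is to run the classical Riesz--Thorin argument in the abstract setting, exploiting directly the definition of the interpolation norm as an infimum over the admissible family $\mathcal{F}$. First I would fix $x \in (X_0,X_1)_{[\theta]}$ and $\varepsilon > 0$, and choose $f \in \mathcal{F}(X_0,X_1)$ with $f(\theta) = x$ and $\|f\|_{\mathcal{F}(X_0,X_1)} \le \|x\|_{(X_0,X_1)_{[\theta]}} + \varepsilon$. The idea is to transport $f$ through $T$ and read off an admissible function for $Tx$ living in $\mathcal{F}(Y_0,Y_1)$.

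Next I would verify that $g := T \circ f$ lies in $\mathcal{F}(Y_0,Y_1)$. Since $T : X_j \to Y_j$ is bounded for $j = 0,1$, the operator $T$ is bounded from $X_0 + X_1$ to $Y_0 + Y_1$ in the respective sum norms; hence $g$ is bounded and continuous on the closed strip $\{0 \le \Re z \le 1\}$, analytic in its interior (the composition of a vector-valued analytic function with a bounded linear map is analytic), and satisfies $g(it) = T f(it) \in Y_0$ and $g(1+it) = T f(1+it) \in Y_1$. Because $g(\theta) = Tx$, the definition of the interpolation norm gives, essentially tautologically, $\|Tx\|_{(Y_0,Y_1)_{[\theta]}} \le \|g\|_{\mathcal{F}(Y_0,Y_1)}$. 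Combined with $\|g(it)\|_{Y_0} \le \|T\|_{X_0 \to Y_0}\|f\|_{\mathcal{F}}$ and $\|g(1+it)\|_{Y_1} \le \|T\|_{X_1 \to Y_1}\|f\|_{\mathcal{F}}$, this already produces the bound $\max\big(\|T\|_{X_0 \to Y_0},\,\|T\|_{X_1 \to Y_1}\big)\|f\|_{\mathcal{F}}$; but that is too weak, and the real task is to upgrade the maximum to a geometric mean.

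That upgrade is the standard rescaling trick, and I expect it to be the only genuinely nontrivial step. Writing $M_j := \|T\|_{X_j \to Y_j}$ and assuming first $M_0, M_1 > 0$, I would replace $g$ by $g_\lambda(z) := \lambda^{\,z-\theta}\,T f(z)$ with $\lambda := M_0/M_1 > 0$; multiplication by the entire scalar function $z \mapsto \lambda^{\,z-\theta} = e^{(z-\theta)\log\lambda}$, which is bounded on the strip, keeps $g_\lambda$ in $\mathcal{F}(Y_0,Y_1)$ and preserves the value at $\theta$, so $g_\lambda(\theta) = Tx$. Since $|\lambda^{\,z-\theta}| = \lambda^{\Re z - \theta}$ equals $\lambda^{-\theta}$ on $\Re z = 0$ and $\lambda^{1-\theta}$ on $\Re z = 1$, one gets $\|g_\lambda(it)\|_{Y_0} \le \lambda^{-\theta} M_0 \|f\|_{\mathcal{F}}$ and $\|g_\lambda(1+it)\|_{Y_1} \le \lambda^{1-\theta} M_1 \|f\|_{\mathcal{F}}$, and with this choice of $\lambda$ both right-hand sides equal $M_0^{\,1-\theta} M_1^{\,\theta}\|f\|_{\mathcal{F}}$. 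Hence $\|Tx\|_{(Y_0,Y_1)_{[\theta]}} \le M_0^{\,1-\theta} M_1^{\,\theta}\big(\|x\|_{(X_0,X_1)_{[\theta]}} + \varepsilon\big)$, and letting $\varepsilon \downarrow 0$ yields the claim. The degenerate cases $M_0 = 0$ or $M_1 = 0$ follow from the same computation after replacing the vanishing norm by some $\delta > 0$ and letting $\delta \downarrow 0$ (equivalently, sending $\lambda \to \infty$ or $\lambda \to 0$). Everything outside the rescaling step is routine bookkeeping: closure of $\mathcal{F}$ under composition with $T$ and under multiplication by bounded scalar analytic functions, and the boundedness of $T$ on the sum space.
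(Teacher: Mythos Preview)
The paper does not prove this statement at all; it is quoted as a background result from Bergh--L\"ofstr\"om \cite[Theorem~4.1.2]{Bergh76} and used as a black box. Your proposal is a correct reconstruction of the standard proof found in that reference: pick a near-optimal $f\in\mathcal F(X_0,X_1)$, push it through $T$, and balance the two boundary bounds with the scalar multiplier $\lambda^{z-\theta}$ to convert the maximum into a geometric mean. So there is nothing to compare---you have simply supplied the textbook argument that the paper omits by citation.
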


We also need the following Riesz--Thorin interpolation theorem for \(L^p\) spaces; see \cite[Theorem~5.1.1]{Bergh76}.

\begin{thm}[\cite{Bergh76}]\label{thm:RT_interpolation1}
Assume that \(p_0 \ge 1\), \(p_1 \ge 1\) and \(0 < \theta < 1\). Then
\[ 
(L^{p_0}, L^{p_1})_{[\theta]} = L^p \quad \text{(with equal norms)},
\] 
if
\[
\frac{1}{p} = \frac{1-\theta}{p_0} + \frac{\theta}{p_1}.
\]
\end{thm}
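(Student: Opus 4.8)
The plan is to establish the isometric identity $(L_{p_0},L_{p_1})_{[\theta]}=L_p$ by proving the two continuous inclusions $(L_{p_0},L_{p_1})_{[\theta]}\hookrightarrow L_p$ and $L_p\hookrightarrow(L_{p_0},L_{p_1})_{[\theta]}$, each with norm at most $1$. Throughout I would use the conjugate relation $\frac1{p'}=\frac{1-\theta}{p_0'}+\frac{\theta}{p_1'}$, which follows from $\frac1p=\frac{1-\theta}{p_0}+\frac{\theta}{p_1}$, and I would first dispose of the degenerate case $p_0=p_1$ and set aside the case where some $p_j=\infty$, which only needs minor notational adjustments (replacing an $L_\infty$ boundary bound by a pointwise supremum bound).

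For the inclusion $L_p\hookrightarrow(L_{p_0},L_{p_1})_{[\theta]}$, given $a\in L_p$ I would produce an explicit admissible function realizing $a$ at $\theta$. Writing $\operatorname{sgn}(a)=a/|a|$ on $\{a\neq0\}$ and $0$ elsewhere, set $f(z):=\lambda^{\,z-\theta}\,|a|^{\,p\left(\frac{1-z}{p_0}+\frac{z}{p_1}\right)}\operatorname{sgn}(a)$ on the strip $S=\{0\le\Re z\le1\}$, where $\lambda=\|a\|_{L_p}^{p/p_0-p/p_1}$ is a scalar inserted to balance the two boundary norms. One checks $f(\theta)=|a|^{p/p}\operatorname{sgn}(a)=a$; that $z\mapsto f(z)$ is analytic in the interior of $S$ and continuous up to the boundary as a map into $L_{p_0}+L_{p_1}$ (first for simple $a$, then by an approximation argument for general $a\in L_p$); and that pointwise $|f(it)|=\|a\|_{L_p}^{-\theta(p/p_0-p/p_1)}|a|^{p/p_0}$ and $|f(1+it)|=\|a\|_{L_p}^{(1-\theta)(p/p_0-p/p_1)}|a|^{p/p_1}$, so that a direct computation gives $\|f(it)\|_{L_{p_0}}=\|f(1+it)\|_{L_{p_1}}=\|a\|_{L_p}$. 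Hence $\|f\|_{\mathcal F}=\|a\|_{L_p}$ and therefore $\|a\|_{[\theta]}\le\|a\|_{L_p}$.

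For the reverse inclusion $(L_{p_0},L_{p_1})_{[\theta]}\hookrightarrow L_p$, fix $a=f(\theta)$ for some $f\in\mathcal F(L_{p_0},L_{p_1})$ and exploit the duality $\|a\|_{L_p}=\sup\{\,|\int ag|:\ g\text{ simple},\ \|g\|_{L_{p'}}\le1\,\}$. For such a $g$ form the dual analytic family $g_z:=|g|^{\,p'\left(\frac{1-z}{p_0'}+\frac{z}{p_1'}\right)}\operatorname{sgn}(g)$, and set $F(z):=\int f(z)\,g_z$. Then $F$ is analytic on the open strip and continuous and \emph{bounded} on $S$ — here the hypothesis that $g$ is simple (finitely many values on sets of finite measure) is precisely what guarantees boundedness — with $F(\theta)=\int ag$. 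Hölder's inequality together with $\|g_{it}\|_{L_{p_0'}}=\|g\|_{L_{p'}}^{p'/p_0'}\le1$ (and the analogous bound on $\Re z=1$) gives $|F(it)|\le\|f\|_{\mathcal F}$ and $|F(1+it)|\le\|f\|_{\mathcal F}$. Applying the Hadamard three-lines theorem to $F$ on $S$ yields $|F(\theta)|\le\|f\|_{\mathcal F}$; taking the supremum over $g$ and the infimum over admissible $f$ gives $\|a\|_{L_p}\le\|a\|_{[\theta]}$, completing the isometric identification.

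I expect the main obstacle to be the admissibility bookkeeping rather than any deep idea: one must verify carefully that the explicit families $f(z)$ and $g_z$ genuinely define bounded continuous $(L_{p_0}+L_{p_1})$-valued (resp. scalar) functions that are analytic in the interior of $S$, which forces the two-step strategy of treating simple functions first and then passing to the limit, and one must ensure the hypotheses of the three-lines theorem hold, which is exactly why $g$ is taken simple. The endpoint cases $p_j=\infty$ require only routine modifications, since the pointwise estimates used above survive verbatim when an $L_\infty$-norm replaces an $L_{p_j}$-norm.
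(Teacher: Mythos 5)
The paper does not prove this statement: it is the classical Riesz--Thorin theorem in the complex-interpolation framework, cited verbatim from \cite[Theorem~5.1.1]{Bergh76}, so there is no in-paper argument to compare against. Your proposal reproduces the standard proof from that reference (explicit analytic family $f(z)=\lambda^{z-\theta}|a|^{p(\frac{1-z}{p_0}+\frac{z}{p_1})}\operatorname{sgn}(a)$ for the inclusion $L_p\hookrightarrow(L_{p_0},L_{p_1})_{[\theta]}$, and a dual analytic family plus the three-lines theorem for the reverse inclusion) and is correct, including the normalization $\lambda=\|a\|_{L_p}^{p/p_0-p/p_1}$ and the reduction to simple functions to ensure analyticity, boundedness, and the applicability of Hadamard's three-lines theorem.
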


\begin{remark}\label{rmk:RT_interpolation1}
For certain noncommutative \(L^p\) spaces, such as the Schatten \(p\)-classes \(S_p\), a Riesz--Thorin interpolation theorem is also known to hold; see \cite[Eq.~(2.1)]{Pisier03}, since matrix algebras are semifinite von Neumann algebras.  
See also \cite[Theorem~13.1]{GohbergKrein69}, \cite[Theorems~2.9--2.10]{Simon05}, and \cite[Remark~1, p.~23]{Simon05}.
\end{remark}

\section{Revisiting  the Schoenberg type inequality of order 1 and Pereira's result}\label{sec:new proof}

In this section, we first give a new proof of the Schoenberg type inequality of order~1, namely~\eqref{eq:p=1}. We then use complex interpolation to provide a new proof of Pereira's result~\eqref{eq:Pereira1}, together with a partial refinement under the additional assumption that the centroid of the zeros is at the origin.


\subsection{A new proof of the Schoenberg type inequality of order~1}\label{sec:p=1}
In~\cite[Theorem~3.4]{Tang25}, Tang established the Schoenberg type inequality of order~1, namely~\eqref{eq:p=1}. 
In this subsection we obtain a strengthened version of this inequality, from which a new proof of~\eqref{eq:p=1} follows.


\begin{thm}\label{thm:p1}
Let \(n \ge 2\) and \(z = (z_1,\ldots,z_n)^{\top} \in \mathbb{C}^n\) satisfy \(\sum_{j=1}^n z_j = 0\). Set \(Q = I - \frac{1}{n}J\), and define $T_0(z) = Q \mathrm{diag}(z) Q$. Then
\[
\|T_0(z)\|_{S_1} \le \sqrt{\frac{n-2}{n}} \|z\|_{\ell^1}.
\]
\end{thm}

Assuming Theorem~\ref{thm:p1}, we can now derive inequality~\eqref{eq:p=1}.

\begin{proof}[Proof of inequality~\eqref{eq:p=1} assuming Theorem~\ref{thm:p1}]
Let \( z_1, z_2, \ldots, z_n \) be the zeros of \( p(z) \), and let \( w_1, w_2, \ldots, w_{n-1} \) be its critical points. Set $Q:=I-\frac1nJ$, $D:=\mathrm{diag}(z_1,\dots,z_n)$ and $A:=QDQ$. It follows from Lemma~\ref{lem: zp'(z)} that the spectrum of \(A\) is 
\begin{equation}\label{eq:specA}
\operatorname{spec}(A)=\{w_1,\dots,w_{n-1},0\}.
\end{equation} By Lemma~\ref{lem:weyl}, for every $p\ge1$, $\sum_{i=1}^n |\lambda_i(A)|^p\le\sum_{i=1}^n \sigma_i(A)^p=\|A\|_{S_p}^p$.
Using \eqref{eq:specA} we get
\begin{equation}\label{eq:eval-to-sval}
   \sum_{k=1}^{n-1} |w_k|^p \le \|A\|_{S_p}^p .
\end{equation}
Taking $p=1$ in \eqref{eq:eval-to-sval} and applying Theorem~\ref{thm:p1} to $z=(z_1,\dots,z_n)^\top$ yields
\[
\sum_{k=1}^{n-1} |w_k|
\le \|A\|_{S_1}
\le \sqrt{\frac{n-2}{n}} \|z\|_{\ell^1},
\]
which is precisely inequality~\eqref{eq:p=1}. 
\end{proof}



Before proving Theorem~\ref{thm:p1}, we first establish the following proposition. The proof of Proposition~\ref{prop:elementary_sym1} adapts ideas from the proof of \cite[Theorem~3.1]{Tang25}. For the basic theory of majorization, we refer the reader to~\cite{MOA11}. 

\begin{prop}\label{prop:elementary_sym1}
Let \(D=\mathrm{diag}(z_1,\ldots,z_n)\) with \(n\ge 2\), and set \(A=QDQ\) where \(Q=I-\frac{1}{n}J\). Denote by \(\sigma_1, \ldots, \sigma_{n-1}, 0\) the singular values of \(A\), arranged in nonincreasing order of their moduli. Then
\[
e_k\left(\sigma_1,\ldots,\sigma_{n-1}\right) \le \frac{n-k}{n}  e_k\left(|z_1|,\ldots,|z_n|\right),
\quad k=1,\ldots,n-1.
\]
\end{prop}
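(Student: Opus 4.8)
My plan is to deduce the bound on the elementary symmetric functions of the singular values of $A$ from a trace identity, by passing to $k$th compound (exterior) matrices and then evaluating that trace in closed form, exploiting the special structure $Q = I - \frac1n J$. First, since $Q$ is an orthogonal projection of rank $n-1$, the matrix $A = QDQ$ has rank at most $n-1$, so $\sigma_n(A) = 0$ and $e_k(\sigma_1,\dots,\sigma_{n-1}) = e_k(\sigma_1,\dots,\sigma_n)$ for $k \le n-1$. The singular values of the $k$th compound matrix $\Lambda^k A$ are exactly the products $\prod_{i\in S}\sigma_i(A)$ over $k$-subsets $S \subseteq \{1,\dots,n\}$, so $e_k(\sigma_1,\dots,\sigma_n) = \|\Lambda^k A\|_{S_1}$. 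By multiplicativity of $\Lambda^k$ one has $\Lambda^k A = (\Lambda^k Q)(\Lambda^k D)(\Lambda^k Q)$, where $\Lambda^k Q$ is again an orthogonal projection (as $Q^* = Q = Q^2$) and $\Lambda^k D$ is diagonal with entries $\prod_{i\in S}z_i$, whence $|\Lambda^k D| = \Lambda^k|D|$.

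Next I would pass from partial products to a trace. Writing $\Lambda^k A = (\Lambda^k Q)^*(\Lambda^k D)(\Lambda^k Q)$ and applying Lemma~\ref{lem:XDx_vs_absD} to the diagonal matrix $\Lambda^k D$ with $X = \Lambda^k Q$ gives $\prod_{j=1}^m \sigma_j(\Lambda^k A) \le \prod_{j=1}^m \sigma_j\big((\Lambda^k Q)\,|\Lambda^k D|\,(\Lambda^k Q)\big)$ for every $m$; combined with the standard fact that weak log-majorization implies weak majorization (see~\cite{MOA11}), this gives $\|\Lambda^k A\|_{S_1} \le \big\|(\Lambda^k Q)\,|\Lambda^k D|\,(\Lambda^k Q)\big\|_{S_1}$. (Equivalently, the same bound follows directly from the Schatten Cauchy--Schwarz inequality $\|XY\|_{S_1} \le \|X\|_{S_2}\,\|Y\|_{S_2}$ applied to the factorization $\Lambda^k A = \big((\Lambda^k Q)\,U\,|\Lambda^k D|^{1/2}\big)\big(|\Lambda^k D|^{1/2}\,(\Lambda^k Q)\big)$, where $\Lambda^k D = U\,|\Lambda^k D|$ is the polar decomposition.) Now $(\Lambda^k Q)\,|\Lambda^k D|\,(\Lambda^k Q)$ is positive semidefinite, so its $S_1$-norm equals its trace; using $(\Lambda^k Q)^2 = \Lambda^k Q$, $|\Lambda^k D| = \Lambda^k|D|$, and multiplicativity once more, this trace equals $\operatorname{tr}\big(\Lambda^k(Q|D|)\big) = e_k\big(\lambda_1(Q|D|),\dots,\lambda_n(Q|D|)\big)$, the last equality being the usual identification of the trace of a $k$th compound with the $k$th elementary symmetric function of the eigenvalues.

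It then remains to compute $e_k$ of the eigenvalues of $Q|D|$. Writing $Q|D| = |D| - \frac1n J|D|$ exhibits $Q|D|$ as a rank-one perturbation of the diagonal matrix $|D|$, so the matrix determinant lemma (after a short computation) gives $\det(\lambda I - Q|D|) = \frac{\lambda}{n}\,P'(\lambda)$, where $P(\lambda) = \prod_{j=1}^n(\lambda - |z_j|)$. Hence the spectrum of $Q|D|$ consists of $0$ together with the roots of the monic polynomial $\frac1n P'(\lambda) = \sum_{k=0}^{n-1}(-1)^k \frac{n-k}{n}\,e_k(|z_1|,\dots,|z_n|)\,\lambda^{n-1-k}$, and reading off Vieta's formulas yields $e_k\big(\lambda_1(Q|D|),\dots,\lambda_n(Q|D|)\big) = \frac{n-k}{n}\,e_k(|z_1|,\dots,|z_n|)$ for $k \le n-1$. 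Chaining the three steps gives the proposition.

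The main obstacle is not any single step but getting the \emph{sharp} constant $\frac{n-k}{n}$: the reduction and the majorization/Cauchy--Schwarz bound are routine, but the eigenvalue computation must use $Q = I - \frac1n J$ in an essential way. Cruder estimates --- applying $\|XY\|_{S_1} \le \|X\|_{S_2}\|Y\|_{S_2}$ to $A$ itself, or bounding through $\|A\|_{S_2}$ --- yield only the weaker constant $\frac{n-1}{n}$, which is not strong enough to deliver Theorem~\ref{thm:p1} (there one needs the $k=2$ case of the proposition combined with the identity $\|A\|_{S_2}^2 = \frac{n-2}{n}\|z\|_{\ell^2}^2$).
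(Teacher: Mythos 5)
Your proof is correct, and it follows a genuinely different route from the paper's, even though both hinge on the same Lemma~\ref{lem:XDx_vs_absD} and on the same spectral identification for the auxiliary matrix built from $|D|$. The paper applies Lemma~\ref{lem:XDx_vs_absD} directly to $A=QDQ$, obtaining the weak log-majorization $\prod_{j\le k}\sigma_j(A)\le\prod_{j\le k}\xi_j$ where $\xi_j$ are the singular values of $Q|D|Q$ (identified as the critical points of $\prod(z-|z_j|)$ via \cite[Lemma~2.1]{Tang25}); to convert this into a bound on $e_k$, it invokes Schur-convexity of $e_k(e^{t_1},\dots,e^{t_{n-1}})$ from \cite{Rov12}, which requires a small modification-and-continuity argument to pass from weak log-majorization to genuine majorization. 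You instead pass to the $k$th compound $\Lambda^k$, which converts $e_k(\sigma)$ directly into a trace norm $\|\Lambda^k A\|_{S_1}$; the bound $\|\Lambda^k A\|_{S_1}\le\operatorname{tr}\big(\Lambda^k(Q|D|)\big)$ then follows either by applying Lemma~\ref{lem:XDx_vs_absD} at the compound level together with the standard implication ``weak log-majorization $\Rightarrow$ weak majorization'', or, more cleanly, by a one-line Schatten Cauchy--Schwarz applied to the factorization of $\Lambda^k A$ through the polar decomposition of $\Lambda^k D$. (In the Cauchy--Schwarz route, the computation $\|(\Lambda^k Q)U|\Lambda^k D|^{1/2}\|_{S_2}^2=\operatorname{tr}\big((\Lambda^k Q)|\Lambda^k D|(\Lambda^k Q)\big)$ uses that $U$ is a diagonal unitary commuting with the diagonal matrix $|\Lambda^k D|$, so $U|\Lambda^k D|U^*=|\Lambda^k D|$; you might spell this out.) Finally, you re-derive the identity $e_k$ of the spectrum of $Q|D|$ equals $\frac{n-k}{n}\,e_k(|z|)$ from scratch via the matrix determinant lemma, whereas the paper cites \cite[Lemma~2.1]{Tang25} and compares coefficients of $q$ and $q'$; these are the same computation packaged differently. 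Your compound-matrix route is arguably more conceptual and avoids the fiddly vector-modification step; the paper's route is more elementary in the sense of not invoking exterior powers. Both are sound.
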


\begin{proof}
Let \(q(z)\) be the polynomial with zeros \(|z_j|\), \(j=1,\ldots,n\), and critical points \(\xi_j\), \(j=1,\ldots,n-1\), arranged in nonincreasing order of their moduli. By Lemma~\ref{lem:XDx_vs_absD}, for every \(k=1,\dots,n\) we have
\[
\prod_{j=1}^k \sigma_j \left(Q D Q\right)
 \le
\prod_{j=1}^k \sigma_j \left(Q |D| Q\right).
\]
By Lemma \ref{lem: zp'(z)}, the eigenvalues of \(Q |D| Q\) coincide with its singular values and are given by \(\xi_1,\ldots,\xi_{n-1},0\). Therefore,
\[
\prod_{j=1}^k \sigma_j
 \le
\prod_{j=1}^k \xi_j, \quad k=1,\dots,n-1.
\]

Without loss of generality, we assume \(|\sigma_{n-1}|>0\). The general case follows by a continuity argument. We may choose an entrywise positive vector \(\boldsymbol{x}=(x_1, \ldots, x_{n-1})\) with \(x_j=\xi_j\) for \(j=1, \ldots, n-2\) and \(\prod_{j=1}^{n-1}\sigma_j= \prod_{j=1}^{n-1}x_j\). Then \(\left(\log \sigma_1, \ldots, \log \sigma_{n-1} \right)\) is majorized by \(\left(\log x_1, \ldots, \log x_{n-1}\right)\). By \cite[Theorem~9]{Rov12}, the function \(e_{k}(e^{t_1}, \ldots, e^{t_{n-1}})\) is Schur-convex over \(t_i\in  \mathbb{R}\), hence
\[
e_k \left(\sigma_1,\ldots,\sigma_{n-1}\right)\le e_k \left(x_1, \ldots, x_{n-1}\right).
\]
Since \(e_k:\mathbb{R}^{n-1}_+\to \mathbb{R}\) is monotone increasing in each variable, we have \(e_k(x_1, \ldots, x_{n-1})\le e_k(\xi_1, \ldots, \xi_{n-1})\), as clearly \(x_{n-1}\le \xi_{n-1}\). Now observe that
\[
q(z) = \prod_{j=1}^n \big(z - |z_j|\big) 
= z^n + \sum_{k=1}^n (-1)^k e_k(|\boldsymbol{z}|)\, z^{n-k},
\]
where \(\boldsymbol{z} = (z_1,\ldots,z_n)\). Differentiating gives
\[
q'(z) = nz^{n-1} + \sum_{k=1}^{n-1} (n-k)(-1)^k e_k(|\boldsymbol{z}|) z^{n-1-k}.
\]
On the other hand,
\[
q'(z) = n \prod_{j=1}^{n-1} \big(z - \xi_j\big)
= nz^{n-1} + n \sum_{k=1}^{n-1} (-1)^k e_k(\boldsymbol{\xi})\, z^{\,n-1-k},
\]
where \(\boldsymbol{\xi} = (\xi_1,\ldots,\xi_{n-1})\). Comparing coefficients yields
\[
e_k(\boldsymbol{\xi}) = \frac{n-k}{n} e_k(|\boldsymbol{z}|), \qquad k = 1, \ldots, n-1.
\]
Therefore,
\[
e_k(\sigma_1,\ldots,\sigma_{n-1})  \le e_k(\boldsymbol{\xi}) = \frac{n-k}{n} e_k(|\boldsymbol{z}|), \qquad k = 1, \ldots, n-1.
\]
This completes the proof.
\end{proof}
We are now ready to present the following.
\begin{proof}[Proof of Theorem~\ref{thm:p1}]Denote by \(\sigma_1, \ldots, \sigma_{n-1}, 0\) the singular values of \(T_0(z)\), arranged in nonincreasing order of their moduli. It suffices to prove that
\[
\left(\sum_{k=1}^{n-1}\sigma_k \right)^2  \leq   \frac{n-2}{n}  \left(\sum_{j=1}^n\left|z_j\right|\right)^2,
\]
which expands as
\begin{equation}\label{eq:p1_eqeq1}
\sum_{k=1}^{n-1}\sigma_k^2
+ 2  \sum_{1\leq i<j\leq n-1}\sigma_i \sigma_j
 \leq \frac{n-2}{n} \sum_{k=1}^n\left|z_k\right|^2
+ \frac{2(n-2)}{n}\sum_{1\leq i<j\leq n}\left|z_i z_j\right|.
\end{equation} Since $\sum_{j=1}^n z_j = 0$, it is clear that $J \mathrm{diag}(z) J = 0$. A direct computation using the fact that $Q$ is an orthogonal projection and that, for any diagonal matrix $E$, one has $\operatorname{tr}(EJ) = \operatorname{tr}(E)$, gives
\begin{equation}\label{eq:ppppppppp2}
\begin{aligned}
   \|T_0(z)\|_{S_2}^2
   &= \tr\left(Q \mathrm{diag}(z) Q Q \mathrm{diag}(z)^* Q\right) = \tr\left(Q \mathrm{diag}(z) Q \mathrm{diag}(z)^* \right)\\
   &= \tr\left(|\mathrm{diag}(z)|^2-\frac{1}{n}\mathrm{diag}(z) J \mathrm{diag}(z)^* -\frac{1}{n}J|\mathrm{diag}(z)|^2\right)
     = \frac{n-2}{n} \|z\|_{\ell^2}^2 .
\end{aligned}
\end{equation}From \eqref{eq:ppppppppp2} we have
\begin{equation}\label{eq:p1_eqeq2}
\sum_{k=1}^{n-1}\sigma_k^2
= \frac{n-2}{n} \sum_{k=1}^n\left|z_k\right|^2.
\end{equation}
Moreover, taking the case \(k=2\) in Proposition~\ref{prop:elementary_sym1} yields
\begin{equation}\label{eq:p1_eqeq3}
\sum_{1\leq i<j\leq n-1}\sigma_i \sigma_j
 \leq \frac{n-2}{n}\sum_{1\leq i<j\leq n}\left|z_i z_j\right|.
\end{equation} Combining \eqref{eq:p1_eqeq2} and \eqref{eq:p1_eqeq3} yields \eqref{eq:p1_eqeq1}, thereby completing the proof.
\end{proof}

\begin{remark}
   The Schoenberg type inequality of order $1$ proved in \cite[Theorem~3.4]{Tang25} is in fact an immediate corollary of Malamud~\cite[Theorem~4.10]{Mal04} (more precisely, its special case \cite[Theorem~3.1]{Tang25}) together with the quadratic Schoenberg inequality~\eqref{eq:schoenberg}. In contrast, Theorem~\ref{thm:p1} in the present paper yields a conclusion that goes beyond the scope of Malamud~\cite[Theorem~4.10]{Mal04}.
\end{remark}

\subsection{A new proof of Pereira's result}
Let \( z_1, z_2, \ldots, z_n \) be the zeros of \( p(z) \), and let \( w_1, w_2, \ldots, w_{n-1} \) be its critical points. Set $Q:=I-\frac1nJ$, $D:=\mathrm{diag}(z_1,\dots,z_n)$ and $A:=QDQ$. 


Define the linear map
$T: \mathbb C^{n}\to \mathbb C^{n\times n}$ by $T(z)=Q \mathrm{diag}(z) Q$.
We have two endpoint estimates:

(i) $p=\infty$: since $Q$ is an orthogonal projection, we know that
\begin{equation}\label{eq:pinfty}
   \|T(z)\|_{S_\infty} = \|Q \mathrm{diag}(z) Q\|_{S_\infty} \le \|\mathrm{diag}(z)\|_{S_\infty}=\|z\|_{\ell^\infty}.
\end{equation}

(ii) $p = 2$:  a direct computation using the fact that $Q$ is an orthogonal projection and that, for any diagonal matrix $E$, one has $\operatorname{tr}(EJ) = \operatorname{tr}(E)$, gives\begin{align}\label{eq:p2,n-1/n}
\|T(z)\|_{S_2}^2&= \tr\left(Q \mathrm{diag}(z) Q Q \mathrm{diag}(z)^* Q\right) = \tr\left(Q \mathrm{diag}(z) Q \mathrm{diag}(z)^* \right)\nonumber\nonumber\\
		&=\tr\left(|\mathrm{diag}(z)|^2-\frac{1}{n}\mathrm{diag}(z) J\mathrm{diag}(z)^* -\frac{1}{n}J |\mathrm{diag}(z)|^2+\frac{1}{n^2}J\mathrm{diag}(z)J\mathrm{diag}(z)^*\right)\nonumber\\
		&=\frac{n-2}{n} \|z\|_{\ell^2}^2+\left|\frac{\sum_{j=1}^nz_j}{n}\right|^2\le \frac{n-1}{n} \|z\|_{\ell^2}^2.
\end{align}By the Riesz--Thorin interpolation theorem for $L^p$ spaces (see Theorem~\ref{thm:RT_interpolation1}), we have 
\[
\big(\ell^\infty(\mathbb C^{n}), \ell^2(\mathbb C^{n})\big)_{[\theta]} = \ell^p(\mathbb C^{n}) \quad \text{(with equal norms)},
\]
where $\theta = \frac{2}{p}$ and $p \in (2,\infty)$. Since the Riesz--Thorin interpolation theorem holds for the Schatten \(p\)-classes (see Remark~\ref{rmk:RT_interpolation1}), we also have
\[
\big(S_\infty, S_2\big)_{[\theta]} = S_p \quad \text{(with equal norms)},
\]
with $\theta = \frac{2}{p}$ for $p \in (2,\infty)$.  From \eqref{eq:pinfty}--\eqref{eq:p2,n-1/n}, we apply the abstract complex interpolation theorem (see Theorem~\ref{thm:complex-interpolation}) to the two compatible Banach couples $(\ell^\infty(\mathbb C^{n}),\ell^2(\mathbb C^{n}))$ and $(S_\infty,S_2)$ and obtain\[
\|T\|_{\ell^p(\mathbb C^{n}) \to S_p} \le \|T\|_{\ell^\infty(\mathbb C^{n}) \to S_\infty}^{\,1-\theta} \, \|T\|_{\ell^2(\mathbb C^{n}) \to S_2}^{\,\theta} \leq 1^{1-\frac{2}{p}} \cdot \left(\sqrt{\frac{n-1}{n}}\right)^{\frac{2}{p}} = \left(\frac{n-1}{n}\right)^{\frac{1}{p}}.
\] This means
\[
   \|T(z)\|_{S_p}\le \left(\frac{n-1}{n}\right)^{\frac{1}{p}}\|z\|_{\ell^p}.
\]
Taking $z=(z_1,\dots,z_n)$ and recalling $A=T(z)$ yields
$\|A\|_{S_p}^p\le \frac{n-1}{n}\sum_{j=1}^n|z_j|^p$.
Combine this with \eqref{eq:eval-to-sval} to finish the proof of the case $p\ge 2$.

Similar to the case \(p \ge 2\), we begin by establishing the two endpoint estimates corresponding to $p=1$ and $p=2$, namely
\[
   \|T(z)\|_{S_1}\le c^2\,\|z\|_{\ell^1},\qquad
   \|T(z)\|_{S_2}\le c\,\|z\|_{\ell^2},
   \qquad
   c:=\sqrt{\frac{n-1}{n}},
\]
where the first inequality follows from Proposition~\ref{prop:elementary_sym1} (with \(k=1\)) and the second from~\eqref{eq:p2,n-1/n}. By the Riesz--Thorin interpolation theorem for $L^p$ spaces and for Schatten $p$-classes, we know that for $p \in (1,2)$, we have 
\[
\left(\ell^1(\mathbb C^{n}),\ell^2(\mathbb C^{n})\right)_{[\theta]} = \ell^p(\mathbb C^{n}), 
\qquad 
\left(S_1,S_2\right)_{[\theta]} = S_p \quad \text{(with equal norms)},
\]
where $1/p = (1-\theta)/1 + \theta/2$. Hence, we apply Theorem~\ref{thm:complex-interpolation} to the two compatible Banach couples $(\ell^1(\mathbb C^{n}),\ell^2(\mathbb C^{n}))$ and $(S_1,S_2)$ and obtain\[
   \|T(z)\|_{S_p}\ \le\ c^{\,2-2\theta}\,c^{\,\theta}\ \|z\|_{\ell^p}
   \ =\ c^{2-\theta}\,\|z\|_{\ell^p}
   \ =\ \left(\frac{n-1}{n}\right)^{\frac{1}{p}}\|z\|_{\ell^p}.
\]
Raising both sides to the $p$th power and using the eigenvalue--singular
value comparison \eqref{eq:eval-to-sval} yields
\[
   \sum_{k=1}^{n-1}|w_k|^p
   \ \le\ \|T(z)\|_{S_p}^p
   \ \le\ \frac{n-1}{n}\,\sum_{j=1}^n |z_j|^p,
\]
as desired.\qed

 \subsection{The case where the centroid is at the origin}

Motivated by Problem~\ref{prob:kt} and Pereira's result~\eqref{eq:Pereira1}, it is natural to ask the following question. For a general exponent $p \ge 1$, under the additional assumption that the centroid of the zeros of $p(z)$ is at the origin, can one improve the constant in the inequality relating the $\ell^p$-norm of the critical points to the $\ell^p$-norm of the zeros?  
Inequality~\eqref{eq:p=1} provides the sharp Schoenberg type inequality in the case $p=1$.  
For $1<p<2$, we do not currently have a satisfactory conjecture for the sharp inequality.  
However, for $p \ge 2$ we propose Conjectures~\ref{conj: sum z_j=0} and~\ref{conj:two-point-odd12}.  
The cases of these conjectures that we are presently able to prove are recorded in Remark~\ref{remark:two_conj_solved_case_11}.  
In this subsection we use complex interpolation to obtain a partial refinement of Pereira's result~\eqref{eq:Pereira1} under the additional assumption that the centroid of the zeros is at the origin. 

A natural approach to Conjecture~\ref{conj: sum z_j=0} is to consider the interpolation of the subspace
\[
V := \left\{ z=(z_1,\dots,z_n) \in \mathbb{C}^n : \sum_{j=1}^n z_j = 0 \right\}
\]
of $\mathbb{C}^n$.  
According to Triebel's book~\cite[p.~118, Theorem~1]{Tri78}, if $\{A_0, A_1\}$ is an interpolation couple and $B$ is a complemented subspace of $A_0 + A_1$ with projection $P$ belonging to $L(\{A_0, A_1\}, \{A_0, A_1\})$, and if $F$ is an arbitrary interpolation functor, then $\{A_0 \cap B, A_1 \cap B\}$ is also an interpolation couple and
\[
F\bigl(\{A_0 \cap B, A_1 \cap B\}\bigr)
= F\bigl(\{A_0, A_1\}\bigr) \cap B.
\]
However, this equality holds only up to equivalent norms, rather than with equal norms.  
In other words, $(\ell^\infty(V), \ell^2(V))_{[\theta]}$ coincides with $\ell^p(V)$ as a vector space, and the corresponding norms are equivalent (in general not equal when $n \ge 3$).

As a partial result towards Conjecture~\ref{conj: sum z_j=0}, we can prove the following weaker estimate.




\begin{thm}\label{thm:weaker result for p>= 2}
	For $n \geq 4$, let \( z_1, z_2, \ldots, z_n \) be the zeros of \( p(z) \), and let \( w_1, w_2, \ldots, w_{n-1} \) be its critical points. Assume further that $\sum_{j=1}^n z_j=0$. Then for every $p\ge 2$,
	\[
	\sum_{k=1}^{n-1} |w_k|^p \le \left(\frac{2(n-1)}{n} \right) ^{p-2}  \frac{n-2}{n} \sum_{j=1}^n |z_j|^p .
	\]
\end{thm}
\begin{proof}
	Let $V:=\{z\in\mathbb C^n:\sum_{j=1}^n z_j=0\}$ and define the linear map
	$T_0: V\to \mathbb C^{n\times n}$ by $T_0(z)=Q \mathrm{diag}(z) Q$ and the orthogonal projection: $P_V: \mathbb{C}^n\to V$ by $P_V(z)=Qz$. Let $\widehat{T}:= T_0\circ P_V$ map $\mathbb{C}^n$ to $ \mathbb C^{n\times n}$. Clearly, $\widehat{T}\mid_V=T_0$. Set $G(z)=\frac{1}{n}\sum_{j=1}^n z_j$,
	we have two endpoint estimates:
	
	(i) $p=\infty$: since $Q$ is an orthogonal projection, we know that
	\begin{align}\label{eq:pinfty,0}
		\|\widehat{T}(z)\|_{S_\infty} &= \|Q \mathrm{diag}(z-G(z)\mathbf{1}) Q\|_{S_\infty} \le \|\mathrm{diag}(z-G(z)\mathbf{1})\|_{S_\infty}\nonumber\\
		&=\|z-G(z)\mathbf{1}\|_{\ell^\infty}\le \max_{1\le i\le n} \left\{ \frac{n-1}{n}|z_i|+\frac{1}{n}\sum_{j\neq i}|z_j|  \right\}\le \frac{2(n-1)}{n}\|z\|_{\ell^\infty}.
	\end{align}
	
	(ii) $p = 2$:  A direct computation using the fact that $Q$ is an orthogonal projection and that, for any diagonal matrix $E$, one has $\operatorname{tr}(EJ) = \operatorname{tr}(E)$, gives
	\begin{equation}\label{eq:p2}
		\begin{aligned}
			\|\widehat{T}(z)\|_{S_2}^2
			&= \tr\left(Q \mathrm{diag}(z-G(z)\mathbf{1}) Q Q \mathrm{diag}(z-G(z)\mathbf{1})^* Q\right)\\
			&= \frac{n-2}{n} \|z-G(z)\mathbf{1}\|_{\ell^2}^2=\frac{n-2}{n}\left(\|z\|_{\ell^2}^2-n\left|G(z)\right|^2 \right) \\
			&\le \frac{n-2}{n}\|z\|_{\ell^2}^2.
		\end{aligned}
	\end{equation} By the Riesz--Thorin interpolation theorem for $L^p$ spaces (see Theorem~\ref{thm:RT_interpolation1}), we have 
	\[
	\big(\ell^\infty(\mathbb{C}^n), \ell^2(\mathbb{C}^n)\big)_{[\theta]} = \ell^p(\mathbb{C}^n)\quad \text{(with equal norms)},
	\]
	where $\theta = \frac{2}{p}$ and $p \in (2,\infty)$. Since the Riesz--Thorin interpolation theorem holds for the Schatten \(p\)-classes (see Remark~\ref{rmk:RT_interpolation1}), we also have
	\[
	\big(S_\infty, S_2\big)_{[\theta]} = S_p\quad \text{(with equal norms)},
	\]
	with $\theta = \frac{2}{p}$ for $p \in (2,\infty)$. From \eqref{eq:pinfty,0}--\eqref{eq:p2}, we apply the abstract complex interpolation theorem (see Theorem~\ref{thm:complex-interpolation}) to the two compatible Banach couples $(\ell^\infty(\mathbb{C}^n)),\ell^2(\mathbb{C}^n))$ and $(S_\infty,S_2)$ and obtain\[
	\|\widehat{T}\|_{\ell^p(\mathbb{C}^n) \to S_p} \le \|\widehat{T}\|_{\ell^\infty(\mathbb{C}^n) \to S_\infty}^{\,1-\theta} \, \|\widehat{T}\|_{\ell^2(\mathbb{C}^n) \to S_2}^{\,\theta} \leq \left(\frac{2(n-1)}{n} \right) ^{1-\frac{2}{p}} \cdot \left(\sqrt{\frac{n-2}{n}}\right)^{\frac{2}{p}} .
	\] This means
	\[
	\|\widehat{T}(z)\|_{S_p}\le  \left(\frac{2(n-1)}{n} \right) ^{1-\frac{2}{p}}  \left(\frac{n-2}{n}\right)^{\frac{1}{p}}\|z\|_{\ell^p}
	\]
    for $ p> 2$ and $ z\in \mathbb{C}^n$.
	Taking $z=(z_1,\dots,z_n)$ yields
	$$\|T_0(z)\|_{S_p}^p=\|\widehat{T}|_V(z)\|_{S_p}^p\le	\|\widehat{T}(z)\|_{S_p}^p\le \left(\frac{2(n-1)}{n} \right) ^{p-2}  \frac{n-2}{n}\sum_{j=1}^n|z_j|^p.$$
	Recalling $A=T(z)$ in~\eqref{eq:eval-to-sval} to finish the proof of the case for $p\ge 2$.
\end{proof}
\begin{remark}
	Under the assumption $\sum_{j=1}^n z_j=0$ and $p\in \left(2,2+\log\frac{n-1}{n-2}/\log\frac{2(n-1)}{n}\right)$, Theorem~\ref{thm:weaker result for p>= 2} is stronger than Pereira's result~\eqref{eq:Pereira1}.
\end{remark}


Similarly to Theorem~\ref{thm:weaker result for p>= 2}, we can also obtain estimates in the range \(1 \le p \le 2\), although these bounds are non-sharp when \(1 \le p < 2\). 
Before doing so, we first establish the following auxiliary result.

\begin{prop}\label{prop:p=1-endpoint-sharp}
Let
\[
Q := I - \frac{1}{n}J,\qquad 
G(z) := \frac{1}{n}\sum_{j=1}^n z_j,\qquad
\widehat{T}(z) := Q\mathrm{diag}\left(z - G(z)\mathbf{1}\right)Q.
\]Then for every $z\in\mathbb{C}^n$ we have
\begin{equation}\label{eq:sharp-p=1}
  \|\widehat{T}(z)\|_{S_1}
  \;\le\; \frac{2(n-2)}{n}\,\|z\|_{\ell^1}.
\end{equation}
Moreover, the constant $\frac{2(n-2)}{n}$ is best possible, and equality is attained,
for instance, at $z=(1,0,\dots,0)^\top$.
\end{prop}

\begin{proof}
Define
\[
f(z) := \|\widehat{T}(z)\|_{S_1},
\quad z\in\mathbb{C}^n.
\]
Since $\widehat{T}$ is linear and $\|\cdot\|_{S_1}$ is a norm, $f$ is convex and
$1$-homogeneous:
\[
f(\alpha z) = |\alpha|\,f(z),\qquad \alpha\in\mathbb{C},\ z\in\mathbb{C}^n.
\]
Thus, to prove \eqref{eq:sharp-p=1}, it suffices to show that
\[
\sup_{\|z\|_{\ell^1}=1} f(z) = \frac{2(n-2)}{n}.
\]


It is well known that the extreme points of the $\ell^1$-unit ball
\[
B_1 := \{z\in\mathbb{C}^n : \|z\|_{\ell^1}\le 1\}
\]
are precisely the vectors of the form $\lambda e_j$ with $|\lambda|=1$ and
$1\le j\le n$, where $e_j$ denotes the $j$-th standard basis vector in $\mathbb{C}^n$.
By a standard consequence of the Krein--Milman theorem in finite dimensions
(see, e.g., \cite[Theorem~II.3.3]{Barvinok25}), a continuous convex function on a compact convex set
attains its maximum at an extreme point of the set. Thus we have
\[
\sup_{\|z\|_{\ell^1}=1} f(z)
=
\max_{\substack{1\le j\le n\\ |\lambda|=1}} f(\lambda e_j).
\]

Next we use the symmetries of $\widehat{T}$. For any unimodular scalar
$|\lambda|=1$ we have
\[
G(\lambda z) = \lambda G(z),\qquad
\diag(\lambda z - G(\lambda z)\mathbf{1}) = \lambda\,\diag(z-G(z)\mathbf{1}),
\]
and thus
\[
\widehat{T}(\lambda z)
= Q\,\diag(\lambda z - G(\lambda z)\mathbf{1})\,Q
= \lambda\,Q\,\diag(z-G(z)\mathbf{1})\,Q
= \lambda\,\widehat{T}(z).
\]
Therefore $f(\lambda z)=f(z)$ for all $|\lambda|=1$. Similarly, for any permutation matrix $\Pi$ we have $J\Pi=\Pi J=J$ and hence
$Q\Pi=\Pi Q$. Writing $z'=\Pi z$, we obtain
\[
\widehat{T}(z')
= Q\,\diag(z'-G(z')\mathbf 1)\,Q
= Q\,\Pi\,\diag(z-G(z)\mathbf 1)\,\Pi^\top Q
= \Pi\,\widehat{T}(z)\,\Pi^\top,
\]
so $f(z')=\|\widehat{T}(z')\|_{S_1}=\|\widehat{T}(z)\|_{S_1}$. Consequently,
\[
f(\lambda e_j) = f(e_1)\qquad
\text{for all }|\lambda|=1,\ 1\le j\le n,
\]
and hence
\[
\sup_{\|z\|_{\ell^1}=1} f(z) = f(e_1).
\]

We now compute $\|\widehat{T}(e_1)\|_{S_1}$. Clearly, $G(e_1) = \frac1n$ and $e_1-G(e_1)\mathbf 1 = \left(\frac{n-1}{n},-\frac1n,\dots,-\frac1n\right)$. Set
\[
D := \diag(e_1-G(e_1)\mathbf 1)
= \diag\left(\tfrac{n-1}{n},-\tfrac1n,\dots,-\tfrac1n\right).
\]
Then
\[
\widehat{T}(e_1) = Q D Q =: A.
\]
Both $D$ and $Q$ are real symmetric matrices, hence $A$ is real symmetric as well,
and its Schatten $1$-norm equals the sum of the absolute values of its eigenvalues.

Let $u := (-(n-1),1,\dots,1)^\top$. Consider the orthogonal decomposition
\[
\mathbb{R}^n
= E_0 \oplus E_1 \oplus E_2,
\]
where
\[
E_0 := \operatorname{span}\{\mathbf 1\},\qquad
E_1 := \operatorname{span}\{u\},\quad
E_2 := \{x\in\mathbb{R}^n : x_1=0,\ \sum_{j=2}^n x_j=0\}.
\]One checks that $E_0,E_1,E_2$ are mutually orthogonal and invariant under $A$.

\emph{(i) On $E_0$.}
Since $Q\mathbf 1=0$, we have $A\mathbf 1 = Q D Q \mathbf 1 = 0$, so $0$ is an eigenvalue.

\emph{(ii) On $E_2$.}
If $x\in E_2$, then $x_1=0$ and $\sum_{j=2}^n x_j=0$. Thus
\[
(Dx)_1 = \frac{n-1}{n} x_1 = 0,\qquad
(Dx)_j = -\frac1n x_j\quad (2\le j\le n),
\]
so $\sum_{j=1}^n (Dx)_j = 0$, i.e.\ $Dx\in V:=\{y:\sum_{j=1}^n y_j=0\}$, and hence $QDx = Dx$.
Since also $x\in V$, we have $Qx=x$, and therefore
\[
Ax = QDQx = QDx = Dx = -\frac1n x.
\]
Thus $-\frac1n$ is an eigenvalue of $A$ with multiplicity $\dim E_2 = n-2$.

\emph{(iii) On $E_1$.}
The vector $u$ satisfies $\sum_{j=1}^n u_j=0$, so $u\in V$ and $Qu=u$.
A direct computation gives
\[
Du = \left(-\tfrac{(n-1)^2}{n}, -\tfrac1n,\dots,-\tfrac1n\right)^\top,
\]
and $\sum_{j=1}^n (Du)_j
= -\frac{(n-1)^2}{n} + (n-1)\cdot\left(-\frac1n\right)
= -(n-1)$. Hence
\[
QDu = Du - \frac1n JDu = Du + \frac{n-1}{n}\mathbf 1,
\]
that is,
\[
(QDu)_1 = -\frac{(n-1)(n-2)}{n},\qquad
(QDu)_j = \frac{n-2}{n}\quad (j\ge2).
\]
Thus
\[
QDu = \left(-\tfrac{(n-1)(n-2)}{n}, \tfrac{n-2}{n}, \dots, \tfrac{n-2}{n}\right)
= \frac{n-2}{n}(- (n-1), 1, \dots, 1)
= \frac{n-2}{n} u.
\]
Since $Qu=u$, we obtain
\[
Au = QDQu = QDu = \frac{n-2}{n} u,
\]
so $\frac{n-2}{n}$ is an eigenvalue of $A$, corresponding to $E_1$.

Collecting the above, the eigenvalues of $A$ are
\[
\lambda_1 = \frac{n-2}{n},\quad
\lambda_2 = -\frac1n\ \text{(with multiplicity $n-2$)},\quad
\lambda_0 = 0.
\]
Therefore
\[
\|\widehat{T}(e_1)\|_{S_1}
= \|A\|_{S_1}
= |\lambda_1| + (n-2)|\lambda_2| + |\lambda_0|
= \frac{n-2}{n} + (n-2)\frac1n
= \frac{2(n-2)}{n}.
\]
Since $\|e_1\|_{\ell^1}=1$, this shows that
\[
\sup_{\|z\|_{\ell^1}=1}\|\widehat{T}(z)\|_{S_1}
= \|\widehat{T}(e_1)\|_{S_1}
= \frac{2(n-2)}{n},
\]
and hence \eqref{eq:sharp-p=1} holds for all $z\in\mathbb{C}^n$, with best possible
constant. In particular, equality is attained at $z=e_1$.
\end{proof}

We are now ready to present the following.
\begin{thm}\label{thm:weaker result for 1<=p<= 2}
	For $n \geq 4$, let \( z_1, z_2, \ldots, z_n \) be the zeros of \( p(z) \), and let \( w_1, w_2, \ldots, w_{n-1} \) be its critical points. Assume further that $\sum_{j=1}^n z_j=0$. Then for $1\le p\le 2$, 
		\[
	\sum_{k=1}^{n-1} |w_k|^p \le  \frac{2^{2-p} (n-2)}{n} \sum_{j=1}^n |z_j|^p .
	\]
\end{thm}

\begin{proof}
Similar to the proof of case $p\ge 2$, we begin by using the two endpoint estimates corresponding to $p=1$ and $p=2$, namely
\[
   \|\widehat{T}(z)\|_{S_1}\le  \frac{2(n-2)}{n} \|z\|_{\ell^1},\qquad
   \|\widehat{T}(z)\|_{S_2}\le \sqrt{\frac{n-2}{n}} \|z\|_{\ell^2},
\]
where the first inequality is given by Proposition~\ref{prop:p=1-endpoint-sharp}  and the second by~\eqref{eq:p2}. By the Riesz--Thorin interpolation theorem for $L^p$ spaces and for Schatten $p$-classes, we know that when $p \in (1,2)$, we have 
\[
\left(\ell^1(\mathbb C^{n}),\ell^2(\mathbb C^{n})\right)_{[\theta]} = \ell^p(\mathbb C^{n}), 
\qquad 
\left(S_1,S_2\right)_{[\theta]} = S_p \quad \text{(with equal norms)},
\]
where $1/p = (1-\theta)/1 + \theta/2$. Hence, we apply Theorem~\ref{thm:complex-interpolation} to the two compatible Banach couples $(\ell^1(\mathbb C^{n}),\ell^2(\mathbb C^{n}))$ and $(S_1,S_2)$ and obtain\[
   \|\widehat{T}(z)\|_{S_p} \le \left(\frac{2(n-2)}{n}\right)^{\frac{2}{p}-1} \left(\sqrt{\frac{n-2}{n}}\right)^{2-\frac{2}{p}} \|z\|_{\ell^p}
    = 2^{\frac{2}{p}-1}\left(\frac{n-2}{n}\right)^\frac{1}{p}\|z\|_{\ell^p}.
\]
Raising both sides to the $p$th power and using the eigenvalue--singular value comparison~\eqref{eq:eval-to-sval} yields
\[
   \sum_{k=1}^{n-1}|w_k|^p \le \|T_0(z)\|_{S_p}^p=\|\widehat{T}|_V(z)\|_{S_p}^p
    \le \|\widehat{T}(z)\|_{S_p}^p
    \le \frac{2^{2-p} (n-2)}{n} \sum_{j=1}^n |z_j|^p,
\]
as desired.
\end{proof}

\begin{remark}
	Under the assumption $\sum_{j=1}^n z_j=0$ and $p\in \left(2-\log\frac{n-1}{n-2}/\log2,2\right)$, Theorem~\ref{thm:weaker result for 1<=p<= 2} is stronger than Pereira's result~\eqref{eq:Pereira1}.
\end{remark}

\section{Schoenberg type inequality of order $-1$}\label{sec:order_negative_1}

Throughout Sections~\ref{sec:order_negative_1}--\ref{sec:negative_interpolation} we consider normalized polynomials of the form
\begin{equation*}
  P(z) = z \prod_{j=1}^{n} (z - z_j),
  \qquad z_j \in \C \setminus \{0\},
\end{equation*}
and denote by \(w_1,\dots,w_n\) the zeros of the derivative \(P'(z)\).
A straightforward computation shows that the zeros and the critical points satisfy the
identities
\begin{equation}\label{eq:reciprocal-1}
  \sum_{k=1}^{n} \frac{1}{w_k}
  = 2 \sum_{j=1}^{n} \frac{1}{z_j},
\end{equation}
and
\begin{equation}\label{eq:reciprocal-2}
  \sum_{k=1}^{n} \frac{1}{w_k^{2}}
  = 3 \sum_{j=1}^{n} \frac{1}{z_j^{2}}
  + \left( \sum_{j=1}^{n} \frac{1}{z_j} \right)^{2}.
\end{equation}
Proofs of the identities~\eqref{eq:reciprocal-1} and~\eqref{eq:reciprocal-2} can be found in Appendix~\ref{sec:appendix_reciprocal}.

\begin{remark}[A simple consequence for Sendov's conjecture]\label{rem:Sendov-a=1}
As a simple application of~\eqref{eq:reciprocal-1}, we note that the case $a=1$ of
Conjecture~\ref{conj:Sendov} follows directly from this identity.
Indeed, if \(|z_j + 1| \le 1\) for all \(j\), then for each \(1 \le j \le n\), \(\Re\left( \frac{1}{z_j} \right) \le -\frac{1}{2}\). Multiplying by $2$ and summing over $j$ yields
\(\Re\left( \sum_{j=1}^n \frac{2}{z_j} \right) \le -n\).
By \eqref{eq:reciprocal-1}, this implies \(\Re\left( \sum_{k=1}^n \frac{1}{w_k} \right) \le -n\), and hence \(\min_{1 \le k \le n} |w_k| \le 1\).
\end{remark}

Our goal in this section is to compare the sums of negative powers of the moduli of the
critical points \(w_k\) and the zeros \(z_j\) in a Schoenberg-type fashion.
We begin by recalling the following result, which is a consequence of
Malamud~\cite[Theorem~4.10]{Mal04} (see also~\cite[Corollary~3.3]{Tang25} for a different proof).

\begin{lem}[\cite{Mal04,Tang25}]\label{lem:ek}
Let \( z_1, z_2, \ldots, z_n \) be the zeros of a complex polynomial \( p(z) \) and
\( w_1, w_2, \ldots, w_{n-1} \) be its critical points. Then 
\[
  e_k(|\bw|) \le \frac{n-k}{n} e_k(|\bz|), \qquad  k=1, \ldots, n-1,
\]
where \(\bz = (z_1,\dots,z_n)\) and \(\bw = (w_1,\dots,w_{n-1})\).
\end{lem}

The next theorem can be viewed as a negative-order analogue of Lemma~\ref{lem:ek}.

\begin{thm}\label{thm:negative_order_ek_1}
Let
\[
  P(z) = z \prod_{j=1}^{n} (z - z_j),
  \qquad z_j \in \C \setminus \{0\},
\]
and let $w_1,\dots,w_n$ denote the zeros of $P'(z)$. Then
\begin{equation}\label{eq:ele-minus-one}
  e_k(|\bw|^{-1})
  \le (k+1) e_k(|\bz|^{-1}),  \qquad  k=1, \ldots, n,
\end{equation}
where \( \bz = (z_1,\dots,z_n) \) and \( \bw = (w_1,\dots,w_n) \).
Moreover, equality in \eqref{eq:ele-minus-one} holds for all $k=1,\dots,n$ whenever
$z_1,\dots,z_n$ all lie on the same ray emanating from the origin.
\end{thm}

\begin{proof}
By Lemma~\ref{lem:ek}, applied to the polynomial $P$, we have
\[
  (n+1) e_{n-k}(|w_1|,\dots,|w_n|)
  \le (k+1) e_{n-k}(|z_1|,\dots,|z_n|).
\]
On the other hand, since $P$ is monic of degree $n+1$ we have
\[
  w_1 \cdots w_n = \frac{z_1 \cdots z_n}{n+1},
\]
and hence
\[
  |w_1 \cdots w_n| = \frac{|z_1 \cdots z_n|}{n+1}.
\]
Thus,
\begin{align*}
  e_k(|\bw|^{-1})
  &= \frac{e_{n-k}(|w_1|,\dots,|w_n|)}{|w_1\cdots w_n|}
   = \frac{(n+1)\,e_{n-k}(|w_1|,\dots,|w_n|)}{|z_1\cdots z_n|} \\
  &\leq \frac{(k+1)\,e_{n-k}(|z_1|,\dots,|z_n|)}{|z_1\cdots z_n|}
   = (k+1)\,e_k(|\bz|^{-1}),
\end{align*}
which proves \eqref{eq:ele-minus-one}.

We now turn to the equality case. Suppose that $z_1,\dots,z_n$ all lie on the same ray emanating from the origin. After multiplying $P$ by a unimodular constant and performing a rotation of the variable, we may assume without loss of generality that
\[
z_j > 0 \quad\text{for all } j=1,\dots,n.
\]
Then $P$ is a real polynomial and Gauss--Lucas theorem \cite[p. 18] {BE95} implies that all critical points $w_1,\dots,w_n$ are real and lie in the interval $(0,\max_{j=1}^n z_j]$; in particular,
\[
w_k > 0 \quad\text{for all } k=1,\dots,n.
\]Consider the generating functions
\[
E_z(t) := \prod_{j=1}^n \left(1+\frac{t}{z_j}\right)
= \sum_{k=0}^n e_k\bigl(z_1^{-1},\dots,z_n^{-1}\bigr)\,t^k,
\]
and
\[
E_w(t) := \prod_{k=1}^n \left(1+\frac{t}{w_k}\right)
= \sum_{k=0}^n e_k\bigl(w_1^{-1},\dots,w_n^{-1}\bigr)\,t^k.
\]
A direct computation using the factorization
\[
P(z) = z \prod_{j=1}^n (z-z_j), \qquad
P'(z) = (n+1)\prod_{k=1}^n (z-w_k),
\]
shows that
\[
E_w(t) = (t E_z(t))'.
\]
Indeed, one checks that
\(
E_w(t) = (-1)^n P'(-t)/\prod_{j=1}^n z_j
\)
and
\(
tE_z(t) = (-1)^{n+1} P(-t)/\prod_{j=1}^n z_j,
\)
so differentiating the latter with respect to $t$ yields the former. Comparing coefficients of $t^k$ in the identity
\(
E_w(t) = (tE_z(t))'
\)
gives
\[
e_k\bigl(w_1^{-1},\dots,w_n^{-1}\bigr)
= (k+1)\,e_k\bigl(z_1^{-1},\dots,z_n^{-1}\bigr),
\qquad k=0,\dots,n.
\]Since $z_j>0$ and $w_k>0$, we have $|z_j|^{-1}=z_j^{-1}$ and $|w_k|^{-1}=w_k^{-1}$, and hence
\[
e_k(|\bw|^{-1})
= e_k\bigl(w_1^{-1},\dots,w_n^{-1}\bigr)
= (k+1)\,e_k\bigl(z_1^{-1},\dots,z_n^{-1}\bigr)
= (k+1)e_k(|\bz|^{-1}),
\]
for all $k=1,\dots,n$. This establishes the claimed equality case in~\eqref{eq:ele-minus-one}.
\end{proof}


As a direct consequence of Theorem~\ref{thm:negative_order_ek_1}, we obtain the following Schoenberg type inequality of order $-1$, which may be viewed as a modulus analogue of the identity~\eqref{eq:reciprocal-1}.

\begin{cor}\label{cor:minus-one}
Let
\[
  P(z) = z \prod_{j=1}^{n} (z - z_j),
  \qquad z_j \in \C \setminus \{0\},
\]
and let $w_1,\dots,w_n$ denote the zeros of $P'(z)$. Then
\begin{equation}\label{eq:Schoenberg-minus-one}
  \sum_{k=1}^{n} \frac{1}{|w_k|}
  \le 2 \sum_{j=1}^{n} \frac{1}{|z_j|}.
\end{equation}
Moreover, equality in \eqref{eq:Schoenberg-minus-one} holds whenever
$z_1,\dots,z_n$ all lie on the same ray emanating from the origin.
\end{cor}

\begin{remark}
We do not attempt to characterize all cases of equality in
\eqref{eq:Schoenberg-minus-one}. For our purposes it suffices to know that
equality holds for a nontrivial family of configurations, which already
shows that \eqref{eq:Schoenberg-minus-one} is sharp.
\end{remark}


\section{Schoenberg type inequality of order $-2$}\label{sec:order_negative_2}

\begin{thm}\label{thm:BBstar-bound}
	Let 
	\[
	P(z)=z\prod_{j=1}^{n}(z-z_j),
	\qquad z_j\in\mathbb{C}\setminus\{0\},
	\]
	and let $w_1,\dots,w_n$ be the zeros of $P'(z)$. Then
	\begin{equation}\label{eq:BBstar-bound}
		\sum_{k=1}^{n}\frac{1}{|w_k|^{2}} \le (n+3)\sum_{j=1}^{n}\frac{1}{|z_j|^{2}}.
	\end{equation}
	Moreover, equality holds in
	\eqref{eq:BBstar-bound} if and only if $z_1=\cdots=z_n$.
\end{thm}

\begin{proof}
	  Applying Lemma~\ref{lem:CN-matrix-model} to $P$ and using
	$z_{n+1}=0$, we see that $w_1,\dots,w_n$ are
	precisely the eigenvalues of the $n\times n$ matrix
	\[
	A_0:=D\left(I-\frac{1}{n+1}J\right),
	\]where \(D:=\operatorname{diag}(z_1,\dots,z_n)\). Since $D$ is invertible, the matrices $A_0$ and
	\[
	A:=
	\left(I-\frac1{n+1}J\right)D
	= D - \frac1{n+1} \mathbf{1} Z^{\top},
	\qquad Z:=(z_1,\dots,z_n)^{\top},
	\]
	have the same multiset of eigenvalues (this follows from the standard
	fact that $XY$ and $YX$ have the same nonzero eigenvalues for square
	matrices $X,Y$).  Hence the eigenvalues of $A$ are also exactly
	$w_1,\dots,w_n$.
	
	Next we compute $B:=A^{-1}$ explicitly. Set $N:=n+1$ and $U:=\frac1N D^{-1}\mathbf{1}$, we write
	\[
	A = D - \frac1N \mathbf{1} Z^{\top}
	= D(I-UZ^{\top}).
	\] By the Sherman--Morrison formula,
	\[
	(I-UZ^{\top})^{-1}
	=I+U\,(1-Z^{\top}U)^{-1}Z^{\top}.
	\]
	We have
	\[
	Z^{\top}U
	= \frac1N Z^{\top} D^{-1}\mathbf{1}
	= \frac1N\sum_{j=1}^{n}\frac{z_j}{z_j}
	= \frac{n}{N}
	= \frac{n}{n+1},
	\]
	hence \((1-Z^{\top}U)^{-1} = n+1\). Therefore
	\[
	A^{-1}
	= (I-UZ^{\top})^{-1}D^{-1}
	= \bigl(I+(n+1)UZ^{\top}\bigr)D^{-1}.
	\]
	Substituting $U=\frac1N D^{-1}\mathbf{1}$ gives $(n+1)UZ^{\top}= D^{-1}\mathbf{1}Z^{\top}$, and using $Z^{\top}D^{-1}=\mathbf{1}^{\top}$ we obtain
	\[
	B=A^{-1}
	= D^{-1}+D^{-1}\mathbf{1} Z^{\top}D^{-1}
	= D^{-1} + D^{-1}\mathbf{1} \mathbf{1}^{\top}.
	\]
	Set \(b:=D^{-1}\mathbf{1}=\left(\frac{1}{z_1},\dots,\frac{1}{z_n}\right)^{\top}=\left(b_1,\dots,b_n\right)^{\top}\). Then
	\begin{equation}\label{eq:B_inverse_of_A_11111}
	B = D^{-1} + b\,\mathbf{1}^{\top}.
	\end{equation}

	We now estimate the sum $\sum_{k=1}^{n}1/|w_k|^{2}$ via the singular
	values of $B$.  Let $\lambda_1(B),\dots,\lambda_n(B)$ be the eigenvalues of
	$B$ and $\sigma_1(B),\dots,\sigma_n(B)$ its singular values.  Since
	$w_k$ are the eigenvalues of $A$, we have
	\begin{equation}\label{eq:eiganvalues_of_B_1}
		\lambda_k(B)=\frac{1}{w_k},\qquad k=1,\dots,n.
	\end{equation}
	By Schur's inequality (Lemma~\ref{lem:weyl}),
	\begin{equation}\label{eq:BBstar-inequality}
\sum_{k=1}^{n}\frac{1}{|w_k|^{2}}
		= \sum_{k=1}^n |\lambda_k(B)|^2
	\le
	\sum_{k=1}^n \sigma_k(B)^2
	=\operatorname{tr}(BB^*),
	\end{equation}with equality if and only if $B$ is normal.

	Now we compute $\operatorname{tr}(BB^*)$. From~\eqref{eq:B_inverse_of_A_11111} we obtain $B^* = (D^{-1})^* + \mathbf{1}\, b^*$, and hence
	\[
	\begin{aligned}
		BB^*
		&=(D^{-1} + b\,\mathbf{1}^{\top})\left((D^{-1})^* + \mathbf{1}\,b^*\right)\\
		&= D^{-1}(D^{-1})^*
		+ D^{-1}\mathbf{1}\,b^*
		+ b\,\mathbf{1}^{\top}(D^{-1})^*
		+ b\,\mathbf{1}^{\top}\mathbf{1}\,b^*.
	\end{aligned}
	\]
	Using $D^{-1}\mathbf{1}=b$, $\mathbf{1}^{\top}(D^{-1})^*=b^*$ and
	$\mathbf{1}^{\top}\mathbf{1}=n$, this simplifies to
	\begin{equation}\label{eq:Thm_6.1_BBstar_11}
	BB^* = D^{-1}(D^{-1})^* + b\,b^* + b\,b^* + n\,b\,b^*
	= D^{-1}(D^{-1})^* + (n+2)\,b\,b^*.
	\end{equation}
	Since $D^{-1}(D^{-1})^*$ is diagonal with diagonal entries $1/|z_j|^2$,
	we have
	\[
	\operatorname{tr}\left(D^{-1}(D^{-1})^*\right)=\operatorname{tr}(b\,b^*) = b^*b
	= \sum_{j=1}^{n}\frac{1}{|z_j|^2}.
	\]Therefore
	\[
	\operatorname{tr}(BB^*)
	= \sum_{j=1}^{n}\frac{1}{|z_j|^2}
	+ (n+2)\sum_{j=1}^{n}\frac{1}{|z_j|^2}
	= (n+3)\sum_{j=1}^{n}\frac{1}{|z_j|^2}.
	\]
	Combining this with \eqref{eq:BBstar-inequality} yields~\eqref{eq:BBstar-bound}.
	
By~\eqref{eq:BBstar-inequality} and Lemma~\ref{lem:weyl}, equality in \eqref{eq:BBstar-bound} holds if and only if $B$ is normal. 

\begin{claim}\label{claim:B_normal_iff_11}
$B$ is normal if and only if $z_1=\cdots=z_n$.    
\end{claim} 
\begin{proof}[Proof of Claim~\ref{claim:B_normal_iff_11}]
Recalling that $b_i = 1/z_i$, we set
	\[
	F:=(D^{-1})^* D^{-1}=\operatorname{diag}\left(|b_1|^2,\dots,|b_n|^2\right),
	\quad
	f:=F\mathbf{1}=\begin{pmatrix}|b_1|^2\\ \vdots\\ |b_n|^2\end{pmatrix},
	\quad
	s:=\mathbf{1}^{\top}f = \sum_{j=1}^n |b_j|^2.
	\] Then \eqref{eq:Thm_6.1_BBstar_11} gives \(BB^* = F + (n+2)\,bb^*\). On the other hand, by \eqref{eq:B_inverse_of_A_11111} we have
	\[
	B^*B = (I+J)(D^{-1})^* D^{-1}(I+J) = (I+J)F(I+J)
	= F + JF + FJ + JFJ.
	\]
	Using $JF = \mathbf{1}f^{\top}$, $FJ = f\mathbf{1}^{\top}$ and 
	$JFJ = (\mathbf{1}^{\top}f)\,J = sJ$, we obtain
	\begin{equation}\label{eq:Thm_6.1_BstarB_11}
	B^*B = F + \mathbf{1}f^{\top} + f\mathbf{1}^{\top} + sJ.
	\end{equation}Thus $B$ is normal, i.e.\ $BB^*=B^*B$, if and only if
	\begin{equation}\label{eq:matrix-equality}
		\mathbf{1}f^{\top} + f\mathbf{1}^{\top} + sJ = (n+2)\,bb^*.
	\end{equation}
	Writing \eqref{eq:matrix-equality} entrywise, we obtain for all
	indices $1\le i,j\le n$,
	\begin{equation}\label{eq:entrywise}
		|b_i|^2 + |b_j|^2 + s = (n+2)b_i\overline{b_j}.
	\end{equation}Taking $i=j$ in \eqref{eq:entrywise} yields
	\[
	2|b_i|^2 + s = (n+2)|b_i|^2,
	\]
	hence
	\[
	\sum_{j=1}^n |b_j|^2 = n|b_i|^2\qquad\text{for all }1\le i \le n.
	\]
	In particular, all $|b_i|$ are equal: there exists $r>0$ such that
	\[
	|b_i|^2 = r^2,\qquad i=1,\dots,n,
	\]
	and then $s = nr^2$.
	
	Now fix $i\neq j$ in \eqref{eq:entrywise}. Using $|b_i|^2=|b_j|^2=r^2$
	and $s=nr^2$, we get
	\[
	r^2 + r^2 + nr^2 = (n+2)b_i\overline{b_j},
	\]
	that is,
	\[
	(n+2)r^2 = (n+2)b_i\overline{b_j},
	\]
	so
	\[
	b_i\overline{b_j} = r^2\qquad\text{for all }i,j.
	\]
	Write $b_i = r e^{\mathrm{i}\theta_i}$; then
	\[
	b_i\overline{b_j} = r^2 e^{\mathrm{i}(\theta_i-\theta_j)} = r^2
	\quad\Longrightarrow\quad
	e^{\mathrm{i}(\theta_i-\theta_j)} = 1
	\]
	for all $i,j$, hence $\theta_1=\cdots=\theta_n$ modulo $2\pi$.  It
	follows that all $b_i$ coincide:
	\[
	b_1=\cdots=b_n = re^{\mathrm{i}\theta}
	\]
	for some $\theta\in\mathbb{R}$.  Since $b_i=1/z_i$, we conclude that
	\[
	z_1=\cdots=z_n = \frac{1}{re^{\mathrm{i}\theta}}.
	\]
	
	Conversely, if $z_1=\cdots=z_n=z_0\neq0$, then
	\[
	D^{-1} = \frac{1}{z_0}I,\qquad
	B = D^{-1}(I+J) = \frac{1}{z_0}(I+J),
	\]
	and $I+J$ is Hermitian. Hence $B$ is a scalar multiple of a Hermitian
	matrix and thus normal.  This shows that $B$ is normal if and only if
	$z_1=\cdots=z_n$, proving the claim.\end{proof}
Now the equality case of \eqref{eq:BBstar-bound} follows directly from Claim~\ref{claim:B_normal_iff_11}. This completes the proof.
\end{proof}

\begin{remark}
	Theorem~\ref{thm:BBstar-bound} gives a sharp inequality of the form \(
	\sum_{k=1}^{n}\frac{1}{|w_k|^{2}} \le C \sum_{j=1}^{n}\frac{1}{|z_j|^{2}}\). By contrast, combining Corollary~\ref{cor:minus-one},
	\(
	\sum_{k=1}^{n}\frac{1}{|w_k|}
	\le 2\sum_{j=1}^{n}\frac{1}{|z_j|},
	\)
	with the elementary inequality
	$\sum_k |a_k|^{2}\le(\sum_k |a_k|)^2$ and then the Cauchy--Schwarz
	inequality, one only obtains the much weaker bound $\sum_{k=1}^{n}\frac{1}{|w_k|^{2}}
	\le 4\left(\sum_{j=1}^{n}\frac{1}{|z_j|}\right)^{2}
	\le 4n\sum_{j=1}^{n}\frac{1}{|z_j|^{2}}$.
\end{remark}

The identity~\eqref{eq:reciprocal-2} leads us to the following conjectural
refinements of Theorem~\ref{thm:BBstar-bound} in the case of order $-2$.

\begin{conj}\label{conj:C1}
	Let
	\[
	P(z) = z \prod_{j=1}^{n} (z - z_j),
	\qquad z_j \in \C \setminus \{0\},
	\]
	and let $w_1,\dots,w_n$ denote the zeros of $P'(z)$. Then
	\begin{equation}\label{eq:C1}
		\sum_{k=1}^{n} \frac{1}{|w_k|^{2}}
		\le 3 \sum_{j=1}^{n} \frac{1}{|z_j|^{2}}
		+ \left| \sum_{j=1}^{n} \frac{1}{z_j} \right|^{2}.
	\end{equation}
	Moreover, equality in \eqref{eq:C1} holds whenever
	$z_1,\dots,z_n$ all lie on the same line passing through the origin.
\end{conj}



In the case $n=2$ we can verify Conjecture~\ref{conj:C1} by an explicit
computation.

\begin{prop}\label{thm:n2-C1}
	Let $n=2$ and
	\[
	P(z) = z (z-a)(z-b), \qquad a,b \in \C \setminus \{0\}.
	\]
	Let $w_1,w_2$ be the zeros of $P'(z)$. Then
	\[
	\frac{1}{|w_1|^{2}} + \frac{1}{|w_2|^{2}}
	\le 3\left( \frac{1}{|a|^{2}} + \frac{1}{|b|^{2}} \right)
	+ \left| \frac{1}{a} + \frac{1}{b} \right|^{2}.
	\]
	Equivalently, Conjecture~\ref{conj:C1} holds for $n=2$.
\end{prop}

\begin{proof}
	We have
	\[
	P'(z) = 3z^{2} - 2(a+b)z + ab,
	\]
	so the critical points are
	\[
	w_{1,2}
	= \frac{(a+b) \pm \sqrt{a^{2}-ab+b^{2}}}{3}.
	\]
	A direct computation shows that
	\[
	w_1 + w_2 = \frac{2(a+b)}{3}, \qquad
	w_1 w_2   = \frac{ab}{3},     \qquad
	w_1 - w_2 = \frac{2}{3}\sqrt{a^{2}-ab+b^{2}}.
	\]
	Hence
	\begin{align}
		\frac{1}{|w_1|^{2}} + \frac{1}{|w_2|^{2}}
		&= \frac{|w_1|^{2} + |w_2|^{2}}{|w_1 w_2|^{2}} \notag \\
		&= \frac{|w_1 - w_2|^{2} + |w_1 + w_2|^{2}}
		{2\,|w_1 w_2|^{2}} \notag \\
		&= \frac{\tfrac{4}{9}|a^{2}-ab+b^{2}| + \tfrac{4}{9}|a+b|^{2}}
		{2\,|ab|^{2}/9} \notag \\
		&= \frac{2}{|ab|^{2}}
		\left( |a^{2}-ab+b^{2}| + |a+b|^{2} \right).
		\label{eq:w-sum}
	\end{align}
	On the other hand,
	\begin{align}
		3\left( \frac{1}{|a|^{2}} + \frac{1}{|b|^{2}} \right)
		+ \left| \frac{1}{a} + \frac{1}{b} \right|^{2}
		&= \frac{3(|a|^{2} + |b|^{2})}{|ab|^{2}}
		+ \frac{|a+b|^{2}}{|ab|^{2}} \notag \\
		&= \frac{3(|a|^{2} + |b|^{2}) + |a+b|^{2}}{|ab|^{2}}.
		\label{eq:rhs}
	\end{align}
	Comparing \eqref{eq:w-sum} and \eqref{eq:rhs}, we see that
	Conjecture~\ref{conj:C1} for $n=2$ is equivalent to
	\begin{equation}\label{eq:key-ineq}
		|a^{2}-ab+b^{2}|
		\le |a|^{2} + |b|^{2} - \Re(a\overline{b}).
	\end{equation}To prove \eqref{eq:key-ineq}, note that
	\[
	a^{2}-ab+b^{2} = (a-\omega b)(a-\overline{\omega} b),
	\qquad \omega = e^{i\pi/3}.
	\]
	Using the elementary inequality $2|xy| \le |x|^{2} + |y|^{2}$ for $x,y \in \C$,
	we obtain
	\begin{align*}
		|a^{2}-ab+b^{2}|
		&= |a-\omega b|\,|a-\overline{\omega} b| \\
		&\le \frac{|a-\omega b|^{2} + |a-\overline{\omega} b|^{2}}{2}.
	\end{align*}
	Since $|\omega|=1$ and $\omega + \overline{\omega} = 1$, a short computation
	gives
	\begin{align*}
		|a-\omega b|^{2} + |a-\overline{\omega} b|^{2}
		&= 2(|a|^{2} + |b|^{2})
		- 2\Re\left( \overline{a}(\omega+\overline{\omega})b \right) \\
		&= 2(|a|^{2} + |b|^{2}) - 2\Re(\overline{a}b) \\
		&= 2\left( |a|^{2} + |b|^{2} - \Re(a\overline{b}) \right).
	\end{align*}
	Combining the last two displays yields \eqref{eq:key-ineq}, which in turn is
	equivalent to the desired inequality in the statement of the proposition.
\end{proof}

\section{Schoenberg type inequality of order $-4$}\label{sec:order_negative_4}

\begin{thm}\label{thm:order-4-B2B2*}
	Let
	\[
	P(z) = z \prod_{j=1}^n (z - z_j), \qquad z_j \in \mathbb{C} \setminus \{0\},
	\]
	and let $w_1,\dots,w_n$ be the zeros of $P'(z)$. Then 
	\begin{align}\label{eq:order-4-B2B2*}
		\sum_{k=1}^n \frac{1}{|w_k|^4}
		\le
		&(n+5) \sum_{j=1}^n \frac{1}{|z_j|^4}
		+ \left(\sum_{j=1}^n \frac{1}{|z_j|^2}\right)^2+(n+2)\left(\sum_{j=1}^n \frac{1}{|z_j|^2}\right) \left|\sum_{j=1}^n \frac{1}{z_j}\right|^2
		\notag\\&+ 2(n+2)\Re\left\{\left(\sum_{j=1}^n \frac{1}{\bar{z}_j}\right)\left(\sum_{j=1}^n \frac{1}{z_j |z_j|^2}\right)\right\}.
	\end{align}
	Moreover, equality in \eqref{eq:order-4-B2B2*} holds whenever
	$z_1=\cdots=z_n$.
\end{thm}

\begin{proof}

As in the proof of Theorem~\ref{thm:BBstar-bound}, we write
\[
B := D^{-1} + b\mathbf{1}^{\top}, \quad 
D=\operatorname{diag}(z_1,\dots,z_n),\quad
b =\left(b_1,\dots,b_n\right)^{\top} = D^{-1}\mathbf{1}.
\]
By \eqref{eq:eiganvalues_of_B_1} we have $\lambda_k(B)=1/w_k$, and hence applying Lemma~\ref{lem:weyl} to $B^2$ gives
\begin{equation}\label{eq:Schur-B2}
   \sum_{k=1}^n \frac{1}{|w_k|^4}
		= \sum_{k=1}^n |\lambda_k(B^2)|^2
		\le
		\operatorname{tr}\left(B^2(B^2)^*\right).
\end{equation}

It remains to compute $\operatorname{tr}\left(B^2(B^2)^*\right)$ explicitly.
	Recalling that $b_j = 1/z_j$, we write $F := D^{-1}(D^{-1})^* = \operatorname{diag}(|b_1|^2,\dots,|b_n|^2)$, $K := bb^*$ and $\alpha := n+2$. For $r\ge 1$ set $A_r := \sum_{j=1}^n \frac{1}{|z_j|^{2r}}$. By \eqref{eq:Thm_6.1_BBstar_11} and \eqref{eq:Thm_6.1_BstarB_11}, we have
	\[
	BB^* = F + \alpha K,
	\qquad
	B^*B = F + \mathbf{1} f^\top + f\mathbf{1}^\top + A_1 J,
	\]
	where $f:=F\mathbf{1}= (|b_1|^2,\dots,|b_n|^2)^\top$.
	Writing
	\[
	M := BB^* = F + \alpha K,\qquad
	N := B^*B = F + L,\quad L := \mathbf{1} f^\top + f\mathbf{1}^\top + A_1 J,
	\]
	we have
	\begin{eqnarray} \label{eq:trMN}
	    	\operatorname{tr}\big(B^2(B^2)^*\big)
	= \operatorname{tr}(MN)= \operatorname{tr}(F^2)
	+ \operatorname{tr}(FL)
	+ \alpha\,\operatorname{tr}(KF)
	+ \alpha\,\operatorname{tr}(KL).
	\end{eqnarray}
 First, \begin{eqnarray}\label{eq: tr F^2}
     \operatorname{tr}(F^2) = \sum_{j=1}^n |b_j|^4 = A_2.
 \end{eqnarray} Next, 
	\[
	\operatorname{tr}\left(F(\mathbf{1}f^\top)\right) = A_2,
	\qquad
	\operatorname{tr}\left(F(f\mathbf{1}^\top)\right) = A_2,
	\qquad
	\operatorname{tr}\left(F A_1 J\right) = A_1^2,
	\]
	so
\begin{eqnarray}\label{eq:trFL}
   \operatorname{tr}(FL) = 2A_2 + A_1^2. 
\end{eqnarray}
	Moreover,
\begin{eqnarray} \label{eq: trKF}
    	\operatorname{tr}(KF)
	= \operatorname{tr}(FK)
	= b^* F b
	= \sum_{j=1}^n \overline{b_j} |b_j|^2 b_j
	= \sum_{j=1}^n |b_j|^4
	= A_2.
\end{eqnarray}
Finally, for $KL$, we have
	\[
	KL = K(\mathbf{1}f^\top) + K(f\mathbf{1}^\top) + A_1 KJ.
	\]
	Writing $U := \mathbf{1}^\top b = \sum_{j=1}^n \frac{1}{z_j}$ and
	$W := f^\top b = \sum_{j=1}^n  \frac{1}{z_j|z_j|^2} $, a direct computation yields
	\[
	\operatorname{tr}\big(K(\mathbf{1}f^\top)\big)
	= (b^*\mathbf{1})(f^\top b) = \overline{U} W,\qquad
	\operatorname{tr}\big(K(f\mathbf{1}^\top)\big)
	= (b^*f)(\mathbf{1}^\top b) = U \overline{W},
	\]
	and \(\operatorname{tr}(KJ)
	= (\mathbf{1}^\top b)(b^*\mathbf{1})
	= |U|^2\). Hence
\begin{eqnarray} \label{eq:tr KL}
    \operatorname{tr}(KL)
	= 2\Re(\overline{U}W) + A_1 |U|^2.
\end{eqnarray}
Collecting terms \eqref{eq: tr F^2}, \eqref{eq:trFL}, \eqref{eq: trKF} and \eqref{eq:tr KL}, we find in \eqref{eq:trMN}
	\[
	\begin{aligned}
		\operatorname{tr}(MN)
		&= A_2 + (2A_2 + A_1^2) + \alpha A_2
		+ \alpha\big(2\,\Re(\overline{U}W) + A_1 |U|^2\big)\\
		&= (3+\alpha) A_2 + A_1^2
		+ \alpha\big(2\,\Re(\overline{U}W) + A_1 |U|^2\big).
	\end{aligned}
	\]
	Since $\alpha = n+2$, we have $3+\alpha = n+5$, so
	\[
	\operatorname{tr}\left(B^2(B^2)^*\right)
	= (n+5) A_2 + A_1^2
	+ (n+2)\left(2\,\Re(\overline{U}W) + A_1 |U|^2\right),
	\]
	which is exactly the right-hand side of \eqref{eq:order-4-B2B2*}.
	Combining this with \eqref{eq:Schur-B2} proves the inequality.
	
	By \eqref{eq:Schur-B2}, equality in \eqref{eq:order-4-B2B2*} occurs if
	and only if equality holds in Schur's inequality for the matrix $B^2$,
	that is, if and only if $B^2$ is normal. In particular, if
	$z_1=\dots=z_n=z_0\neq 0$, then $D^{-1}=z_0^{-1}I_n$ and
	\[
	B = D^{-1}+b\mathbf{1}^\top = z_0^{-1}(I+J),
	\]
	so $B$ is a scalar multiple of the Hermitian matrix $I+J$ and thus normal. Hence $B^2$ is normal and equality holds in
	\eqref{eq:order-4-B2B2*}. This completes the proof.
\end{proof}

\begin{remark}
	A natural question is whether equality in \eqref{eq:order-4-B2B2*} occurs only in the configuration $z_1=\dots=z_n=z_0\neq 0$, as in Theorem~\ref{thm:BBstar-bound}.
	Unfortunately, the answer is negative.  
	By \eqref{eq:Schur-B2}, equality in \eqref{eq:order-4-B2B2*} holds if and only if 
	$B^{2}$ is a normal matrix. In particular, if $z_1=\dots=z_n=z_0\neq 0$, then 
	$B$ is normal, hence $B^{2}$ is normal, and equality holds in 
	\eqref{eq:order-4-B2B2*}. However, $B^{2}$ may be normal even when $B$ itself is not.  
	For example, when $n=2$ and $z_1=-z_2\neq 0$, one computes
	\[
	B=\frac{1}{z_1}
	\begin{pmatrix}
		2 & 1\\
		-1 & -2
	\end{pmatrix},
	\qquad
	B^{2}=\frac{3}{z_1^{2}}\,I_{2},
	\]
	so $B^{2}$ is a scalar multiple of the identity and therefore normal; 
	hence equality holds in \eqref{eq:order-4-B2B2*} as well.
	A complete characterization of all zero configurations for which $B^{2}$ is normal 
	appears to be rather subtle, and we shall not attempt it here.
\end{remark}

\section{Schoenberg type inequality of order $-2m$
}\label{sec:order_negative_2m}

By the argument in Sections~\ref{sec:order_negative_2} and~\ref{sec:order_negative_4}, we in fact have, for every integer $m\ge1$,
\[
\sum_{k=1}^n \frac{1}{|w_k|^{2m}}
= \sum_{k=1}^n |\lambda_k(B^m)|^2
\;\le\; \operatorname{tr}\big(B^m(B^m)^*\big).
\]
However, even for $m=3$ the explicit expression of
$\operatorname{tr}\big(B^3(B^3)^*\big)$ in terms of the zeros $z_1,\dots,z_n$
becomes extremely complicated and involves many mixed phase-dependent
quantities (such as $\sum 1/z_j$, $\sum 1/(z_j|z_j|^2)$, etc.).
If one is willing to accept a slightly weaker but much cleaner bound
for $\sum_k |w_k|^{-2m}$, it is natural to replace
$\operatorname{tr}\big(B^m(B^m)^*\big)$ by $\operatorname{tr}\big((BB^*)^m\big)$.
This yields
\[
\sum_{k=1}^n \frac{1}{|w_k|^{2m}}
\;\le\; \operatorname{tr}\big((BB^*)^m\big),
\]
and this inequality is indeed tighter, as it follows from a well-known consequence of Horn's inequalities that \(\operatorname{tr}\left( (A^*)^3 A^3 \right) \le \operatorname{tr}\left( (A^*A)^3 \right)\); see, for example, \cite[pp.~177, 190]{HJ91}. The advantage is that $\operatorname{tr}\big((BB^*)^m\big)$ admits a
closed form depending only on the moduli $|z_j|$, which leads to
much more transparent inequalities.



The next theorem shows that, for each integer $m \ge 1$, this approach
produces a sharp upper bound for $\sum_{k=1}^n |w_k|^{-2m}$, with 
Theorem~\ref{thm:BBstar-bound} corresponding to the special case $m=1$.

\begin{thm}\label{thm:general-2m}
	Let
	\[
	P(z) = z \prod_{j=1}^n (z - z_j), \qquad z_j \in \mathbb{C} \setminus \{0\},
	\]
	and let $w_1,\dots,w_n$ be the zeros of $P'(z)$. Then for every integer $m\ge 1$ we have
	\begin{align}\label{eq:general-2m}
		\sum_{k=1}^n \frac{1}{|w_k|^{2m}}
		\le
		&\sum_{j=1}^n \frac{1}{|z_j|^{2m}}
		\notag\\&+
		\sum_{\ell=1}^{m} (n+2)^{\ell}
		\sum_{\substack{r_0,\dots,r_\ell \ge 0\\ r_0+\cdots+r_\ell = m-\ell}}
		\left(\sum_{i=1}^n \frac{1}{|z_i|^{2(r_0+r_\ell+1)}}\right) \cdot \prod_{j=1}^{\ell-1} \left(\sum_{i=1}^n \frac{1}{|z_i|^{2(r_j+1)}}\right).
	\end{align}
	Moreover, equality holds in
	\eqref{eq:general-2m} if and only if $z_1=\cdots=z_n$.
\end{thm}

\begin{proof}

As in the proof of Theorem~\ref{thm:BBstar-bound}, we write
\[
B := D^{-1} + b\mathbf{1}^{\top}, \quad 
D=\operatorname{diag}(z_1,\dots,z_n),\quad
b =\left(b_1,\dots,b_n\right)^{\top} = D^{-1}\mathbf{1}.
\]For each integer $m\ge 1$, Lemma \ref{lem:weyl} and \eqref{eq:eiganvalues_of_B_1} give
	\begin{equation}\label{eq:Schur-general}
		\sum_{k=1}^n \frac{1}{|w_k|^{2m}}
		= \sum_{k=1}^n |\lambda_k(B)|^{2m}
		\le \sum_{k=1}^n \sigma_k^{2m}(B)
		= \operatorname{tr}\left((BB^*)^m\right),
	\end{equation}with equality if and only if $B$ is normal.

It remains to compute $\operatorname{tr}\left((BB^*)^m\right)$ explicitly.
	Recalling that $b_j = 1/z_j$, we write $F := D^{-1}(D^{-1})^* = \operatorname{diag}(|b_1|^2,\dots,|b_n|^2)$, $K := bb^*$ and $\alpha := n+2$. For $r\ge 1$ set $A_r := \sum_{j=1}^n \frac{1}{|z_j|^{2r}}$. By \eqref{eq:Thm_6.1_BBstar_11}, we have
	\[
	BB^* = F + \alpha K.
	\]We now compute $\operatorname{tr}\left((F+\alpha K)^m\right)$.
	Expanding the $m$th power,
	\[
	(F+\alpha K)^m
	= \sum_{\ell=0}^m \alpha^\ell
	\sum_{\substack{\text{words of length }m\\\text{with }\ell\text{ copies of }K}}
	\text{(corresponding word in }F\text{ and }K).
	\]
	Each word with $\ell$ copies of $K$ and $m-\ell$ copies of $F$ can be
	uniquely written as
	\[
	F^{r_0} K F^{r_1} K \cdots K F^{r_\ell},
	\]
	where $r_0,\dots,r_\ell\ge 0$ and
	$r_0+\dots+r_\ell = m-\ell$.
	Thus
	\[
	\operatorname{tr}\big((F+\alpha K)^m\big)
	= \operatorname{tr}(F^m)
	+ \sum_{\ell=1}^m \alpha^\ell
	\sum_{\substack{r_0,\dots,r_\ell\ge 0\\ r_0+\cdots+r_\ell = m-\ell}}
	\operatorname{tr}\!\big(F^{r_0} K F^{r_1} K \cdots K F^{r_\ell}\big).
	\]Using the rank-one structure $K=b b^*$, we have
	\[
	K F^r K = (b^* F^r b)\,K,\qquad r\ge 0,
	\]
	and by induction this yields
	\[
	\operatorname{tr}\!\big(F^{r_0} K F^{r_1} K \cdots K F^{r_\ell}\big)
	= \left(\prod_{j=1}^{\ell-1} b^* F^{r_j} b\right) \cdot (b^* F^{r_0+r_\ell} b).
	\]
	Since $F^s = \operatorname{diag}(|b_1|^{2s},\dots,|b_n|^{2s})$, we have
	\[
	b^* F^r b
	= \sum_{j=1}^n \overline{b_j} |b_j|^{2r} b_j
	= \sum_{j=1}^n |b_j|^{2(r+1)}
	= A_{r+1}.
	\]
	Therefore
	\[
	\operatorname{tr} \left(F^{r_0} K F^{r_1} K \cdots K F^{r_\ell}\right)
	= A_{r_0+r_\ell+1} \cdot \prod_{j=1}^{\ell-1} A_{r_j+1}.
	\]Putting these pieces together, we obtain
	\begin{equation}\label{eq:will_be_used_in_app_D1}
	\operatorname{tr}\left((BB^*)^m\right)
	= \operatorname{tr}\left((F+\alpha K)^m\right)
	= A_m
	+ \sum_{\ell=1}^{m} \alpha^\ell
	\sum_{\substack{r_0,\dots,r_\ell \ge 0\\ r_0+\cdots+r_\ell = m-\ell}}
	A_{r_0+r_\ell+1}\cdot \prod_{j=1}^{\ell-1} A_{r_j+1},
	\end{equation}
	which is exactly the right-hand side of \eqref{eq:general-2m} with
	$\alpha=n+2$. Combining this with \eqref{eq:Schur-general} proves
	\eqref{eq:general-2m}.
    
    By~\eqref{eq:Schur-general} and Lemma~\ref{lem:weyl}, we see that equality in~\eqref{eq:general-2m} holds if and only if $B$ is normal.
The stated equality case then follows directly from Claim~\ref{claim:B_normal_iff_11}.\end{proof}

As a direct consequence of Theorem~\ref{thm:general-2m}, we obtain the following
sharp inequality involving only 
\(\sum_{k=1}^n |w_k|^{-2m}\) and \(\sum_{j=1}^n |z_j|^{-2m}\). 
This also settles the case \(p = 2m\) of Conjecture~\ref{conj:negative_order_p_11_1}.

\begin{cor}\label{cor:sharp-2m}
	Let
	\[
	P(z) = z \prod_{j=1}^n (z - z_j), \qquad z_j \in \mathbb{C} \setminus \{0\},
	\]
	and let $w_1,\dots,w_n$ be the zeros of $P'(z)$. Then for every integer $m\ge 1$ we have
	\begin{equation}\label{eq:sharp-2m}
		\sum_{k=1}^n \frac{1}{|w_k|^{2m}}
		\le
		\frac{(n+1)^{2m} + n -1}{n}
		\sum_{j=1}^n \frac{1}{|z_j|^{2m}}.
	\end{equation}
    Moreover, equality holds in
	\eqref{eq:sharp-2m} if and only if $z_1=\cdots=z_n$.
\end{cor}

\begin{proof}
	Denote
	\[
	A_p := \sum_{j=1}^n \frac{1}{|z_j|^{2p}}, \qquad
	B_m := \sum_{k=1}^n \frac{1}{|w_k|^{2m}}.
	\]
	Then Theorem~\ref{thm:general-2m} states that
\begin{equation}\label{eq:Bm<=Am+xxx1}
		B_m
		\le
		A_m
		+
		\sum_{\ell=1}^{m} (n+2)^{\ell}
		\sum_{\substack{r_0,\dots,r_\ell \ge 0\\ r_0+\cdots+r_\ell = m-\ell}}
		A_{r_0 + r_\ell + 1} \cdot \prod_{j=1}^{\ell-1} A_{r_j + 1}.
\end{equation}We want to replace all lower-order sums $A_p$ ($1\le p\le m$) by expressions depending only on $A_m$ and $n$.

	Let $a_i := |z_i|^{-2} >0$. Then $A_p=\sum_{i=1}^n a_i^p$.  
	By H\"older's inequality we have, for every $1\le p\le m$,
	\begin{equation}\label{eq:A_p-bound}
		A_p
		=
		\sum_{i=1}^n a_i^p
		\le
		n^{1-p/m}\left(\sum_{i=1}^n a_i^m\right)^{p/m}
		=
		n^{1-p/m} A_m^{p/m}.
	\end{equation}
	Now fix $\ell\in\{1,\dots,m\}$ and a $(\ell+1)$-tuple $(r_0,\dots,r_\ell)$ of nonnegative integers with $r_0+\dots+r_\ell = m-\ell$. Define\[
	u_0 := r_0 + r_\ell + 1, \qquad
	u_j := r_j + 1 \quad (1\le j\le \ell-1).
	\] In the $\ell$th term of the right side of  \eqref{eq:Bm<=Am+xxx1}, we consider the factor $A_{r_0 + r_\ell + 1} \cdot \prod_{j=1}^{\ell-1} A_{r_j + 1}=A_{u_0} \cdot \prod_{j=1}^{\ell-1} A_{u_j}$. Since $r_0+\cdots+r_\ell = m-\ell$, we have $1\le u_0,u_j\le m$, so we can apply \eqref{eq:A_p-bound} to each $A_{u_0}, A_{u_j}$:
	\[
	A_{u_0} \le n^{1-u_0/m} A_m^{u_0/m}, \qquad
	A_{u_j} \le n^{1-u_j/m} A_m^{u_j/m}.
	\]
	Hence
	\begin{align*}
		A_{u_0}\cdot \prod_{j=1}^{\ell-1} A_{u_j}
		&\le
		n^{(1-u_0/m)+\sum_{j=1}^{\ell-1}(1-u_j/m)}
		A_m^{(u_0 + \sum_{j=1}^{\ell-1} u_j)/m}.
	\end{align*}
	We now compute the exponents. First,
	\begin{align*}
		u_0 + \sum_{j=1}^{\ell-1} u_j
		&=
		(r_0 + r_\ell + 1) + \sum_{j=1}^{\ell-1} (r_j+1)
		\\&=
		(r_0+\cdots+r_\ell) + 1 + (\ell-1)
		=
		(m-\ell) + 1 + (\ell-1) = m,
	\end{align*}
	so the exponent of $A_m$ is exactly $1$.  
	Next,
	\[
	(1-u_0/m) + \sum_{j=1}^{\ell-1}(1-u_j/m)
	=
	\ell - \frac{u_0 + \sum_{j=1}^{\ell-1} u_j}{m}
	=
	\ell - 1,
	\]
	so the exponent of $n$ is $\ell-1$. Therefore, for each $(r_0,\dots,r_\ell)$,
	\[
	A_{u_0}\prod_{j=1}^{\ell-1} A_{u_j}
	\;\le\;
	n^{\ell-1} A_m.
	\]
	This upper bound does not depend on the particular choice of $(r_0,\dots,r_\ell)$, only on $\ell$.
	
	Now we count how many $(\ell+1)$-tuples $(r_0,\dots,r_\ell)$ of nonnegative integers satisfy $r_0+\cdots+r_\ell = m-\ell$. This is the standard stars-and-bars number
	\[
\#\left\{(r_0,\dots,r_\ell)\in \mathbb{Z}_{\ge0}^{\ell+1} : r_0+\cdots+r_\ell = m-\ell \right\}
  = \binom{(m-\ell)+\ell}{\ell}
  = \binom{m}{\ell}.
\]
	Consequently, for each fixed $\ell$,
	\[
	(n+2)^{\ell}
		\sum_{\substack{r_0,\dots,r_\ell \ge 0\\ r_0+\cdots+r_\ell = m-\ell}}
		A_{r_0 + r_\ell + 1} \cdot \prod_{j=1}^{\ell-1} A_{r_j + 1}
	\le
	(n+2)^\ell \binom{m}{\ell} n^{\ell-1} A_m.
	\]
	Plugging this back into \eqref{eq:Bm<=Am+xxx1} gives
	\[
	B_m \le A_m + \sum_{\ell=1}^m (n+2)^\ell \binom{m}{\ell} n^{\ell-1}A_m
	= A_m \left[ 1 + \sum_{\ell=1}^m \binom{m}{\ell} (n+2)^\ell n^{\ell-1} \right].
	\]
	We simplify the bracket. Note that $(n+2)^\ell n^{\ell-1} = \frac{1}{n} (n^2+2n)^\ell$. Thus
	\begin{align*}
		1 + \sum_{\ell=1}^m \binom{m}{\ell} (n+2)^\ell n^{\ell-1}
		&= 1 + \frac{1}{n}\sum_{\ell=1}^m \binom{m}{\ell} (n^2+2n)^\ell
		\\&= 1 + \frac{1}{n}\left[(1+n^2+2n)^m - 1\right]
		\\&= \frac{(n+1)^{2m} + n - 1}{n}.
	\end{align*}
	Therefore
	\[
	B_m
	=
	\sum_{k=1}^n \frac{1}{|w_k|^{2m}}
	\;\le\;
	\frac{(n+1)^{2m} + n -1}{n}
	\sum_{j=1}^n \frac{1}{|z_j|^{2m}},
	\]
	which is exactly \eqref{eq:sharp-2m}.

	Finally, we verify the sharpness and the equality case in \eqref{eq:sharp-2m}.  
	If $z_1=\dots=z_n=z_0$, then
	\[
	P(z) = z(z-z_0)^n, \qquad
	P'(z) = (z-z_0)^{n-1}\bigl[(n+1)z - z_0\bigr],
	\]
	so $P'(z)$ has a zero at $z_0/(n+1)$ and $n-1$ zeros at $z_0$. Hence
	\[
	\sum_{k=1}^n \frac{1}{|w_k|^{2m}}
	=
	(n-1)\frac{1}{|z_0|^{2m}}
	+
	\frac{(n+1)^{2m}}{|z_0|^{2m}}
	=
	\frac{(n+1)^{2m} + n -1}{n} \sum_{j=1}^n \frac{1}{|z_j|^{2m}},
	\]
	so equality holds in \eqref{eq:sharp-2m} for these polynomials. Combined with the equality characterization in Theorem~\ref{thm:general-2m}, this shows that equality in~\eqref{eq:sharp-2m} holds if and only if $z_1=\cdots=z_n$ for every $m\ge 1$.\end{proof}

As simple applications of Theorem~\ref{thm:general-2m}, we now record the
Schoenberg type inequalities of orders $-4$ and $-6$.

Taking $m=2$ in \eqref{eq:general-2m} yields the following.
\begin{cor}\label{cor:order-4-from-general}
Let
	\[
	P(z) = z \prod_{j=1}^n (z - z_j), \qquad z_j \in \mathbb{C} \setminus \{0\},
	\]
	and let $w_1,\dots,w_n$ be the zeros of $P'(z)$. Then
	\begin{equation}\label{eq:order-4-from-general}
		\sum_{k=1}^n \frac{1}{|w_k|^{4}}
		\;\le\;
		(2n+5)\sum_{j=1}^n \frac{1}{|z_j|^{4}}
		\;+\;
		(n+2)^2 \left( \sum_{j=1}^n \frac{1}{|z_j|^{2}} \right)^{2}.
	\end{equation}
	Moreover, equality holds in~\eqref{eq:order-4-from-general} if and only if $z_1=\dots=z_n$. 
\end{cor}

\begin{remark}
	In general, inequality \eqref{eq:order-4-from-general}
	is weaker than the one in Theorem~\ref{thm:order-4-B2B2*}, since it is obtained
	by estimating
	\[
	\sum_{k=1}^n \frac{1}{|w_k|^{4}}
	\le \operatorname{tr}\left((BB^*)^{2}\right)
	\quad\text{rather than}\quad
	\operatorname{tr}\left(B^{2}(B^{2})^{*}\right),
	\]
	and we have $\operatorname{tr}\left(B^{2}(B^{2})^{*}\right)
	\le \operatorname{tr}\left((BB^*)^{2}\right)$ by Horn’s inequality; see \cite[pp.~177, 190]{HJ91}.
\end{remark}


Similarly, taking $m=3$ in \eqref{eq:general-2m} gives the following.
\begin{cor}\label{cor:order-6-from-general}
Let
	\[
	P(z) = z \prod_{j=1}^n (z - z_j), \qquad z_j \in \mathbb{C} \setminus \{0\},
	\]
	and let $w_1,\dots,w_n$ be the zeros of $P'(z)$. Then
	\begin{align}\label{eq:order-6}
		\sum_{k=1}^n \frac{1}{|w_k|^{6}}
		\;\le\;&\;
		(3n+7)\sum_{j=1}^n \frac{1}{|z_j|^{6}}
		\;+\; 3(n+2)^{2}
		\left(\sum_{j=1}^n \frac{1}{|z_j|^{2}}\right)
		\left(\sum_{j=1}^n \frac{1}{|z_j|^{4}}\right)\notag\\
		&\;+\; (n+2)^{3} \left(\sum_{j=1}^n \frac{1}{|z_j|^{2}}\right)^{3}.
	\end{align}
	Moreover, equality holds in \eqref{eq:order-6} if and only if $z_1=\dots=z_n$.
\end{cor}

\section{An interpolation approach to negative orders}\label{sec:negative_interpolation}

In this section we use complex interpolation to derive Schoenberg type inequalities of general negative orders, treating separately the ranges $(-\infty,-2]$ and $[-2,-1]$.

\begin{thm}\label{thm:negative-interpolation_for_p_2_infinity}
	Let
	\[
	P(z) = z \prod_{j=1}^n (z - z_j), \qquad z_j \in \mathbb{C}\setminus\{0\},
	\]
	and let $w_1,\dots,w_n$ be the zeros of $P'(z)$. Then for every 
	$p\ge 2$ we have
	\begin{equation}\label{eq:negative-interpolation-family}
		\sum_{k=1}^n \frac{1}{|w_k|^{p}}
		\;\le\;
		(n+1)^{p-2}(n+3)\sum_{j=1}^n \frac{1}{|z_j|^{p}}.
	\end{equation}
\end{thm}

\begin{proof}
As in the proof of Theorem~\ref{thm:BBstar-bound}, we write
\[
B := D^{-1} + b\mathbf{1}^{\top}, \quad 
D=\operatorname{diag}(z_1,\dots,z_n),\quad
b =\left(b_1,\dots,b_n\right)^{\top} = D^{-1}\mathbf{1}.
\]Set
	\[
	u_j := \frac1{z_j},\qquad u := (u_1,\dots,u_n)^\top.
	\]
	Then $D^{-1} = \diag(u)$ and $b=u$, so we may view $B$ as the image of
	$u$ under the linear map
	\begin{equation}\label{eq:definition_of_T_tilde_11}
	\widetilde{T} : \mathbb{C}^n \to \mathbb{C}^{n\times n},\qquad
	\widetilde{T}(u) := \diag(u) + u\mathbf{1}^\top.
\end{equation}
	
	For any $p\ge1$, Lemma \ref{lem:weyl} and \eqref{eq:eiganvalues_of_B_1} give
	\begin{equation}\label{eq:eig-vs-sing-B}
		\sum_{k=1}^n \frac{1}{|w_k|^{p}}
		= \sum_{k=1}^n \bigl|\lambda_k(B)\bigr|^{p}
		\le
		\|B\|_{S_p}^{p}
		= \|\widetilde{T}(u)\|_{S_p}^{p},
		\qquad p\ge1.
	\end{equation}We now estimate $\|\widetilde{T}(u)\|_{S_p}$ in terms of
	$\|u\|_{\ell^p}$ by interpolation between the endpoints $p=2$ and
	$p=\infty$.
	
	\smallskip\noindent\emph{Step 1: The case $p=2$.}
A direct computation gives
	\[
	\widetilde{T}(u) \widetilde{T}(u)^*
	= \diag(u) \diag(u)^*
	+ \diag(u) \mathbf{1}u^*
	+ u\mathbf{1}^\top \diag(u)^*
	+ u\mathbf{1}^\top\mathbf{1}u^*.
	\]
	Since $\diag(u) \diag(u)^* = \diag(|u_1|^2,\dots,|u_n|^2)$, $\diag(u) \mathbf{1}=u$, and
	$\mathbf{1}^\top\mathbf{1}=n$, we obtain
	\[
	\widetilde{T}(u) \widetilde{T}(u)^*
	= \diag(|u_1|^2,\dots,|u_n|^2) + (n+2)uu^*.
	\]Taking the trace, we find
	\[
	\|\widetilde{T}(u)\|_{S_2}^2
	= \operatorname{tr}\bigl(\widetilde{T}(u) \widetilde{T}(u)^*\bigr)
	= \sum_{j=1}^n |u_j|^2 + (n+2)\operatorname{tr}(uu^*)
	= (n+3)\sum_{j=1}^n |u_j|^2.
	\]
	Thus
	\begin{equation}\label{eq:Ttilde-l2-S2}
		\|\widetilde{T}(u)\|_{S_2}
		= \sqrt{n+3}\,\|u\|_{\ell^2},
		\qquad \|\widetilde{T}\|_{\ell^2(\mathbb{C}^n)\to S_2} = \sqrt{n+3}.
	\end{equation}

	\smallskip\noindent\emph{Step 2: The case $p=\infty$.} First, 
	\[
	\|\diag(u)\|_{S_\infty} = \max_{1\le j\le n} |u_j| = \|u\|_{\ell^\infty}.
	\] Next, the matrix $u\mathbf{1}^\top$ has rank one. Its nonzero singular
	value is the square root of the unique nonzero eigenvalue of
	$(u\mathbf{1}^\top)(u\mathbf{1}^\top)^*$. We compute
	\[
	(u\mathbf{1}^\top)(u\mathbf{1}^\top)^*
	= u\mathbf{1}^\top \mathbf{1}u^*
	= (\,\mathbf{1}^\top\mathbf{1}) u u^*
	= n u u^*.
	\]
	The matrix $uu^*$ has rank one with eigenvalue $\|u\|_{\ell^2}^2$, so
	$n uu^*$ has eigenvalues $n\|u\|_{\ell^2}^2$ (once) and $0$ (with
	multiplicity $n-1$). Therefore the unique nonzero singular value of
	$u\mathbf{1}^\top$ is
	\[
	\sigma_{\max}(u\mathbf{1}^\top)
	= \sqrt{n} \|u\|_{\ell^2},
	\]
	and hence
	\[
	\|u\mathbf{1}^\top\|_{S_\infty}
	= \sqrt{n} \|u\|_{\ell^2}.
	\]
	Using the elementary inequality $\|u\|_{\ell^2}\le \sqrt{n} \|u\|_{\ell^\infty}$
	we obtain
	\[
	\|u\mathbf{1}^\top\|_{S_\infty}
	\le \sqrt{n}\,\sqrt{n}\,\|u\|_{\ell^\infty}
	= n\,\|u\|_{\ell^\infty}.
	\] By the triangle inequality for $\|\cdot\|_{S_\infty}$ we now have
	\[
	\|\widetilde{T}(u)\|_{S_\infty}
	= \|\diag(u) + u\mathbf{1}^\top\|_{S_\infty}
	\le \|\diag(u)\|_{S_\infty} + \|u\mathbf{1}^\top\|_{S_\infty}
	\le (n+1)\|u\|_{\ell^\infty}.
	\]
	Thus
\begin{equation}\label{eq:tilde_T_S_infity_norm_11111}
\|\widetilde{T}\|_{\ell^\infty(\mathbb{C}^n) \to S_\infty}
	= \sup_{u\neq 0}
	\frac{\|\widetilde{T}(u)\|_{S_\infty}}{\|u\|_{\ell^\infty}}
	\le n+1.
	\end{equation} To see that the constant $n+1$ is optimal, we consider the special
	vectors $u=(t,\dots,t)^\top \in\C^n$ with $t\neq 0$. In this case
	\[
	\widetilde{T}(u)
	= \diag(t,\dots,t) + t\mathbf{1}\mathbf{1}^\top
	= t(I+J).
	\]The
	eigenvalues of $J$ are $n$ (once) and $0$ (with multiplicity $n-1$), so
	$I+J$ has eigenvalues $n+1$ and $1$. In particular,
	\[
	\|\widetilde{T}(u)\|_{S_\infty}
	= \sigma_{\max}\bigl(t(I+J)\bigr)
	= |t|(n+1)
	= (n+1)\|u\|_{\ell^\infty}.
	\]
	This shows that
	\[
	\|\widetilde{T}\|_{\ell^\infty(\mathbb{C}^n)\to S_\infty} \ge n+1,
	\]
	and combined with \eqref{eq:tilde_T_S_infity_norm_11111} we conclude that
	\begin{equation}\label{eq:Ttilde-linf-Sinf}
		\|\widetilde{T}\|_{\ell^\infty(\mathbb{C}^n)\to S_\infty} = n+1.
	\end{equation}

	\smallskip\noindent\emph{Step 3: Interpolation for $2<p<\infty$.}
	Consider the compatible Banach couples
	\[
	(\ell^\infty(\mathbb{C}^n),\ell^2(\mathbb{C}^n)),
	\qquad
	(S_\infty,S_2).
	\]
	By the Riesz--Thorin interpolation theorem for $L^p$ and for $S_p$ (see Theorem \ref{thm:RT_interpolation1} and Remark \ref{rmk:RT_interpolation1}), we have
	\[
	(\ell^\infty(\mathbb{C}^n),\ell^2(\mathbb{C}^n))_{[\theta]} = \ell^p(\mathbb{C}^n),
	\qquad
	(S_\infty,S_2)_{[\theta]} = S_p \quad \text{(with equal norms)},
	\]where $\theta = \frac{2}{p}$ and $p \in (2,\infty)$.

    From \eqref{eq:Ttilde-l2-S2}--\eqref{eq:Ttilde-linf-Sinf}, we apply the abstract complex interpolation theorem (see Theorem~\ref{thm:complex-interpolation}) to the two compatible Banach couples $(\ell^\infty(\mathbb{C}^n),\ell^2(\mathbb{C}^n))$ and $(S_\infty,S_2)$ and obtain\[
	\|\widetilde{T}(u)\|_{S_p}
	\;\le\; 
	\|\widetilde{T}\|_{\ell^\infty(\mathbb{C}^n)\to S_\infty}^{\,1-\theta}
	\|\widetilde{T}\|_{\ell^2(\mathbb{C}^n)\to S_2}^{\,\theta}
	\,\|u\|_{\ell^p}
	\le
	(n+1)^{1-\theta}(n+3)^{\frac{\theta}{2}}\,\|u\|_{\ell^p}.
	\]
	Substituting $\theta = \frac{2}{p}$ gives
	\begin{equation}\label{eq:Ttilde-lp-Sp}
		\|\widetilde{T}(u)\|_{S_p}
		\;\le\;
		(n+1)^{1-\frac{2}{p}}(n+3)^{\frac{1}{p}}\,\|u\|_{\ell^p},
		\qquad 2<p<\infty.
	\end{equation}
	For $p=2$ this inequality reduces to \eqref{eq:Ttilde-l2-S2}, so
	\eqref{eq:Ttilde-lp-Sp} holds for all $p\ge 2$. Combining \eqref{eq:eig-vs-sing-B} and \eqref{eq:Ttilde-lp-Sp} with
	$u_j=1/z_j$ yields, for every $p\ge 2$,
	\[
	\Biggl(\sum_{k=1}^n \frac{1}{|w_k|^{p}}\Biggr)^{\!1/p}
	\le \|\widetilde{T}(u)\|_{S_p}
	\le (n+1)^{1-\frac{2}{p}}(n+3)^{\frac{1}{p}}
	\Biggl(\sum_{j=1}^n \frac{1}{|z_j|^{p}}\Biggr)^{\!1/p}.
	\]
	Raising both sides to the $p$th power gives
	\[
	\sum_{k=1}^n \frac{1}{|w_k|^{p}}
	\;\le\;
	(n+1)^{p-2}(n+3)\sum_{j=1}^n \frac{1}{|z_j|^{p}},
	\]
	which is exactly \eqref{eq:negative-interpolation-family}. This completes the proof.
\end{proof}

\begin{remark}
	The family of inequalities in Theorem~\ref{thm:negative-interpolation_for_p_2_infinity}
	is obtained by complex interpolation of the endpoint bounds for the linear
	map $\widetilde{T}$, and the constants are therefore not expected to be
	optimal in general. This phenomenon already appears in the case $p=4$. Indeed, writing $A_r = \sum_{j=1}^n \frac{1}{|z_j|^{2r}}$, the inequality~\eqref{eq:negative-interpolation-family} with $p=4$ reads
	\[
	\sum_{k=1}^n \frac{1}{|w_k|^{4}}
	\;\le\;
	(n+1)^2(n+3)\,A_2.
	\]
	On the other hand, Corollary~\ref{cor:order-4-from-general} gives the
	sharper fourth-order inequality
	\[
	\sum_{k=1}^n \frac{1}{|w_k|^{4}}
	\le
	(2n+5)A_2 + (n+2)^2 A_1^2.
	\]
	Cauchy--Schwarz yields $A_1^2\le n A_2$, and hence
	\[
	(2n+5)A_2 + (n+2)^2 A_1^2
	\le
	\left(n(n+2)^2 + 2n + 5\right) A_2.
	\] A simple computation shows that \(n(n+2)^2 + 2n + 5<(n+1)^2(n+3)\) for every $n\ge 2$. Thus, for $n\ge 2$ and all choices of
	$z_1,\dots,z_n\in\C\setminus\{0\}$,
	\[
	(2n+5)A_2 + (n+2)^2 A_1^2
	< (n+1)^2(n+3) A_2,
	\]
	so Corollary~\ref{cor:order-4-from-general} yields a strictly better
	upper bound at order $-4$ than the interpolation inequality
	\eqref{eq:negative-interpolation-family}. In particular, the constants
	in Theorem~\ref{thm:negative-interpolation_for_p_2_infinity} are not
	optimal for $p=4$, and one should expect the same non-optimality for
	every finite $p>2$.
\end{remark}
\begin{remark}
	We emphasize that the endpoints are sharp: for $p=2$ the constant
	$n+3$ in \eqref{eq:negative-interpolation-family} coincides with the
	optimal one in Theorem~\ref{thm:BBstar-bound}, and in the limit $p\to\infty$
	the inequality recovers the sharp bound
	$\min_k |w_k| \ge (n+1)^{-1}\min_j |z_j|$, with equality whenever
	$z_1=\dots=z_n$.
\end{remark}

Theorem~\ref{thm:negative-interpolation_for_p_2_infinity} is obtained by interpolating between the two sharp endpoint cases $p=2$ and $p=\infty$. Moreover, by Corollary~\ref{cor:sharp-2m} we know that for every even exponent $p=2m$ one also has a sharp inequality. Thus, by employing a more refined (albeit more involved, and still not sufficient to fully resolve Conjecture~\ref{conj:negative_order_p_11_1}) interpolation scheme, we obtain the following result.

\begin{thm}\label{thm:refined-interpolation}
Let
\[
  P(z) = z\prod_{j=1}^n (z-z_j), \qquad z_j\in\C\setminus\{0\},
\]
and let $w_1,\dots,w_n$ be the zeros of $P'(z)$. Then for every $p\ge 2$ we have
\begin{equation}\label{eq:refined-interpolation}
  \sum_{k=1}^n \frac{1}{|w_k|^p}
  \;\leq\;
  C(n,p)\sum_{j=1}^n \frac{1}{|z_j|^p},
\end{equation}
where $\alpha:=\frac{p}{2} - \lfloor \frac{p}{2} \rfloor \in[0,1]$, and set
\[
  C(n,p)
  := C_m(n)^{1-\alpha} C_{m+1}(n)^{\alpha},
  \qquad
  C_m(n) := \frac{(n+1)^{2m}+n-1}{n}.
\]\end{thm}
Clearly, the inequality \eqref{eq:refined-interpolation} improves the bound obtained in Theorem~\ref{thm:negative-interpolation_for_p_2_infinity} for every $p>2$. The proof of Theorem~\ref{thm:refined-interpolation} is similar in spirit to that of Theorem~\ref{thm:negative-interpolation_for_p_2_infinity}. Since the argument is somewhat technical, we defer the full proof to Appendix~\ref{appendix:refined-interpolation}.

Next, we aim to obtain an analogue for orders in the range $[-2,-1]$ using a similar interpolation strategy. 
Before doing so, we establish the following lemma on the operator norm of $\widetilde{T}$.

\begin{lem}\label{lem:T-l1-S1}
	Let $\widetilde{T}$ be as defined in \eqref{eq:definition_of_T_tilde_11}. Then
	\[
	\|\widetilde{T}\|_{\ell^1(\mathbb{C}^n)\to S_1}
	= \sup_{u\neq 0} \frac{\|\widetilde{T}(u)\|_{S_1}}{\|u\|_{\ell^1}}
	= \sqrt{n+3}.
	\]
	In particular, for every $u\in\C^n$,
	\[
	\|\widetilde{T}(u)\|_{S_1} \le \sqrt{n+3}\,\|u\|_{\ell^1}.
	\]
\end{lem}

\begin{proof}
	We first recall a standard fact: if $L:\ell^1(\mathbb{C}^n)\to X$ is a linear map
	into a Banach space $X$, then
	\begin{equation}\label{eq:l1-opnorm-basis}
		\|L\|_{\ell^1(\mathbb{C}^n)\to X}
		= \max_{1\le j\le n} \|L(e_j)\|_X,
	\end{equation}
	where $e_j$ is the $j$th standard basis vector of $\C^n$.
	For completeness we include a short proof of~\eqref{eq:l1-opnorm-basis}. Indeed, let $u=\sum_{j=1}^n \alpha_j e_j$ with
	$\|u\|_{\ell^1}=\sum_{j=1}^n|\alpha_j|\le 1$. By linearity and the triangle
	inequality,
	\[
	\|L(u)\|_X
	= \left\| \sum_{j=1}^n \alpha_j L(e_j)\right\|_X
	\le \sum_{j=1}^n |\alpha_j| \|L(e_j)\|_X
	\le \left(\max_{1\le j\le n} \|L(e_j)\|_X\right)\sum_{j=1}^n|\alpha_j|
	\le \max_{1\le j\le n} \|L(e_j)\|_X.
	\]
	Taking the supremum over all $u$ with $\|u\|_{\ell^1}\le 1$ yields
	$\|L\|_{\ell^1(\mathbb{C}^n)\to X} \le \max_j\|L(e_j)\|_X$. The reverse inequality is
	obtained by testing \eqref{eq:l1-opnorm-basis} on $u=e_j$, which shows
	$\|L(e_j)\|_X \le \|L\|_{\ell^1(\mathbb{C}^n)\to X}$ for each $j$, and hence
	$\max_j\|L(e_j)\|_X \le \|L\|_{\ell^1(\mathbb{C}^n)\to X}$.
	
	Applying \eqref{eq:l1-opnorm-basis} to $L=\widetilde{T}$ and $X=S_1$,
	we obtain
	\[
	\|\widetilde{T}\|_{\ell^1(\mathbb{C}^n)\to S_1}
	= \max_{1\le j\le n} \|\widetilde{T}(e_j)\|_{S_1}.
	\]
	By symmetry it suffices to compute $\|\widetilde{T}(e_1)\|_{S_1}$,
	where $e_1=(1,0,\dots,0)^\top$. We have
	\[
	\diag(e_1) = \diag(1,0,\dots,0),\qquad
	e_1\mathbf{1}^\top =
	\begin{pmatrix}
		1 & 1 & \cdots & 1\\
		0 & 0 & \cdots & 0\\
		\vdots & \vdots & & \vdots\\
		0 & 0 & \cdots & 0
	\end{pmatrix},
	\]
	so
	\[
	\widetilde{T}(e_1)
	= \diag(e_1) + e_1\mathbf{1}^\top
	=
	\begin{pmatrix}
		2 & 1 & 1 & \cdots & 1\\
		0 & 0 & 0 & \cdots & 0\\
		\vdots & \vdots & \vdots & & \vdots\\
		0 & 0 & 0 & \cdots & 0
	\end{pmatrix}
	= e_1 r^\top,
	\]
	where $r=(2,1,\dots,1)^\top\in\C^n$. Thus $\widetilde{T}(e_1)$ has rank
	one. For a rank-one matrix $uv^\top$ the singular values are
	$\{\|u\|_{\ell^2}\|v\|_{\ell^2},0,\dots,0\}$, so 
	$\|uv^\top\|_{S_1} = \|u\|_{\ell^2}\|v\|_{\ell^2}$. Therefore
	\[
	\|\widetilde{T}(e_1)\|_{S_1}
	= \|e_1 r^\top\|_{S_1}
	= \|e_1\|_{\ell^2} \|r\|_{\ell^2}.
	\]
	We have $\|e_1\|_{\ell^2}=1$ and
	\[
	\|r\|_{\ell^2}^2 = |2|^2 + (n-1)\cdot|1|^2 = 4 + (n-1) = n+3,
	\]
	so $\|r\|_{\ell^2} = \sqrt{n+3}$ and hence \(\|\widetilde{T}(e_1)\|_{S_1} = \sqrt{n+3}\). By symmetry, the same computation shows
	$\|\widetilde{T}(e_j)\|_{S_1} = \sqrt{n+3}$ for all $j=1,\dots,n$, and
	therefore
	\[
	\|\widetilde{T}\|_{\ell^1(\mathbb{C}^n)\to S_1}
	= \max_j \|\widetilde{T}(e_j)\|_{S_1}
	= \sqrt{n+3}.
	\]
	This proves the lemma.
\end{proof}

\begin{thm}\label{thm:negative-1<p<2}
	Let
	\[
	P(z) = z\prod_{j=1}^n (z-z_j), \qquad z_j\in\C\setminus\{0\},
	\]
	and let $w_1,\dots,w_n$ be the zeros of $P'(z)$. Then for every
	$1\le p\le 2$ we have
	\begin{equation}\label{eq:negative-p-between-1-and-2}
		\sum_{k=1}^n \frac{1}{|w_k|^{p}}
		\le
		(n+3)^{\frac{p}{2}}\sum_{j=1}^n \frac{1}{|z_j|^{p}}.
	\end{equation}
\end{thm}

\begin{proof}
As in the proof of Theorem~\ref{thm:negative-interpolation_for_p_2_infinity}, set $u_j := 1/z_j$, $u := (u_1,\dots,u_n)^\top$, and define the linear map
	\begin{equation}\label{eq:definition_of_T_tilde_22}
	\widetilde{T} : \mathbb{C}^n \to \mathbb{C}^{n\times n},\qquad
	\widetilde{T}(u) := \diag(u) + u\mathbf{1}^\top.
\end{equation}
By~\eqref{eq:eig-vs-sing-B}, we have
	\begin{equation}\label{eq:eig-vs-sing-B-2222}
		\sum_{k=1}^n \frac{1}{|w_k|^{p}}
		= \sum_{k=1}^n \bigl|\lambda_k(B)\bigr|^{p}
		\le
		\|B\|_{S_p}^{p}
		= \|\widetilde{T}(u)\|_{S_p}^{p},
		\qquad p\ge1.
	\end{equation}We now estimate $\|\widetilde{T}(u)\|_{S_p}$ in terms of
	$\|u\|_{\ell^p}$ by interpolation between the endpoints $p=1$ and
	$p=2$.

By Lemma~\ref{lem:T-l1-S1} we have \(\|\widetilde{T}\|_{\ell^1(\mathbb{C}^n)\to S_1} = \sqrt{n+3}\). On the other hand, \eqref{eq:Ttilde-l2-S2} gives \(\|\widetilde{T}\|_{\ell^2(\mathbb{C}^n)\to S_2} = \sqrt{n+3}\). Consider now the compatible Banach couples
	\[
	(\ell^1(\mathbb{C}^n),\ell^2(\mathbb{C}^n)),\qquad (S_1,S_2).
	\]
	By complex interpolation (see Theorem \ref{thm:RT_interpolation1} and Remark \ref{rmk:RT_interpolation1}), for each $0<\theta<1$ we have
	\[
	(\ell^1(\mathbb{C}^n),\ell^2(\mathbb{C}^n))_{[\theta]} = \ell^p(\mathbb{C}^n),\qquad
	(S_1,S_2)_{[\theta]} = S_p \quad \text{(with equal norms)},
	\]
	where $p$ is determined by
	\[
	\frac{1}{p} = \frac{1-\theta}{1} + \frac{\theta}{2}
	= 1 - \frac{\theta}{2},
	\quad\text{so that}\quad
	p\in[1,2].
	\]
	Since $\widetilde{T}:\ell^1(\mathbb{C}^n)\to S_1$ and $\widetilde{T}:\ell^2(\mathbb{C}^n)\to S_2$
	are bounded with
	\[
	\|\widetilde{T}\|_{\ell^1(\mathbb{C}^n)\to S_1}
	=\|\widetilde{T}\|_{\ell^2(\mathbb{C}^n)\to S_2} = \sqrt{n+3},
	\]
	the abstract interpolation theorem yields, for all $1\le p\le 2$,
	\begin{equation}\label{eq:T-lp-Sp}
		\|\widetilde{T}(u)\|_{S_p}
		\;\le\; (\sqrt{n+3})^{1-\theta}(\sqrt{n+3})^{\theta}\,\|u\|_{\ell^p}
		= \sqrt{n+3}\,\|u\|_{\ell^p}.
	\end{equation}
Combining \eqref{eq:eig-vs-sing-B-2222} and \eqref{eq:T-lp-Sp} with
	$u_j=1/z_j$ now gives
	\[
	\Biggl(\sum_{k=1}^n \frac{1}{|w_k|^{p}}\Biggr)^{\!1/p}
	\le \|\widetilde{T}(u)\|_{S_p}
	\le \sqrt{n+3}\,\Biggl(\sum_{j=1}^n \frac{1}{|z_j|^{p}}\Biggr)^{\!1/p},
	\qquad 1\le p\le 2.
	\]
	Raising both sides to the $p$th power yields
	\[
	\sum_{k=1}^n \frac{1}{|w_k|^{p}}
	\;\le\;
	(n+3)^{\,p/2}\sum_{j=1}^n \frac{1}{|z_j|^{p}},
	\qquad 1\le p\le 2,
	\]
	which is exactly \eqref{eq:negative-p-between-1-and-2}. This completes the proof.
\end{proof}

\begin{remark}
	It is tempting to hope that the sharp inequality
	\[
	\sum_{k=1}^n \frac{1}{|w_k|}
	\;\le\;
	2\sum_{j=1}^n \frac{1}{|z_j|}
	\]
	from Theorem~\ref{cor:minus-one} might be interpreted as an improved
	endpoint estimate of the form
	\[
	\|\widetilde{T}(u)\|_{S_1} \le 2\|u\|_{\ell^1}
	\]
	and then interpolated with the $\ell^2(\mathbb{C}^n)\to S_2$ bound to produce  stronger
	inequalities for $1<p<2$. However, this is not the case. Indeed, in the operator norm
	\[
	\|\widetilde{T}\|_{\ell^1(\mathbb{C}^n)\to S_1}
	= \sup_{u\neq 0} \frac{\|\widetilde{T}(u)\|_{S_1}}{\|u\|_{\ell^1}},
	\]
	the supremum is taken over all vectors $u\in\C^n$, and
	Lemma~\ref{lem:T-l1-S1} shows that the exact value is $\sqrt{n+3}$.
	In contrast, Theorem~\ref{cor:minus-one} only concerns those vectors
	$u=(1/z_1,\dots,1/z_n)$ arising from polynomials $P(z)$ with
	$z_j\in\C\setminus\{0\}$; in other words, $u$ is restricted to the
	nonlinear set $\widetilde{V}:=\C^n\setminus\{0\} \subset\C^n$. One may view the constant $2$
	as a Lipschitz-type bound for $\widetilde{T}$ on $\widetilde{V}$, but $\widetilde{V}$ is neither
	a linear subspace nor a Banach space (it does not contain $0$ and is not
	closed under addition), so it cannot serve as an endpoint space in the
	standard complex interpolation theory. Thus the sharp factor $2$ at order $-1$ cannot be used as an endpoint
	operator norm in a Banach couple $(\ell^1(\mathbb{C}^n),\ell^2(\mathbb{C}^n))$, and the interpolation
	argument above must rely on the coarser but genuine operator bound
	$\|\widetilde{T}\|_{\ell^1(\mathbb{C}^n)\to S_1} = \sqrt{n+3}$.
\end{remark}

\section{Dual Schoenberg type inequalities}\label{sec: dual cases for order -2}
Conjecture~\ref{conj1.2} prompts us to seek lower bound estimates for $\sum_k 1/|w_k|^2$ in terms of $z_1,\ldots, z_n$. In this section, we consider the dual scenario, namely, we provide some upper bounds for $\sum_j 1/|z_j|^2$ in  terms of $w_1, \ldots, w_n$, where we consider $P(z)$ as a full integral of $ P'(z)$.

\subsection{Matrix integrability and full integrability of polynomials} 

Let \( B \in M_n(\mathbb{C}) \), \( A \in M_{n+1}(\mathbb{C}) \). We adopt the notation and terminology of \cite[Definition~1.3]{DGN21} and \cite[Definition~3]{Bha07}. We say that \(A\) is an \emph{integral of \(B\)} if
\begin{eqnarray}\label{mA}
	A = \begin{bmatrix}
		B & u^T \\
		v & \tau(B)
	\end{bmatrix},
\end{eqnarray}
and also \( p_B(x) = \frac{1}{n+1}p_A'(x) \). In this case, $\tau(B)=\frac{\tr B}{n}$ is the normalized trace of $B$,  the pair of vectors \((u,v)\) is called an \textit{integrator of \( B \)} and the element \(\det(A)\) is called a \textit{constant of integration}. 

The polynomial $P\in \mathbb{C}[x]$ is called a \emph{full integral} of the polynomial $q\in \mathbb{C}[x]$, if $P'=q$ and for any $\lambda\in \mathbb{C}$ satisfying $(x-\lambda)^2|q$ we have that $(x-\lambda)|P$; see \cite[Definition~2.3]{DGN21} or \cite[Definition~1.3]{DG20}. In other words, any multiple zero of the polynomials $q$ is a zero of $P$. We say that $q$ is a polynomial of \emph{type} $(k,m)$, if $q$ has $k$ different simple zeros and $m$ different multiple zeros, where $k$ and $m$ are nonegative integers; see \cite[Definition~2.1]{DGN21} or \cite[Definition 1.1]{DG20}. Specially, if $m\le 1$, then $q$ has a full integral; if $m>k+1$, then  $q$ doesn't have a full integral \cite[Theorem 2.14]{DGN21}. More details about full integrability of polynomials, we refer the readers to \cite{DG20, DGN21}. 

Danielyan,  Guterman and  Ng \cite[Theorem 3.8]{DGN21} asserts that $B$ in \eqref{mA} is integrable if and only if its characteristic polynomial $p_B$ has a full integral. As an application, they derived a dual version of Schoenberg inequality \cite[Theorem 5.1]{DGN21}.

\subsection{Dual Schoenberg type inequality of order $-2$}
We now illustrate our results by obtaining upper bounds for 
\(\sum_{j} 1/|z_j|^2\) in terms of \(w_1,\ldots,w_n\).

\begin{thm} \label{thm:(u,v), upper bounds}
	Let $A$ defind by~\eqref{mA} be an integral of $B=\diag(w_1,\ldots, w_n)$ with an integrator $(u,v)$. Let $p_A=\prod_{j=1}^{n+1}(z-z_j)$ and $p_B=\prod_{k=1}^n(z-w_k)$ be the characteristic polynomials of $A, B$ respectively, where all $z_i, w_k\in \mathbb{C}\setminus \{0\}$. Then  
	\begin{equation}\label{dual1}
		\sum_{j=1}^{n+1} \frac{1}{|z_j|^{2}}
		\le\sum_{k=1}^n\frac{1}{|w_k|^2}+|c|^2 \left(\sum_{k=1}^n \frac{|u_k|^2}{|w_k|^2}  \sum_{l=1}^n \frac{|v_l|^2}{|w_l|^2} + \sum_{k=1}^n\frac{|u_k|^2+|v_k|^2}{|w_k|^2}+1\right) + 2\Re\left( c \sum_{k=1}^n \frac{u_k v_k}{w_k|w_k|^2}  \right),
	\end{equation} where $c:=\left(\tfrac{\sum_{k=1}^n w_k}{n}-\sum_{k=1}^n\tfrac{u_k v_k}{w_k}\right)^{-1}$. Moreover, equality in~\eqref{dual1} holds if and only if $A$ is normal.
\end{thm}
\begin{proof}
	By \eqref{mA}, the inverse of the partitioned matrix $A$ can be written (see \cite[Theorem~2.4]{Zh11}) as
	\begin{align*}
		A^{-1} = \begin{bmatrix}
			B^{-1} + B^{-1} u^\top S^{-1} v B^{-1} & -B^{-1} u^\top S^{-1} \\
			-S^{-1} v B^{-1} & S^{-1}
		\end{bmatrix}, \quad S = \tau(B) - v B^{-1} u^\top.
	\end{align*}
	The eigenvalues of $A^{-1}$ are precisely $\{1/z_1,\ldots,1/z_{n+1}\}$. By Lemma~\ref{lem:weyl} we obtain
	\begin{align}\label{Sch}
		\sum_{j=1}^{n+1}\frac{1}{|z_j|^2}=\sum_{j=1}^{n+1} |\lambda_j(A^{-1})|^2
		\le
		\sum_{j=1}^{n+1} \sigma_j(A^{-1})^2
		&=\|A^{-1}\|_F^2.
	\end{align} with equality if and only if $A^{-1}$ is normal, equivalently, $A$ is normal.
    
Next we compute the Frobenius norm:
	\begin{align}\label{Fro}
		\|A^{-1}\|_F^2=&\|B^{-1} + B^{-1} u^\top S^{-1} v B^{-1}\|_F^2+\|B^{-1} u^\top S^{-1}\|_F^2+\|S^{-1} v B^{-1}\|_F^2+\|S^{-1}\|_F^2.
	\end{align}
	Let $D=B^{-1}=\diag(d_1,\ldots,d_n)$, $u=(u_1,\ldots,u_n)$ and $v=(v_1,\ldots,v_n)$. Then $c:=S^{-1}=\left(\tfrac{\sum_{k=1}^n w_k}{n}-\sum_{k=1}^n u_k d_k v_k\right)^{-1}$. A direct computation yields
	\begin{align*}
		\|B^{-1} + B^{-1} u^\top S^{-1} v B^{-1}\|_F^2&=\|D + c D u^\top  v D\|_F^2\\
		&=\|D\|_F^2+|c|^2\|Du^\top v D\|_F^2+2\Re \tr \left(c D^* Du^\top vD \right)\\
		&=\sum_{k=1}^n |d_k|^2 + |c|^2 \left( \sum_{k=1}^n |d_k u_k|^2 \right) \left( \sum_{l=1}^n |d_l v_l|^2 \right) + 2\Re\left( c \sum_{k=1}^n u_k v_k |d_k|^2 d_k \right),\\
		\|B^{-1} u^\top S^{-1}\|_F^2&=|c|^2\|Du^\top\|_F^2=|c|^2\sum_{k=1}^n|d_k|^2|u_k|^2,\\
		\|S^{-1} v B^{-1}\|_F^2&=|c|^2\|vD\|_F^2=|c|^2\sum_{k=1}^n|d_k|^2|v_k|^2,\\
		\|S^{-1}\|_F^2&=|c|^2.
	\end{align*}
Combining these with \eqref{Sch} and \eqref{Fro}, and using $d_k=1/w_k$, we obtain exactly~\eqref{dual1}.
\end{proof}
We now specialize the choice of $B$ in~\eqref{mA}. For simplicity, we write
\begin{align*}
	B &=\diag (w_1,\ldots, w_n)\\ 
	&=\operatorname{diag} \, (\underbrace{b_1, \ldots, b_1}_{\alpha_1} , \ldots, 
	\underbrace{b_m, \ldots, b_m}_{\alpha_m}, a_1, \ldots, a_k),
\end{align*}where $a_1, \ldots, a_k, b_1, \ldots, b_m \in \C$ are pairwise distinct, 
$k,m \geq 0$, and $\alpha_1, \ldots, \alpha_m \geq 2$.

Danielyan, Guterman and Ng~\cite[Corollary~3.12]{DGN21} showed that if $p_B$ has a full integral $\tfrac{1}{n+1}p_A$, then $(u,v)$ can be chosen as
\begin{align}
u=v=\Big(
	\underbrace{0,\ldots,0}_{\sum_{s=1}^m\alpha_s}, \sqrt{\tfrac{p_A(a_1)}{h_{a_1}(a_1)}}, \ldots,\sqrt{\tfrac{p_A(a_k)}{h_{a_k}(a_k)}}\Big), \tag{Ch1}\label{cho1}
\end{align}
where $h(x)=\prod_{l=1}^k(x-a_l)$, $h_{a_i}(x)=\frac{h(x)}{x-a_i}$ and $\sqrt{\frac{p_A(a_i)}{h_{a_i}(a_i)}}$ represtents  any square root of $\frac{p_A(a_i)}{h_{a_i}(a_i)}$ for any $1\le i\le k$. It should be noted that the choice \eqref{cho1} minimizes the Frobenius norm of $A$; see \cite[Corollary~3.12]{DGN21}. 

In Theorem~\ref{thm:(u,v), upper bounds}, taking $(u,v)$ as in Choice~\eqref{cho1} yields the following corollary.

\begin{cor}
	Let
\[
  P(z) =  \prod_{j=1}^{n+1} (z - z_j),
\]
be a full integral of $P'(z)$, and let the zeros of $P'(z)$ be 
$a_1, \ldots, a_k, b_1, \ldots, b_m$, with multiplicities 
\(\underbrace{1,\ldots, 1}_k,\alpha_1,\ldots, \alpha_m\), respectively, 
where $z_j,a_j,b_j\in \mathbb{C}\setminus \{0\}$, $k, m \geq 0$, and $\alpha_1, \ldots, \alpha_m \geq 2$. 
Then 
	\begin{align*}
		\sum_{j=1}^{n+1} \frac{1}{|z_j|^{2}}
		\le&\sum_{j=1}^m \frac{\alpha_j}{|b_j|^2} + \sum_{j=1}^k \frac{1}{|a_j|^2} + |c|^2 \left( 1 + \sum_{j=1}^k \frac{|r_j|}{|a_j|^2} \right)^2 + 2\Re \left( c \sum_{j=1}^k \frac{r_j}{a_j |a_j|^2} \right),
	\end{align*}where $c:=\left(G-\sum_{j=1}^k\frac{r_j}{a_j}\right)^{-1}$, $r_j=\frac{P(a_j)}{\prod_{l\neq j}(a_j-a_l)}$ and $G=\frac{\sum_{j=1}^m\alpha_jb_j+\sum_{j=1}^ka_j}{n}$. Moreover, equality holds if and only if $r_j(\overline{a_j-G})^2$ is non-negative for every $j=1,\ldots, k$.
\end{cor}


\begin{proof}Taking $(u,v)$ as in Choice~\eqref{cho1} in Theorem~\ref{thm:(u,v), upper bounds} yields the desired inequality. For the equality condition, note that equality holds if and only if $A$ is normal, i.e., $AA^* = A^* A$. 
Here and in what follows, for a complex matrix $M$ and a complex vector $x$ we write $\overline{M}$ and $\overline{x}$ for their entrywise complex conjugates. 
A direct computation shows that  
\begin{eqnarray}\label{e:AA^*}
		A^* A = \begin{pmatrix} 
			\overline{B}B + \overline{v}^T v & \overline{B}u^T + \tau(B)\overline{v}^T \\ 
			\overline{u}B + \overline{\tau(B)}v & \overline{u}u^T + |\tau(B)|^2 
		\end{pmatrix}, \quad 
		AA^* = \begin{pmatrix} 
			B\overline{B} + u^T\overline{u} & B\overline{v}^T + \overline{\tau(B)}u^T \\ 
			v\overline{B} + \tau(B)\overline{u} & v\overline{v}^T + |\tau(B)|^2 
		\end{pmatrix}.
\end{eqnarray}
Since $B\overline{B} = \overline{B}B$ and $u=v$ in Choice~\eqref{cho1}, the difference simplifies to  
	$$
	AA^* - A^* A = \begin{pmatrix} 
		O & \left(B - \tau(B)\right) \overline{v}^T - \left(\overline{B} - \overline{\tau(B)}\right)v^T \\ 
		v \left(\overline{B} - \overline{\tau(B)}\right) - \overline{v}\left(B - \tau(B)\right) & O 
	\end{pmatrix}.
	$$Thus $A$ is normal if and only if this difference is the zero matrix. Since $B$ is diagonal, this is equivalent to  
\[
  v_{d_i} (\overline{a_i - \tau(B)}) = \overline{v_{d_i}} (a_i - \tau(B)) \quad \text{for all } i = 1, \ldots, k,
\]
where $d_i=\sum_{j=1}^m\alpha_j+i$.  
Substituting the expression for $v_{d_i}$, we obtain the equivalent condition  
\[
  \sqrt{\frac{P(a_i)}{h_{a_i}(a_i)}}  (\overline{a_i - \tau(B)}) 
  = \overline{\left( \sqrt{\frac{P(a_i)}{h_{a_i}(a_i)}}\right)} (a_i - \tau(B))  
  \quad \text{for all } i = 1, \ldots, k,
\]
which means $\sqrt{r_i}(\overline{a_i-G}) $ is real, equivalently, $r_i(\overline{a_i-G})^2$ is non-negative for all $1\le i\le k$.
\end{proof}


Specially,  if all the critical points of a polynomial $P$ are distinct, then $P$ is a full integral of $P'$. It then follows from
\cite[Theorem 3.13, case 1]{DGN21} that we have the following dual Schoenberg type inequality for order $-2$.
\begin{cor}\label{cor:dual -2}
Let
\[
  P(z) = \prod_{j=1}^{n+1} (z - z_j),
\]
and let $w_1, \ldots, w_n$ be the zeros of $P'(z)$.
Assume that all $z_j,w_k$ lie in $\C \setminus \{0\}$ and that $w_1,\ldots,w_n$ are distinct. Then
\[
\sum_{j=1}^{n+1} \frac{1}{|z_j|^{2}}
\le
\sum_{k=1}^n \frac{1}{|w_k|^2}
+ |c|^2 \left( 1 + \sum_{k=1}^n \frac{|r_k|}{|w_k|^2} \right)^2
+ 2\Re \left( c \sum_{k=1}^n \frac{r_k}{w_k |w_k|^2} \right), 
\]
where
\[
c := \left(G - \sum_{k=1}^n\frac{r_k}{w_k}\right)^{-1}, \qquad
G = \frac{1}{n}\sum_{k=1}^n w_k, \qquad
r_k = \frac{(n+1)P(w_k)}{P''(w_k)}.
\]
Moreover, equality is attained if and only if $r_k(\overline{w_k-G})^2$ is non-negative for every $k=1,\ldots,n$.
\end{cor}

\section*{Acknowledgments}
The second author  was supported by Young Elite Scientists Sponsorship Program for PhD Students (China Association for Science
and Technology)  and the Fundamental Research Funds for the Central Universities at Xi'an Jiaotong University (Grant No. xzy022024045). The authors are grateful to Prof.~Minghua Lin and Jiachen Shi for numerous insightful discussions and valuable suggestions, which greatly improved the presentation of this paper. The authors are also thankful to Zehao Zhang for providing an elegant proof of Lemma~\ref{lem:real2} via a completing-the-square argument. Finally, the authors would like to thank Shengtong Zhang for his helpful remarks on the results.

\appendix

\section{Proof of the sum-of-squares identity~\eqref{eq:sos-identity}}\label{app:identity-proof}

Throughout this section we set $m:=n-1$ and work with
\[
S_1=\sum_{i=1}^{m}a_i,\qquad
S_2=\sum_{i=1}^{m}a_i^2,\qquad
S_3=\sum_{i=1}^{m}a_i^3,\qquad
S_4=\sum_{i=1}^{m}a_i^4.
\]
All pairwise sums $\sum_{i<j}$ are taken over $1\le i<j\le m$, 
and by $\sum_{i\ne j}$ we mean the sum over all ordered pairs $(i,j)$ with $1\le i,j\le m$ and $i\ne j$. 
Similarly, $\sum_{k\ne i,j}$ denotes the sum over all indices $k$ satisfying $1\le k\le m$ and $k\ne i,j$.

\medskip
First, we prove the following five useful identities~(A1)--(A5).

\medskip
\noindent\textbf{(A1)} \(\displaystyle \sum_{i<j}(a_i-a_j)^2 = m S_2 - S_1^2\).

\begin{proof}
We use the standard symmetrization:
\[
\sum_{i<j}(a_i-a_j)^2
=\frac12\sum_{i\ne j}(a_i-a_j)^2
=\frac12\sum_{i\ne j}(a_i^2+a_j^2-2a_i a_j).
\]
The three terms give, respectively,
\(\sum_{i\ne j}a_i^2=(m-1)\sum_i a_i^2=(m-1)S_2\),
the same for \(\sum_{i\ne j}a_j^2\), and
\(\sum_{i\ne j}2a_i a_j=2\left((\sum_i a_i)^2-\sum_i a_i^2\right)=2(S_1^2-S_2)\).
Altogether,
\[
\sum_{i<j}(a_i-a_j)^2
=\tfrac12\left(2(m-1)S_2-2(S_1^2-S_2)\right)=mS_2-S_1^2.\qedhere
\]
\end{proof}

\medskip
\noindent\textbf{(A2)} \(\displaystyle \sum_{i<j}(a_i-a_j)^2(a_i+a_j) = m S_3 - S_1 S_2\).

\begin{proof}
Expand
\((a_i-a_j)^2(a_i+a_j)=(a_i^3+a_j^3)-a_i a_j(a_i+a_j)\).
Hence
\[
\sum_{i<j}(a_i-a_j)^2(a_i+a_j)
=\underbrace{\sum_{i<j}(a_i^3+a_j^3)}_{=(m-1)S_3}
-\underbrace{\sum_{i<j}(a_i^2 a_j+a_i a_j^2)}_{=\sum_{i\ne j}a_i^2 a_j}.
\]
Now \(\sum_{i\ne j}a_i^2 a_j=\sum_i a_i^2\sum_{j\ne i}a_j
=\sum_i a_i^2(S_1 -a_i)=S_1 S_2-S_3\).
Therefore the sum equals
\((m-1)S_3-(S_1 S_2-S_3)=mS_3-S_1 S_2\).
\end{proof}

\medskip
\noindent\textbf{(A3)} \(\displaystyle \sum_{i<j}(a_i-a_j)^2(a_i+a_j)^2 = m S_4 - S_2^{2}\).

\begin{proof}
Use \((a_i-a_j)^2(a_i+a_j)^2=(a_i^2-a_j^2)^2=(a_i^2+a_j^2)^2-(2a_i a_j)^2\).
Thus
\[
\sum_{i<j}(a_i-a_j)^2(a_i+a_j)^2
=\underbrace{\sum_{i<j}(a_i^2+a_j^2)^2}_{\mathbf{I}}
-\underbrace{\sum_{i<j}4a_i^2 a_j^2}_{\mathbf{II}}.
\]
For \(\mathbf{I}\):
\[
\sum_{i<j}(a_i^2+a_j^2)^2
=\sum_{i<j}(a_i^4+a_j^4)+2\sum_{i<j}a_i^2 a_j^2
=(m-1)S_4+\left(S_2^2-S_4\right)=(m-2)S_4+S_2^2.
\]
For \(\mathbf{II}\): since \(S_2^2=S_4+2\sum_{i<j}a_i^2 a_j^2\),
we have \(\sum_{i<j}4a_i^2 a_j^2=2(S_2^2-S_4)\).
Subtracting gives \(mS_4-S_2^2\).
\end{proof}

\medskip
\noindent\textbf{(A4)} \(\displaystyle
\sum_{i<j}(a_i-a_j)^2\sum_{k\ne i,j} a_k^2
= (m-1)S_2^{2} - S_1^2 S_2 - m S_4 + 2 S_1 S_3.
\)

\begin{proof}
Note \(\sum_{k\ne i,j}a_k^2=S_2-a_i^2-a_j^2\). Hence
\[
\sum_{i<j}(a_i-a_j)^2 \sum_{k\ne i,j}a_k^2
=S_2 \sum_{i<j}(a_i-a_j)^2
-\sum_{i<j}(a_i-a_j)^2(a_i^2+a_j^2).
\]
The first term is \(S_2(mS_2-S_1^2)\) by (A1). For the second term, use
\[
(a_i-a_j)^2(a_i^2+a_j^2)=(a_i^2+a_j^2)^2-2a_i a_j(a_i^2+a_j^2),
\]
so
\[
\sum_{i<j}(a_i-a_j)^2(a_i^2+a_j^2)
=(m-2)S_4+S_2^2-\underbrace{\sum_{i<j}2(a_i^3 a_j+a_i a_j^3)}_{=2\sum_{i\ne j}a_i^3 a_j}.
\]
But \(\sum_{i\ne j}a_i^3 a_j=\sum_i a_i^3\sum_{j\ne i}a_j
=\sum_i a_i^3(S_1 -a_i)=S_1 S_3-S_4\). Therefore the second term equals
\((m-2)S_4+S_2^2-2(S_1 S_3-S_4)=mS_4+S_2^2-2S_1 S_3\).
Subtracting from the first term gives
\[
(mS_2^2-S_1^2S_2)-\left(mS_4+S_2^2-2S_1 S_3\right)
=(m-1)S_2^2-S_1^2S_2-mS_4+2S_1 S_3.
\qedhere\]
\end{proof}

\medskip
\noindent\textbf{(A5)} \(\displaystyle
\sum_{i<j}(a_i-a_j)^2 \left(\sum_{k\ne i,j} a_k\right)^{2}
= m S_4 - S_2^{2} + (m+2) S_1^2 S_2 - S_1^4 - 2m S_1 S_3.
\)

\begin{proof}
Let \(T_{ij}:=\sum_{k\ne i,j}a_k=S_1 -a_i-a_j\). Then
\[
\sum_{i<j}(a_i-a_j)^2 T_{ij}^2
=S_1^2 \sum_{i<j}(a_i-a_j)^2
-2S_1 \sum_{i<j}(a_i-a_j)^2(a_i+a_j)
+\sum_{i<j}(a_i-a_j)^2(a_i+a_j)^2.
\]
Apply (A1)--(A3) to the three sums:
\[
S_1^2(mS_2-S_1^2)-2S_1(mS_3-S_1 S_2)+(mS_4-S_2^2)
= mS_4-S_2^2+(m+2)S_1^2 S_2-S_1^4-2m S_1 S_3.
\qedhere\]
\end{proof}

We now proceed to establish the sum-of-squares identity~\eqref{eq:sos-identity}. In terms of $m$, the identity \eqref{eq:sos-identity} reads
\begin{equation}\label{eq:sos-identity-m}
\frac{m^2-m+1}{m(m+1)}(S_2+S_1^2)^2-(S_4+S_1^4)
=\frac{2}{(m+1)m(m-2)}\sum_{i<j}(a_i-a_j)^2
\left[Q_{ij}^2+\frac{m}{2}\,R_{ij}\right],
\end{equation}
where
\[
Q_{ij}:=a_i+a_j+(1-m)S_1,\qquad
R_{ij}:=(m-2) \sum_{k\ne i,j} a_k^2-\left(\sum_{k\ne i,j}a_k\right)^{2}.
\]
Set
\[
\Sigma_Q:=\sum_{i<j}(a_i-a_j)^2 Q_{ij}^2,\qquad
\Sigma_R:=\sum_{i<j}(a_i-a_j)^2 R_{ij}.
\]
Expanding $Q_{ij}^2$ and invoking (A1)--(A3) yields
\begin{align}
\Sigma_Q
&=\sum_{i<j}(a_i-a_j)^2(a_i+a_j)^2
+2(1-m)S_1\sum_{i<j}(a_i-a_j)^2(a_i+a_j)
+(1-m)^2S_1^2\sum_{i<j}(a_i-a_j)^2 \notag\\
&=(mS_4-S_2^2)+2(1-m)S_1 (mS_3-S_1S_2)+(1-m)^2S_1^2 (mS_2-S_1^2) \notag\\
&=mS_4-S_2^2-2m(m-1)S_1 S_3+(m^2-m+2)(m-1)S_1^2 S_2-(m-1)^2S_1^4 .
\label{eq:SigmaQ}
\end{align}
For the $R_{ij}$--part, using its definition together with (A4)--(A5) gives
\begin{align}
\Sigma_R
&=(m-2)\sum_{i<j}(a_i-a_j)^2\sum_{k\ne i,j}a_k^2
-\sum_{i<j}(a_i-a_j)^2\left(\sum_{k\ne i,j}a_k\right)^2 \notag\\
&=(m-2)\left[(m-1)S_2^2-S_1^2S_2-mS_4+2S_1 S_3\right]
-\left[mS_4-S_2^2+(m+2)S_1^2S_2-S_1^4-2mS_1 S_3\right] \notag\\
&=(m^2-3m+3)S_2^2-(m^2-m)S_4-2mS_1^2S_2+4(m-1)S_1 S_3+S_1^4 .
\label{eq:SigmaR}
\end{align}
Therefore the right-hand side of \eqref{eq:sos-identity-m} equals
\[
\frac{2}{(m+1)m(m-2)} \Sigma_Q
+\frac{m}{(m+1)m(m-2)} \Sigma_R
=\frac{1}{(m+1)m(m-2)}\left(2\Sigma_Q+m\Sigma_R\right).
\]
Substituting \eqref{eq:SigmaQ}--\eqref{eq:SigmaR} and collecting like terms, we obtain
\begin{align*}
2\Sigma_Q+m\Sigma_R
&=\left[2m-(m^3-m^2) \right]S_4
+\left[-2+(m^3-3m^2+3m) \right]S_2^2 +\left[2\left((m-1)^3-1\right) \right]S_1^2 S_2 \\
&\qquad
+\left[-4m(m-1)+4m(m-1) \right]S_1 S_3
+\left[-2(m-1)^2+m \right]S_1^4 \\
&= -m(m-2)(m+1) S_4
+\left(m^3-3m^2+3m-2\right) S_2^2
+2\left(m^3-3m^2+3m-2\right) S_1^2 S_2 \\
&\qquad+\left(-2m^2+5m-2\right) S_1^4,
\end{align*}
where the $S_1 S_3$--terms cancel exactly. Dividing by $(m+1)m(m-2)$ gives
\[
\frac{2\Sigma_Q+m\Sigma_R}{(m+1)m(m-2)}
= -S_4
+\frac{m^2-m+1}{m(m+1)} S_2^2
+\frac{2(m^2-m+1)}{m(m+1)} S_1^2 S_2
+\frac{-2m+1}{m(m+1)} S_1^4.
\]
Finally,
\[
\frac{m^2-m+1}{m(m+1)}(S_2+S_1^2)^2-(S_4+S_1^4)
= -S_4+\frac{m^2-m+1}{m(m+1)}\left(S_2^2+2S_1^2S_2+S_1^4\right)-S_1^4,
\]
whose right-hand side is exactly the expression we just obtained. This proves
\eqref{eq:sos-identity-m}, hence~\eqref{eq:sos-identity}. \qed

\section{Proof of the identities~\eqref{eq:reciprocal-1} and~\eqref{eq:reciprocal-2}}\label{sec:appendix_reciprocal}

In this section we derive the identities
\begin{equation*}
  \sum_{k=1}^{n} \frac{1}{w_k}
  = 2 \sum_{j=1}^{n} \frac{1}{z_j},
  \qquad
  \sum_{k=1}^{n} \frac{1}{w_k^{2}}
  = 3 \sum_{j=1}^{n} \frac{1}{z_j^{2}}
  + \left( \sum_{j=1}^{n} \frac{1}{z_j} \right)^{2},
\end{equation*}
where
\[
  P(z) = z \prod_{j=1}^{n} (z - z_j), \qquad z_j \in \C \setminus \{0\},
\]
and $w_1,\dots,w_n$ are the zeros of $P'(z)$.

Set
\[
  Q(z) := \prod_{j=1}^{n} (z - z_j),
\]
so that $P(z) = z Q(z)$.
Then
\[
  P'(z) = Q(z) + z Q'(z), \qquad
  P''(z) = 2 Q'(z) + z Q''(z), \qquad
  P^{(3)}(z) = 3 Q''(z) + z Q^{(3)}(z).
\]
Evaluating at $z=0$ gives
\begin{equation}\label{eq:P-derivatives-at-0}
  P'(0) = Q(0), \qquad
  P''(0) = 2 Q'(0), \qquad
  P^{(3)}(0) = 3 Q''(0).
\end{equation}On the other hand, the logarithmic derivative of $Q$ is
\[
  \frac{Q'(z)}{Q(z)} = \sum_{j=1}^{n} \frac{1}{z - z_j}.
\]
Evaluating at $z=0$ yields
\begin{equation}\label{eq:Qprime-over-Q}
  \frac{Q'(0)}{Q(0)} = \sum_{j=1}^{n} \frac{1}{0 - z_j}
  = - \sum_{j=1}^{n} \frac{1}{z_j}.
\end{equation}
Differentiating the logarithmic derivative, we obtain
\[
  \left( \frac{Q'}{Q} \right)'(z)
  = \frac{Q''(z)}{Q(z)} - \left( \frac{Q'(z)}{Q(z)} \right)^{2}
  = - \sum_{j=1}^{n} \frac{1}{(z - z_j)^2}.
\]
Setting $z=0$ gives
\begin{equation}\label{eq:Qdouble-over-Q}
  \frac{Q''(0)}{Q(0)} - \left( \frac{Q'(0)}{Q(0)} \right)^2
  = - \sum_{j=1}^{n} \frac{1}{z_j^2},
\end{equation}
and combining \eqref{eq:Qprime-over-Q} and \eqref{eq:Qdouble-over-Q} yields
\begin{equation}\label{eq:Qdouble}
  \frac{Q''(0)}{Q(0)}
  = \left( \sum_{j=1}^{n} \frac{1}{z_j} \right)^2
    - \sum_{j=1}^{n} \frac{1}{z_j^2}.
\end{equation}

Next, since $P'(z)$ has zeros $w_1,\dots,w_n$, we have
\[
  P'(z) = (n+1)\prod_{k=1}^{n} (z - w_k),
\]
so that
\[
  \frac{P''(z)}{P'(z)} = \sum_{k=1}^{n} \frac{1}{z - w_k}.
\]
Evaluating at $z=0$ gives
\begin{equation}\label{eq:sum-1-over-w-basic}
  \frac{P''(0)}{P'(0)} = \sum_{k=1}^{n} \frac{1}{0 - w_k}
  = - \sum_{k=1}^{n} \frac{1}{w_k}.
\end{equation}
Differentiating $\frac{P''}{P'}$ yields
\[
  \left( \frac{P''}{P'} \right)'(z)
  = \frac{P^{(3)}(z)}{P'(z)} - \left( \frac{P''(z)}{P'(z)} \right)^2
  = - \sum_{k=1}^{n} \frac{1}{(z - w_k)^2}.
\]
Setting $z=0$ we obtain
\begin{equation}\label{eq:sum-1-over-w2-basic}
  \frac{P^{(3)}(0)}{P'(0)} - \left( \frac{P''(0)}{P'(0)} \right)^2
  = - \sum_{k=1}^{n} \frac{1}{w_k^2},
\end{equation}
or equivalently
\begin{equation}\label{eq:sum-1-over-w2}
  \sum_{k=1}^{n} \frac{1}{w_k^2}
  = \left( \frac{P''(0)}{P'(0)} \right)^2 - \frac{P^{(3)}(0)}{P'(0)}.
\end{equation}

We now substitute \eqref{eq:P-derivatives-at-0}, \eqref{eq:Qprime-over-Q}, and \eqref{eq:Qdouble}
into \eqref{eq:sum-1-over-w-basic} and \eqref{eq:sum-1-over-w2}.
From \eqref{eq:P-derivatives-at-0} and \eqref{eq:Qprime-over-Q} we obtain
\[
  \frac{P''(0)}{P'(0)}
  = \frac{2 Q'(0)}{Q(0)}
  = -2 \sum_{j=1}^{n} \frac{1}{z_j},
\]
and hence, by \eqref{eq:sum-1-over-w-basic},
\[
  \sum_{k=1}^{n} \frac{1}{w_k}
  = 2 \sum_{j=1}^{n} \frac{1}{z_j},
\]
which is \eqref{eq:reciprocal-1}.

Similarly, from \eqref{eq:P-derivatives-at-0} and \eqref{eq:Qdouble} we get
\[
  \frac{P^{(3)}(0)}{P'(0)}
  = \frac{3Q''(0)}{Q(0)}
  = 3 \left( \left( \sum_{j=1}^{n} \frac{1}{z_j} \right)^2
            - \sum_{j=1}^{n} \frac{1}{z_j^2} \right).
\]
Substituting this and $\frac{P''(0)}{P'(0)} = -2 \sum_{j=1}^{n} \frac{1}{z_j}$ into
\eqref{eq:sum-1-over-w2}, we obtain
\begin{align*}
  \sum_{k=1}^{n} \frac{1}{w_k^2}
  &= \left( -2 \sum_{j=1}^{n} \frac{1}{z_j} \right)^2
     - 3 \left( \left( \sum_{j=1}^{n} \frac{1}{z_j} \right)^2
               - \sum_{j=1}^{n} \frac{1}{z_j^2} \right) \\
  &= \left( \sum_{j=1}^{n} \frac{1}{z_j} \right)^2
     + 3 \sum_{j=1}^{n} \frac{1}{z_j^2},
\end{align*}
which is precisely \eqref{eq:reciprocal-2}.\qed

\section{Verification of Conjecture~\ref{conj: sum z_j=0} in the case $p=6$}\label{sec:appendix_case_p_6}

In this section we prove that Conjecture~\ref{conj: sum z_j=0} holds in the case $p = 6$.
We begin by recalling the following Schoenberg type inequality of order~$6$, established in~\cite[Theorem~2.4]{Tang25}.

\begin{thm}[\cite{Tang25}]\label{thm:inequality_of_order_6_1}
Let \( z_1, z_2, \ldots, z_n \) be the zeros of \( p(z) \), and let \( w_1, w_2, \ldots, w_{n-1} \) be its critical points. Assume further that $\sum_{j=1}^n z_j=0$. Then 
\begin{align*}
\sum_{k=1}^{n-1} |w_k|^6 \leq\ 
&\frac{n - 6}{n} \sum_{j=1}^n |z_j|^6
+ \frac{6}{n^2} \left( \sum_{j=1}^n |z_j|^4 \right) \left( \sum_{j=1}^n |z_j|^2 \right) \\
&+ \frac{3}{n^2} \left| \sum_{j=1}^n z_j |z_j|^2 \right|^2
- \frac{2}{n^3} \left( \sum_{j=1}^n |z_j|^2 \right)^3,
\end{align*}
with equality if and only if all \( z_j \) lie on a straight line in the complex plane.
\end{thm}

We also need the following auxiliary inequality for the configuration of zeros.

\begin{thm}\label{thm:inequality_of_order_6_22}
Let $n\ge 1$ and let $z_1,\dots,z_n\in\mathbb{C}$ satisfy $\sum_{j=1}^n z_j = 0$. Then the following inequality holds:
\begin{equation}\label{eq:main-ineq}
\frac{4}{n} \sum_{j=1}^n |z_j|^6
- \frac{6}{n^2} \left( \sum_{j=1}^n |z_j|^4 \right) \left( \sum_{j=1}^n |z_j|^2 \right)
- \frac{3}{n^2} \left| \sum_{j=1}^n z_j |z_j|^2 \right|^2
+ \frac{2}{n^3} \left( \sum_{j=1}^n |z_j|^2 \right)^3
\ge 0.
\end{equation}
\end{thm}

\begin{proof}
We first rewrite the inequality in a probabilistic form. Let $\Omega=\{1,\dots,n\}$ and equip $\Omega$ with the probability measure
\[
\mathbb{P}(\{j\}) = \frac{1}{n}\qquad (j=1,\dots,n).
\]
Define a complex-valued random variable $Z$ on $\Omega$ by $Z(j) = z_j$. Then
\[
\mathbb{E}Z = \frac{1}{n}\sum_{j=1}^n z_j = 0
\]
by the hypothesis $\sum_{j=1}^n z_j = 0$.


Introduce the moments $m_2 := \mathbb{E}|Z|^2$, $m_4 := \mathbb{E}|Z|^4$, $m_6 := \mathbb{E}|Z|^6$ and $u := \mathbb{E}\left(Z|Z|^2\right)$. By definition of $Z$, we have
\[
m_2 = \frac{\sum_{j=1}^n |z_j|^2}{n},\qquad
m_4 = \frac{\sum_{j=1}^n |z_j|^4}{n},\qquad
m_6 = \frac{\sum_{j=1}^n |z_j|^6}{n},\qquad
u = \frac{\sum_{j=1}^n z_j |z_j|^2}{n}.
\]Substituting these into \eqref{eq:main-ineq} and multiplying by $n$ gives the equivalent form
\[
4 m_6
- 6 m_4 m_2
- 3 |u|^2
+ 2 m_2^3
\ge 0.
\]
For convenience, set
\[
F_0 := 4 m_6 - 6 m_4 m_2 - 3 |u|^2 + 2 m_2^3.
\]
Thus it suffices to prove
\[
F_0  \ge  0.
\]

If $S_2=0$, then $m_2=0$ and hence $|z_j|=0$ for all $j$, so all quantities in
\eqref{eq:main-ineq} vanish and the inequality holds with equality.
Therefore we may and do assume $S_2>0$, i.e. $m_2>0$.

Consider the Hilbert space $L^2(\Omega,\mathbb{P})$ with the usual inner product
\[
\langle f,g\rangle = \mathbb{E}\bigl(f\,\overline{g}\bigr).
\]
Define the three functions
\[
f_0 := 1,\qquad f_1 := Z,\qquad f_2 := Z|Z|^2.
\]We compute:
\begin{align*}
\langle f_0,f_0\rangle &= \mathbb{E}(1\cdot\overline{1}) = 1,\\
\langle f_0,f_1\rangle &= \mathbb{E}(1\cdot\overline{Z})
= \mathbb{E}(\overline{Z}) = \overline{\mathbb{E}Z} = 0,\\
\langle f_0,f_2\rangle
&= \mathbb{E}\bigl(1\cdot\overline{Z|Z|^2}\bigr)
= \mathbb{E}\bigl(\overline{Z}|Z|^2\bigr)
= \overline{\mathbb{E}\bigl(Z|Z|^2\bigr)} = \overline{u},\\
\langle f_1,f_1\rangle &= \mathbb{E}(Z\overline{Z}) = \mathbb{E}|Z|^2 = m_2,\\
\langle f_1,f_2\rangle
&= \mathbb{E}\bigl(Z\,\overline{Z|Z|^2}\bigr)
= \mathbb{E}\bigl(|Z|^4\bigr) = m_4,\\[2pt]
\langle f_2,f_2\rangle
&= \mathbb{E}\bigl(Z|Z|^2\,\overline{Z|Z|^2}\bigr)
= \mathbb{E}|Z|^6 = m_6.
\end{align*}
Hence the Gram matrix $G = ( \langle f_i,f_j\rangle )_{i,j=0}^2$ is
\[
G =
\begin{pmatrix}
\langle f_0,f_0\rangle & \langle f_0,f_1\rangle & \langle f_0,f_2\rangle\\
\langle f_1,f_0\rangle & \langle f_1,f_1\rangle & \langle f_1,f_2\rangle\\
\langle f_2,f_0\rangle & \langle f_2,f_1\rangle & \langle f_2,f_2\rangle
\end{pmatrix} =
\begin{pmatrix}
1 & 0 & \overline{u}\\
0 & m_2 & m_4\\
u & m_4 & m_6
\end{pmatrix}.
\]A direct computation of the determinant gives
\[
\det G
= m_2 m_6 - m_4^2 - m_2 |u|^2.
\]Since $G$ is a Gram matrix in a Hilbert space, it is positive semidefinite. In particular, its determinant is nonnegative:
\begin{equation}\label{eq:J-ge0}
J_0 := \det G = m_2 m_6 - m_4^2 - m_2 |u|^2 \ge 0.
\end{equation}

We now relate $F_0$ and $J_0$ via an algebraic identity. Since $m_2>0$, we may consider $\frac{3}{m_2}J_0 = 3m_6 - \frac{3}{m_2}m_4^2 - 3|u|^2$. Subtracting this from $F_0$ yields
\begin{equation}\label{eq:B2-def}
B_0 := F_0 - \frac{3}{m_2}J_0
= m_6 - 6m_2m_4 + 2m_2^3 + \frac{3}{m_2}m_4^2.
\end{equation}Thus we can write
\[
F_0 = \frac{3}{m_2}J_0 + B_2.
\]
By \eqref{eq:J-ge0} and $m_2>0$, we already know that $\frac{3}{m_2}J_0\ge 0$. Thus it remains to show that
\[
B_0 \ge 0.
\]

Observe that $m_2,m_4,m_6$ depend only on the nonnegative random variable
\[
X := |Z|^2 \ge 0.
\]
Indeed,
\[
m_2 = \mathbb{E}X,\qquad
m_4 = \mathbb{E}X^2,\qquad
m_6 = \mathbb{E}X^3.
\]The function $\varphi(x)=x^3$ is convex on $[0,\infty)$, so Jensen's inequality implies
\begin{equation}\label{eq:Jensen}
m_6 = \mathbb{E}X^3 \ge \bigl(\mathbb{E}X\bigr)^3 = m_2^3.
\end{equation}
Using \eqref{eq:Jensen} in \eqref{eq:B2-def}, we obtain
\[
B_0
= m_6 - 6m_2m_4 + 2m_2^3 + \frac{3}{m_2}m_4^2
\ge
m_2^3 - 6m_2m_4 + 2m_2^3 + \frac{3}{m_2}m_4^2 = \frac{3}{m_2}\left(m_2^2 - m_4\right)^2.
\]
Hence $B_0 \ge 0$. Therefore $F_0 \ge 0$. Recalling that $F_0\ge 0$ is equivalent to \eqref{eq:main-ineq}, this completes the proof.
\end{proof}

Combining Theorems~\ref{thm:inequality_of_order_6_1} and~\ref{thm:inequality_of_order_6_22}, we obtain the following corollary, which confirms Conjecture~\ref{conj: sum z_j=0} in the case $p = 6$.

\begin{cor}\label{cor:inequality_of_order_6_11}
Let \( z_1, z_2, \ldots, z_n \) be the zeros of \( p(z) \), and let \( w_1, w_2, \ldots, w_{n-1} \) be its critical points. Assume further that $\sum_{j=1}^n z_j=0$. Then for all $n\ge 4$, \[ \sum_{k=1}^{n-1} |w_k|^6 \le \frac{n-2}{n} \sum_{j=1}^n |z_j|^6 . \]
\end{cor}

\section{Proof of Theorem~\ref{thm:refined-interpolation}}
\label{appendix:refined-interpolation}

\begin{lem}\label{lem:Te-2m-endpoint}
Let $n\geq 2$ and let
\[
\widetilde{T} : \C^n \to \C^{n\times n}, \qquad \widetilde{T}(u) := \diag(u) + u\mathbf{1}^\top,
\]For every integer $m\geq 1$ and every $u=(u_1,\dots,u_n)^\top\in\C^n$,  we have
\begin{equation}\label{eq:Te-2m-pointwise}
  \|\widetilde{T}(u)\|_{S_{2m}}^{2m}
  \leq
  C_m(n)\sum_{j=1}^n |u_j|^{2m},
\end{equation}
where
\[
  C_m(n) := \frac{(n+1)^{2m}+n-1}{n}.
\]Moreover, equality holds in \eqref{eq:Te-2m-pointwise} whenever $|u_1|=\cdots=|u_n|$. In particular,
\[
  \|\widetilde{T}\|_{\ell^{2m}(\mathbb{C}^n)\to S_{2m}} = C_m(n)^{\frac{1}{2m}}.
\]
\end{lem}

\begin{proof}
Fix $m\geq 1$ and $u=(u_1,\dots,u_n)^\top\in\C^n$. Set $F := \diag(d_1,\dots,d_n)$, $K := u u^*$, $\alpha := n+2$ and write
\[
  A_r := \sum_{j=1}^n |u_j|^{2r}, \qquad r\geq 1.
\]
By \eqref{eq:Thm_6.1_BBstar_11}, we have
\[
  \widetilde{T}(u)\widetilde{T}(u)^\ast
  = \diag(u) \diag(u)^* + \diag(u)\mathbf{1}u^*
    + u\mathbf{1}^\top\diag(u)^* + u\mathbf{1}^\top\mathbf{1}u^*
  = F + \alpha K.
\]
Hence
\[
  \|\widetilde{T}(u)\|_{S_{2m}}^{2m}
  = \tr\left( |\widetilde{T}(u)|^{2m} \right)
  = \tr\left( (\widetilde{T}(u)\widetilde{T}(u)^\ast)^m \right)
  = \tr\left( (F+\alpha K)^m \right).
\]We now expand $(F+\alpha K)^m$ using the rank-one structure of $K$, similarly to the proof of \eqref{eq:will_be_used_in_app_D1}, and obtain
\begin{equation}\label{eq:Te-trace-Sr-formula}
  \|\widetilde{T}(u)\|_{S_{2m}}^{2m}
  = A_m +
    \sum_{\ell=1}^m \alpha^\ell
    \sum_{\substack{r_0,\dots,r_\ell\geq 0\\ r_0+\cdots+r_\ell = m-\ell}}
      A_{r_0+r_\ell+1} \cdot \prod_{j=1}^{\ell-1} A_{r_j+1}.
\end{equation}
The argument in the proof of Corollary~\ref{cor:sharp-2m} (based only on H\"{o}lder's inequality and elementary
estimates for power sums of positive numbers) shows that
\[
  \sum_{\substack{r_0,\dots,r_\ell\geq 0\\ r_0+\cdots+r_\ell = m-\ell}}
      A_{r_0+r_\ell+1}\cdot\prod_{j=1}^{\ell-1} A_{r_j+1}
  \leq
  \binom{m}{\ell} n^{\ell-1} A_m
  \qquad (1\leq \ell\leq m).
\]
Combining this with \eqref{eq:Te-trace-Sr-formula} and using $\alpha=n+2$ we obtain
\[
  \|\widetilde{T}(u)\|_{S_{2m}}^{2m}
  \leq A_m\left(
    1 + \sum_{\ell=1}^m
      \binom{m}{\ell} (n+2)^\ell n^{\ell-1}
  \right).
\]
The binomial theorem (as in the proof of Corollary~\ref{cor:sharp-2m}) gives
\[
  1 + \sum_{\ell=1}^m
      \binom{m}{\ell} (n+2)^\ell n^{\ell-1}
  = \frac{(n+1)^{2m}+n-1}{n}
  = C_m(n),
\]
and therefore
\[
  \|\widetilde{T}(u)\|_{S_{2m}}^{2m}
  \leq C_m(n)\,A_m
  = C_m(n)\sum_{j=1}^n |u_j|^{2m}.
\]
This proves \eqref{eq:Te-2m-pointwise}. 

To see that the constant is sharp, take any $u$ with $|u_1|=\cdots=|u_n|=r>0$.
Then $F = r^2 I$ and $K = u u^\ast$ has rank one with
$\|u\|_{\ell^2}^2 = nr^2$. Hence
\[
  \widetilde{T}(u)\widetilde{T}(u)^\ast = F + (n+2)K
  = r^2\bigl( I_n + (n+2)\tfrac{1}{n} \mathbf{1}\mathbf{1}^\ast\bigr),
\]
so $\widetilde{T}(u)\widetilde{T}(u)^\ast$ is unitarily diagonalizable with one eigenvalue
$r^2(1+(n+2)n)$ and $n-1$ eigenvalues equal to $r^2$. Consequently,
\[
  \|\widetilde{T}(u)\|_{S_{2m}}^{2m}
  = (n-1)r^{2m} + \left((n+1)^{2m}\right)r^{2m}
  = \left((n+1)^{2m}+n-1\right)r^{2m},
\]
while
\[
  \sum_{j=1}^n |u_j|^{2m} = n r^{2m}.
\]
Thus
\[
  \frac{\|\widetilde{T}(u)\|_{S_{2m}}^{2m}}
       {\sum_{j=1}^n |u_j|^{2m}}
  = \frac{(n+1)^{2m}+n-1}{n}
  = C_m(n),
\]
showing that equality is attained in \eqref{eq:Te-2m-pointwise}. Taking the supremum over $u\neq 0$ then gives
$\|\widetilde{T}\|_{\ell^{2m}(\mathbb{C}^n)\to S_{2m}} = C_m(n)^{1/(2m)}$, this completes the proof.
\end{proof}

We are now ready to present the following.
\begin{proof}[Proof of Theorem~\ref{thm:refined-interpolation}]

As in the proof of Theorem~\ref{thm:negative-interpolation_for_p_2_infinity}, set $u_j := 1/z_j$, $u := (u_1,\dots,u_n)^\top$, and define the linear map
	\begin{equation}\label{eq:definition_of_T_tilde_33}
	\widetilde{T} : \mathbb{C}^n \to \mathbb{C}^{n\times n},\qquad
	\widetilde{T}(u) := \diag(u) + u\mathbf{1}^\top.
\end{equation}
By~\eqref{eq:eig-vs-sing-B}, we have
	\begin{equation}\label{eq:refined-Weyl}
		\sum_{k=1}^n \frac{1}{|w_k|^{p}}
		= \sum_{k=1}^n \bigl|\lambda_k(B)\bigr|^{p}
		\le
		\|B\|_{S_p}^{p}
		= \|\widetilde{T}(u)\|_{S_p}^{p},
		\qquad p\ge1.
	\end{equation} We now estimate $\|\widetilde{T}(u)\|_{S_p}$ in terms of $\|u\|_{\ell^p}$ for $p\geq 2$
by complex interpolation between the sharp even endpoints provided by
Lemma~\ref{lem:Te-2m-endpoint}. Fix $p\geq 2$. Choose the
integer $m:=\lfloor \frac{p}{2}\rfloor$ so that $2m\leq p\leq 2m+2$, and set
\[
  \alpha := \frac{p}{2}-m \in [0,1].
\]
Then there exists $\theta\in[0,1]$
such that
\[
  \frac{1}{p} = \frac{1-\theta}{2m} + \frac{\theta}{2m+2}.
\]
The families $(\ell^r(\mathbb{C}^n))_{1\leq r\leq\infty}$ and $(S_r)_{1\leq r\leq\infty}$
form compatible complex interpolation scales; see, for example,
Theorem \ref{thm:RT_interpolation1} and Remark \ref{rmk:RT_interpolation1}. Therefore,
\[
  \left(\ell^{2m}(\mathbb{C}^n),\ell^{2m+2}(\mathbb{C}^n)\right)_{[\theta]} = \ell^p(\mathbb{C}^n),
  \qquad
  \left(S_{2m},S_{2m+2}\right)_{[\theta]} = S_p \quad \text{(with equal norms)}.
\]
The abstract interpolation theorem (see Theorem~\ref{thm:complex-interpolation}) then yields
\[
  \|\widetilde{T}\|_{\ell^p(\mathbb{C}^n)\to S_p}
  \leq
  \|\widetilde{T}\|_{\ell^{2m}(\mathbb{C}^n)\to S_{2m}}^{1-\theta}
  \|\widetilde{T}\|_{\ell^{2m+2}(\mathbb{C}^n)\to S_{2m+2}}^{\theta}.
\]
Using Lemma~\ref{lem:Te-2m-endpoint} at $m$ and $m+1$, we obtain
\[
  \|\widetilde{T}\|_{\ell^p(\mathbb{C}^n)\to S_p}
  \leq
  C_m(n)^{\frac{1-\theta}{2m}}\,
  C_{m+1}(n)^{\frac{\theta}{2m+2}},
\]
and hence
\begin{equation}\label{eq:refined-op-norm-p}
  \|\widetilde{T}(u)\|_{S_p}^p
  \leq
  C_m(n)^{\frac{p(1-\theta)}{2m}}
  C_{m+1}(n)^{\frac{p\theta}{2m+2}}
  \|u\|_{\ell^p}^p.
\end{equation}A straightforward simplification shows that the exponents of $C_m(n)$ and
$C_{m+1}(n)$ in \eqref{eq:refined-op-norm-p} can be written in the
form
\[
  \frac{p(1-\theta)}{2m} = m+1-\frac{p}{2} = 1-\alpha,
  \qquad
  \frac{p\theta}{2m+2} = \frac{p}{2}-m = \alpha.
\]
Thus \eqref{eq:refined-op-norm-p} becomes
\[
  \|\widetilde{T}(u)\|_{S_p}^p
  \leq
  C_m(n)^{1-\alpha} C_{m+1}(n)^\alpha
  \|u\|_{\ell^p}^p.
\]
Since $u_j=1/z_j$, we have $\|u\|_{\ell^p}^p = \sum_{j=1}^n |u_j|^p
= \sum_{j=1}^n |z_j|^{-p}$, and combining this with
\eqref{eq:refined-Weyl} yields
\[
  \sum_{k=1}^n \frac{1}{|w_k|^p}
  \leq
  C_m(n)^{1-\alpha}\,C_{m+1}(n)^\alpha
  \sum_{j=1}^n \frac{1}{|z_j|^p},
\]
which is precisely \eqref{eq:refined-interpolation} with
\[
  C(n,p) = C_m(n)^{1-\alpha} C_{m+1}(n)^\alpha.
\]This completes the proof.
\end{proof}


\begin{thebibliography}{99}


\bibitem{Barvinok25}
A.~Barvinok, \textit{A Course in Convexity}, Graduate Studies in Mathematics, Vol.~54, American Mathematical Society, Providence, RI, 2025.


\bibitem{Bergh76}
J.~Bergh and J.~L\"ofstr\"om, \emph{Interpolation Spaces: An Introduction}, Vol.~223, Springer, Berlin--Heidelberg, 1976.


\bibitem{Bha07}
B.~V.~R.~Bhat and M.~Mukherjee, Integrators of matrices, \emph{Linear Algebra Appl.} \textbf{426} (2007), 71--82.


\bibitem{Bhatia09}
R.~Bhatia, \textit{Positive Definite Matrices}, Princeton University Press, 2009.


\bibitem{Bhatia13}
R.~Bhatia, \textit{Matrix Analysis}, Vol.~169, Springer Science \& Business Media, 2013.


\bibitem{BE95}
P.~Borwein, T.~Erd\'elyi, \textit{Polynomials and Polynomial Inequalities}, Vol.~161, Springer Science \& Business Media, 1995.


\bibitem{BIS99}
M.~G.~de Bruin, K.~G.~Ivanov, A.~Sharma, A conjecture of Schoenberg, \textit{J. Inequal. Appl.} \textbf{4} (1999): 183--213.


\bibitem{dS99}
M.~G.~de Bruin, A.~Sharma, On a Schoenberg-type conjecture, in: \textit{Continued Fractions and Geometric Function Theory (CONFUN)}, Trondheim, 1997, \textit{J. Comput. Appl. Math.} \textbf{105} (1999): 221--228.


\bibitem{CN06}
W.~S.~Cheung, T.~W.~Ng, A companion matrix approach to the study of zeros and critical points of a polynomial, \textit{J. Math. Anal. Appl.} \textbf{319} (2006): 690--707.


\bibitem{DG20}
S.~Danielyan and A.~Guterman, On integral of polynomial with multiple roots, \emph{Zap. Nauchn. Sem. POMI} \textbf{482} (2019), 28--44; English transl., \emph{J. Math. Sci. (N.~Y.)} \textbf{249} (2020), no.~2, 128--138.


\bibitem{DGN21}
S.~V.~Danielyan, A.~E.~Guterman, T.~W.~Ng, Integrability of diagonalizable matrices and a dual Schoenberg type inequality, \textit{J. Math. Anal. Appl.} \textbf{498} (2021): 124909.


\bibitem{GGTW25}
B.~Georgiev, J.~G\'{o}mez-Serrano, T.~Tao, A.~Z.~Wagner, Mathematical exploration and discovery at scale. \textit{arXiv preprint}, arXiv:2511.02864v1, 2025.


\bibitem{GohbergKrein69}
I.~C.~Gohberg, M.~G.~Kre\u{\i}n, \textit{Introduction to the Theory of Linear Nonselfadjoint Operators}, American Mathematical Society, Providence, 1969.


\bibitem{Hay19}
W.~K.~Hayman, \emph{Research Problems in Function Theory}, University of London, London, 1967; The fiftieth anniversary edition, Springer Nature, 2019.


\bibitem{HJ91}
R.~A.~Horn, C.~R.~Johnson, \textit{Topics in Matrix Analysis}, Cambridge University Press, 1991.


\bibitem{HJ13}
R.~A.~Horn, C.~R.~Johnson, \textit{Matrix Analysis}, Cambridge University Press, 2nd ed., 2013.


\bibitem{Kat99}
E.~S.~Katsoprinakis,
On the complex role set of a polynomial,
in: N.~Papamichael, St.~Ruscheweyh and E.~B.~Saff (eds.),
\emph{Computational Methods and Function Theory 1997 (Nicosia)},
Series in Approximations and Decompositions, Vol.~11,
World Scientific, River Edge, NJ, 1999.


\bibitem{KR03}
 N.~L.~Komarova, I.~Rivin, Harmonic mean, random polynomials and stochastic matrices, \textit{Adv. Appl. Math.} \textbf{31} (2003): 501--526.

 
\bibitem{Kos84}
A.~Kostrikin, Conservative polynomials, in: \emph{Stud. Algebra}, Tbilisi, 1984, pp.~115--129.


\bibitem{KT16}
O.~Kushel, M.~Tyaglov, Circulants and critical points of polynomials, \textit{J. Math. Anal. Appl.} \textbf{439} (2016): 634--650.


\bibitem{LXZ21}
M.~Lin, M.~Xie, J.~Zhang, Remarks on circulant matrices and critical points of polynomials, \textit{J. Math. Anal. Appl.} \textbf{502} (2021): 125233.


\bibitem{Mal04}
S.~Malamud, Inverse spectral problem for normal matrices and the Gauss--Lucas theorem, \textit{Trans. Amer. Math. Soc.} \textbf{357} (2004): 4043--4064.


\bibitem{Mar66}
M.~Marden, \textit{Geometry of Polynomials}, 2nd ed., Mathematical Surveys, No.~3, American Mathematical Society, Providence, RI, 1966.


\bibitem{MOA11}
A.~W.~Marshall, I.~Olkin, B.~Arnold, \textit{Inequalities: Theory of Majorization and Its Applications}, 2nd ed., Springer, New York, 2011.


\bibitem{NocedalWright2006} J.~Nocedal and S.~J.~Wright, \textit{Numerical Optimization}, 2nd ed., Springer, New York, 2006.


\bibitem{Per03}
R.~Pereira, Differentiators and the geometry of polynomials, \textit{J. Math. Anal. Appl.} \textbf{285} (2003): 336--348.


\bibitem{Pisier03}
G.~Pisier, Q.~Xu, Non-commutative \(L^p\)-spaces, in: W.~B.~Johnson, J.~Lindenstrauss (Eds.), \textit{Handbook of the Geometry of Banach Spaces}, Vol.~2, Elsevier Science~B.V., 2003, pp.~1459--1517.


\bibitem{QS02}
Q.~I.~Rahman, G.~Schmeisser, \textit{Analytic Theory of Polynomials}, Oxford University Press, Oxford, 2002.


\bibitem{Rov12}
I.~Roventa, A note on Schur-concave functions, \textit{J. Inequal. Appl.} (2012): 159.


\bibitem{Sch77}
G.~Schmeisser,
On Ilieff's conjecture,
\emph{Math. Z.} \textbf{156} (1977), 165--173.


\bibitem{Sch86}
I.~J.~Schoenberg, A conjectured analogue of Rolle's theorem for polynomials with real or complex coefficients, \textit{Amer. Math. Monthly} \textbf{93} (1986): 8--13.


\bibitem{Simon05}
B.~Simon, \textit{Trace Ideals and Their Applications}, No.~120, American Mathematical Society, 2005.


\bibitem{Sma81}
S.~Smale,
The fundamental theorem of algebra and complexity theory,
\emph{Bull. Amer. Math. Soc. (N.\,S.)} \textbf{4} (1981), 1--36.


\bibitem{Tang25}
Q.~Tang, Schoenberg type inequalities, \textit{arXiv preprint}, arXiv:2504.09837v2, 2025.


\bibitem{Tao22}
T.~Tao, Sendov's conjecture for sufficiently high degree polynomials, \textit{Acta Math.} \textbf{229} (2022): 347--392.


\bibitem{Tri78}
H.~Triebel,
\emph{Interpolation Theory, Function Spaces, Differential Operators}, Deutscher Verlag Wissensch., Berlin, 1978.


\bibitem{Zh11}
F.~Zhang, \emph{Matrix Theory: Basic Results and Techniques},
2nd ed., Springer, New York, 2011.


\bibitem{Zha25}
T.~Zhang, A refinement of Pawlowski's result,
\emph{Proc. Amer. Math. Soc.}, to appear; arXiv:2411.07105, 2025.


\end{thebibliography}
\end{document}